 \renewcommand{\a}{\alpha}
\newcommand{\z}{\zeta}
\renewcommand{\(}{\left\(}
\renewcommand{\)}{\right\)}
\renewcommand{\[}{\left\[}
\renewcommand{\]}{\right\]}
\renewcommand{\i}{\infty}
\numberwithin{equation}{section}
 \theoremstyle{plain}
\newtheorem{theorem}{Theorem}[section]
\newtheorem{lemma}[theorem]{Lemma}
\newtheorem{remark}[]{Remark}
\newtheorem{corollary}[theorem]{Corollary}
\def\proof{\@ifnextchar[{\@oproof}{\@nproof}}
\def\@oproof[#1][#2]{\trivlist\item[\hskip\labelsep\textit{#2 Proof of\
#1.}~]\ignorespaces}
\def\@nproof{\trivlist\item[\hskip\labelsep\textit{Proof.}~]\ignorespaces}
\begin{document}

%\title{A $q$-series identity of Uchimura and its numerous generalizations}

%\title[A divisor generating $q$-series identity]{A divisor generating $q$-series identity and its applications in heap analysis and random graph theory}
\title[A divisor generating $q$-series identity]{A divisor generating $q$-series identity and its applications to probability theory and random graphs}

%\title{A divisor generating $q$-series identity of Uchimura and its numerous generalizations}

%\title{A divisor generating $q$-series arising from heap and random graph theory}

%\title{A Generalization of Uchimura's Identity via Gupta-Kumar's two parameter extension} 

\thanks{$2020$ \textit{Mathematics Subject Classification.} Primary 11P84,  33D15; Secondary  05C80,  60F99.\\
\textit{Keywords and phrases.} $q$-series,  partition identities,  generalized divisor functions,  probability distributions,  random graphs} 

\author{Archit Agarwal}
\address{Archit Agarwal, Department of Mathematics, Indian Institute of Technology Indore, Simrol, Indore 453552, Madhya Pradesh, India.}
\email{phd2001241002@iiti.ac.in, archit.agrw@gmail.com}

\author{Subhash Chand Bhoria}
\address{Subhash Chand Bhoria, Pt. Chiranji Lal Sharma Government PG College, Karnal, Urban Estate, Sector-14, Haryana 132001, India.}
\email{scbhoria89@gmail.com}

\author{Pramod Eyyunni}
\address{Pramod Eyyunni,  Department of Mathematics,  Birla Institute of Technology And Science Pilani,  Pilani Campus,  Pilani-333031, Rajasthan, India.}
\email{pramod.eyyunni@pilani.bits-pilani.ac.in,  pramodeyy@gmail.com}

\author{Bibekananda Maji}
\address{Bibekananda Maji, Department of Mathematics, Indian Institute of Technology Indore, Simrol, Indore 453552, Madhya Pradesh, India.}
\email{bibekanandamaji@iiti.ac.in, bibek10iitb@gmail.com}

\maketitle

\begin{abstract}
In I981,  Uchimura studied a divisor generating $q$-series that has applications in probability theory and in the analysis of data structures,  called \emph{heaps}.  
Mainly,  he proved the following identity.  For $|q|<1$,  
\begin{equation*}
\sum_{n=1}^\infty n q^n (q^{n+1})_\infty =\sum_{n=1}^{\infty} \frac{(-1)^{n-1} q^{\frac{n(n+1)}{2} } }{(1-q^n) ( q)_n  } = \sum_{n=1}^{\infty} \frac{ q^n }{1-q^n}.
\end{equation*}
Over the years,  this identity has been generalized by many mathematicians in different directions.  Uchimura himself in 1987,    Dilcher (1995),   Andrews-Crippa-Simon (1997), and recently Gupta-Kumar (2021) found a generalization of the aforementioned identity.  
Any generalization of the right most expression of the above identity,  we name as \emph{divisor-type} sum,  whereas a generalization of the middle expression we say  \emph{ Ramanujan-type} sum,  and any generalization of the left most expression we refer it as \emph{ Uchimura-type} sum.  Quite surprisingly,  Simon,  Crippa and Collenberg (1993) showed that the same divisor generating function has a connection with random acyclic digraphs.   
 One of the main themes of this paper is to study these different generalizations and present a unified theory.   We also discuss applications of these generalized identities in probability theory for the analysis of heaps and random acyclic digraphs.  
\end{abstract}

\tableofcontents

\section{Introduction}
 Ramanujan recorded five interesting $q$-series identities in the unorganized portion of his second notebook \cite[pp.~354--355]{ramanujanoriginalnotebook2}, \cite[pp. 302--303]{ramanujantifr}.  These identities were systematically proved by Berndt \cite[pp. 262--265]{bcbramforthnote}.  Dixit and the fourth author \cite[Theorem 2.1]{DM} obtained a $q$-series identity from which they were able to derive three of these five identities of Ramaujan.  Recently,  the last three authors \cite{BEM21}  established a unified generalization of all of these five identities.  Here we mention one of these five $q$-series identities of Ramanujan,  namely,  
for $  c \in \mathbb{C}, c q^n \neq 1, \ n \geq 1$,   
\begin{equation}\label{entry4}
\sum_{n=1}^{\infty} \frac{(-1)^{n-1} c^n q^{\frac{n(n+1)}{2} } }{(1-q^n) (c q)_n  } = \sum_{n=1}^{\infty} \frac{c^n q^n }{1-q^n}.
\end{equation}
Throughout this paper, we assume $q$ to be a complex number with $|q|<1$, and the notations used above are as follows,
\begin{align*}
&(a)_0=(a;q)_0=1,\\ &(a)_n=(a;q)_n=(1-a)(1-aq)\cdots(1-aq^{n-1}),~n\geq 1,\\ &(a)_\infty=(a;q)_\infty=\lim_{n\rightarrow \infty}(a;q)_n,~\textrm{for} ~|q|<1.
\end{align*}
Uchimura \cite[Equation (3)]{uchimura81} also rediscovered \eqref{entry4}.  
The special case $c=1$ of \eqref{entry4} goes back to Kluyver \cite{kluyver}, 
\begin{equation}\label{Kluyver}
\sum_{n=1}^{\infty} \frac{(-1)^{n-1} q^{\frac{n(n+1)}{2}}}{(1-q^n) ( q)_n  } = \sum_{n=1}^{\infty} \frac{ q^n }{1-q^n}.
\end{equation}
This identity has been independently rediscovered by Fine  \cite[Eqs. (12.4), (12.42)]{fine} and Uchimura \cite[Theorem 2]{uchimura81}.  However,  Uchimura \cite{uchimura81} gave a new infinite series expression for \eqref{Kluyver} by using a sequence of polynomials $U_1(x)=x, U_n(x)= n x^n + (1-x^n) U_{n-1}(x)$. 
These polynomials have connections to the analysis of the data structures called {\it heaps}. Mainly,  he \cite[Theorem 2]{uchimura81}  proved that 
\begin{equation}\label{Uchimura}
\sum_{n=1}^\infty n q^n (q^{n+1})_\infty =\sum_{n=1}^{\infty} \frac{(-1)^{n-1} q^{\frac{n(n+1)}{2} } }{(1-q^n) ( q)_n  } = \sum_{n=1}^{\infty} \frac{ q^n }{1-q^n}.
\end{equation}
This identity has been generalized by many mathematicians and it has a nice partition theoretic interpretation which will be discussed soon.   One of the main aims of this paper is to study different generalizations of  this identity.  Before we  mention these generalizations,  let us fix some terminologies which will be used frequently.   Any generalized form of the left most expression of the above identity we call as {\it Uchimura-type} sum.  However,  any generalization of the middle expression we name as   {\it Ramanujan-type} sum,  and generalization of the right most expression will be referred to as {\it divisor-type} sum since it's a generating function for the well-known divisor function $d(n)$.  

At this moment,  a natural  question arises that `what will be the {\it Uchimura-type} sum for Ramanujan's  identity \eqref{entry4}?' We answer this question in Corollary \ref{Entry 4 with new term}. 

Bressoud and Subbarao \cite{bresub} in 1984, demonstrated a beautiful partition theoretic interpretation,  arising from \eqref{Uchimura}, that connects partitions with the divisor function $d(n)$,  namely, 
\begin{equation}\label{BS}
\sum_{ \pi \in \mathcal{D}(n)  } (-1)^{ \# (\pi)-1 } s(\pi)=d(n),
\end{equation}
where $\mathcal{D}(n)$ is the collection of partitions of $n$ into distinct parts, $\pi$ is an integer partition of $n$, $\#(\pi)$ and $s(\pi)$ is number of parts and smallest part of the partition $\pi$ respectively,  and $d(n)$ is number of positive divisors of $n$.  Later in 1995, Fokkink, Fokkink and Wang \cite{ffw95} also rediscovered \eqref{BS}.  
 Andrews \cite{andrews08},  in his famous paper on ${\rm spt}$-function,  showed that the identity \eqref{BS} can also be obtained by differentitating $q$-analogue of Gauss's theorem \cite[p.~20,  Corollary 2.4]{andrews98}.  
Using purely combinatorical argument,  Bressoud and Subbarao \cite{bresub} further generalized \eqref{BS}. Mainly,  they proved that,  for $ m\in \mathbb{N}\cup \{0\}$, 
\begin{align}\label{BS General}
\sum_{\pi \in \mathcal{D}(n)} (-1)^{\#(\pi)-1}\sum_{j=1}^{s(\pi)}(\ell(\pi)-s(\pi)+j)^m =\sigma_m(n):= \sum_{d|n} d^m,
\end{align}
where $\ell(\pi)$ denotes the largest part of the partition $\pi$.  Letting $m=0$ in the above equation gives \eqref{BS}.
Recently, the last three authors \cite{BEM21} established an identity that is not only a one variable generalization of \eqref{BS General}, but also extends the domain of $m$ from non-negative integers to the set of all integers. To prove their result,  they used a differential and integral operator on a partition theoretic interpretation of Ramanujan's identity \eqref{entry4}.  
The generalized identity \cite[Theorem 2.4]{BEM21} is as  follows, for $m \in \mathbb{Z}$ and $c \in \mathbb{C}$, 
\begin{align}\label{BEM}
\sum_{\pi \in \mathcal{D}(n)} (-1)^{\#(\pi)-1}\sum_{j=1}^{s(\pi)}(\ell(\pi)-s(\pi)+j)^m c^{(\ell(\pi)-s(\pi)+j)} = \sigma_{m, c}(n) :=\sum_{d\mid n} d^m c^d. 
\end{align}
%$a=1$ in above identity gives that \eqref{BS General} is true for all integers.
Motivated from the combinatorial argument of Bressoud and Subbarao, the authors \cite[Theorem 2.1]{ABEM} recently showed that the above identity is in fact valid for  any $m \in \mathbb{C}$. 
Moreover,  with the help of a fractional differential operator and analytic continuation the authors gave a simple proof of the generalized identity \eqref{BEM}.  

\subsection{A generalization by Uchimura}
Apart from obtaining the leftmost expression in \eqref{Uchimura},  Uchimura \cite{uchimura87} in 1987,  further gave a one variable generalization of the identity \eqref{Uchimura} in the following way.  For a non-negative integer $m$,  he defined
\begin{align}\label{M_m and K_(m+1)}
M_m := M_m(q) = \sum_{n=1}^{\infty} n^m q^n (q^{n+1})_{\infty}, \quad \text{and} \quad K_{m+1} := K_{m+1}(q)= \sum_{n=1}^{\infty} \sigma_m(n) q^n.
\end{align} 
Then the following properties hold:
\begin{enumerate}
\item $ \displaystyle \exp \left( \sum_{m=1}^{\infty} K_m t^m / m! \right) =  1+ \sum_{m=1}^{\infty} M_m t^m / m!.$
\item Let $Y_m$ be the Bell polynomial defined by
\begin{equation}\label{define Bell polynomial}
Y_m \left( u_1, u_2, \dots, u_m\right) = \sum_{\Pi(m)} \frac{m!}{k_1 ! \dots k_m !} \left( \frac{u_1}{1!} \right)^{k_1} \dots \left( \frac{u_m}{m!} \right)^{k_m},
\end{equation}
where $\Pi(m)$ denotes a partition of $m$ with
$k_1 + 2 k_2 + \cdots + mk_m = m.  $
Then for any $m \geq 1$,   we have 
\begin{equation}\label{uchimura ideantity polynomial version}
M_m = Y_m (K_1, \dots, K_m).             
\end{equation}
\end{enumerate}
This identity was later rediscovered by Andrews,  Crippa and Simon \cite[Theorem 2.6]{andrewssiam1997}.  
Very recently, the authors \cite[Theorem 2.2]{ABEM} gave a one variable generalization of the above result by defining,  for a non-negative integer $m$ and a complex number $c$ with $|cq|<1$,  
\begin{align}
M_{m,c}& := M_{m,c}(q)= \sum_{n=1}^{\infty}n^mc^nq^n(q^{n+1})_{\infty}, \hspace{5mm} \nonumber\\
K_{m+1,c} & :=  K_{m+1,c} (q) = \sum_{n=1}^{\infty} \sigma_{m,c}(n)q^n.  \label{K function}
\end{align}
%The following relations between $M_{m, c}$ and $K_{m,c}$ .
Then $M_{m, c}$ and $K_{m,c}$ satisfy the following relations:
\begin{align*}
\frac{(q)_\infty}{(cq)_\infty} \exp\bigg(\sum_{m=1}^{\infty}K_{m,c}\frac{t^m}{m!}\bigg)=  \frac{(q)_\infty}{(c q)_\infty} +  \sum_{m=1}^{\infty}M_{m,c}\frac{t^m}{m!}, \quad \text{and}
\end{align*}
for any $m\geq 1$, we have
\begin{align}\label{2nd gen_Uchimura by ABEM}
M_{m,c}=\frac{(q)_\infty}{(cq)_\infty}Y_m(K_{1,c},\cdots, K_{m,c}),
\end{align}
where $Y_m(u_1,u_2,\cdots,u_m)$ denotes the Bell polynomial defined as in \eqref{define Bell polynomial}.
Letting $c=1$ in \eqref{2nd gen_Uchimura by ABEM} immediately yields \eqref{uchimura ideantity polynomial version}.  

Now one can clearly observe that,  in both of the identities  \eqref{uchimura ideantity polynomial version} and \eqref{2nd gen_Uchimura by ABEM},  the left hand side expressions are {\it Uchimura-type} generalized $q$-series,  whereas the right hand side identities are {\it divisor-type} generalized sums.  Thus,  one can ask `what are the {\it Ramanujan-type} generalized sums for the identities \eqref{uchimura ideantity polynomial version} and \eqref{2nd gen_Uchimura by ABEM}?'
We answer this question in Theorem \ref{new expression to ABEM} and Corollary \ref{Uchimura's identity with new expression}.  Interestingly,  we shall see the presence of Eulerian polynomials in the generalized {\it Ramanujan-type} sum for the identity \eqref{2nd gen_Uchimura by ABEM}.

\subsection{A generalization by Dilcher,  and Andrews-Crippa-Simon}
In 1995, Dilcher \cite[Equations (4.3),(5.7)]{dilcher} came up with an interesting generalization of the identity \eqref{Uchimura}, namely, for $k\in \mathbb{N}$,
\begin{align}\label{dilcher 1}
\sum_{n=k}^\infty {n\choose k} q^n(q^{n+1})_\infty=q^{-{k\choose 2}}\sum_{n=1}^\infty \frac{(-1)^{n-1}q^{{n+k \choose 2}}}{(1-q^n)^k (q)_n}=\sum_{j_1=1}^\infty\frac{q^{j_1}}{1-q^{j_1}} \cdots \sum_{j_k=1}^{j_{k-1}}\frac{q^{j_k}}{1-q^{j_k}}.
\end{align}
Substituting $k=1$ in the above identity, we can easily recover \eqref{Uchimura}.  Utilizing \eqref{dilcher 1},  Dilcher \cite[p.~87,  Section 4]{dilcher} tried to establish the following identity of Andrews,  Crippa and Simon \cite[Theorem 2.1]{andrewssiam1997} which also generalizes \eqref{Kluyver}.
For a fixed $k\in \mathbb{N}$, there exist a polynomial $P_k(x_1, x_2,\cdots,  x_k)$ with rational coefficients such that
%\begin{align}\label{dilcher 2}
%\sum_{n=1}^\infty \frac{(-1)^{n-1}q^{{n+1 \choose 2}}}{(1-q^n)^k (q)_n}=M_k\big(K_1(q), K_2(q),\cdots K_{k}(q)\big),
%\end{align}
%where $K_{m+1}(q)$ defined as in \eqref{M_m and K_(m+1)}. 
\begin{align}\label{dilcher 2}
\sum_{n=1}^\infty \frac{(-1)^{n-1}q^{{n+1 \choose 2}}}{(1-q^n)^k (q)_n}=P_k\big(S_0(q), S_1(q),  \dots , S_{k-1}(q)\big):= P_k(q),   
\end{align}
where
\begin{align}\label{dilcher notation for S_s}
S_m(q)=\sum_{n=1}^\infty \frac{n^m q^n}{1-q^n}.
\end{align}
Note that the above generating function $S_m(q)$ is same as the generating function $K_{m-1}(q)$,  defined in \eqref{M_m and K_(m+1)},  of the generalized divisor function $\sigma_{m}(n)$.  An interesting fact is that the series $S_{2m-1}(q)$ also appears in the Fourier series expansion of  Eisenstien series $G_{2m}(z)$ over $SL_{2}(\mathbb{Z})$.  

Now we point out that there is a small error in Dilcher's proof of \eqref{dilcher 2}. In this paper, we shall try to rectify the error and give a correct proof.   A corrected version of Dilcher's identity \cite[Theorem 3]{dilcher} is stated in the main results section (see Theorem \ref{Dilcher's generalization}).  
As the above identity \eqref{dilcher 2} of Andrews,  Crippa and Simon generalizes \eqref{Kluyver},  one may ask
`what will be the {\it Uchimura-type} generalized sum for the identity \eqref{dilcher 2}?' This was answered by Andrews,  Crippa and Simon.   In the same paper \cite[Lemma 2.2]{andrewssiam1997},  they showed that 
\begin{align}\label{ACS}
(q)_\infty \sum_{n=0}^\infty \frac{q^n}{(q)_n}  {k+n-1 \choose k}= \sum_{n=1}^\infty \frac{(-1)^{n-1}q^{{n+1 \choose 2}}}{(1-q^n)^k (q)_n}.
\end{align}
Substituting $k=1$ yields the first equality of \eqref{Uchimura}.  Thus,  the above left hand side sum is the  {\it Uchimura-type} generalized sum for \eqref{dilcher 2}. 
%{\bf Are we answering this question in the current paper?}

Dixit and the fourth author \cite[Theorem 2.1]{DM} derived a two variable generalization of  Ramanujan's identity \eqref{entry4}.   For $|a|<1,~|b|<1$ and $|c|\leq 1$,  they proved that
\begin{align}\label{dixitmaji}
\sum_{n=1}^\infty \frac{(b/a)_n a^n}{(1-cq^n)(b)_n}=\sum_{m=0}^\infty\frac{(b/c)_mc^m}{(b)_m}\left(\frac{aq^m}{1-aq^m}-\frac{bq^m}{1-bq^m}\right).
\end{align}

\subsection{A generalization by Gupta-Kumar}

Motivated from the identities \eqref{ACS} and \eqref{dixitmaji},   recently Gupta and Kumar \cite[Theorem 1.1]{GK21} established a one variable generalization  of \eqref{ACS}. 
%a two variable generalization of \eqref{Kluyver} and \eqref{Uchimura}.  
Mainly,  they showed that,  for $|a|<1$ and $k\in \mathbb{N}$,
\begin{align}\label{guptakumar}
\frac{(q)_\infty}{(a)_\infty} \sum_{n=1}^\infty \frac{(a/q)_nq^n}{(q)_n} {k+n-1 \choose k}=-\sum_{n=1}^\infty\frac{(q/a)_n a^n}{(1-q^n)^k (q)_n}. 
\end{align}
Letting $a$ tends to $0$ in \eqref{guptakumar},  one can immediately retrieve \eqref{ACS}.   Gupta and Kumar \cite[Theorem 1.7]{GK21} further obtained a {\it divisor-type} sum for \eqref{guptakumar},  which involves the same generalized divisor function $\sigma_{m,a}(n)$ present in the identity \eqref{BEM}.  They defined
\begin{align*}
\mathfrak{S}_{m,a}(q):=S_{m}(q)-R_{m,a}(q),
\end{align*}
 where $S_{m}(q)$ is same as in \eqref{dilcher notation for S_s} and
\begin{align*}
R_{m,a}(q):= {\rm Li}_{-m}(a)+\sum_{n=1}^\infty \frac{n^m a^nq^n}{1-q^n} = {\rm Li}_{-m}(a)+ K_{m+1,  a}(q),  
\end{align*}
where $K_{m+1,  a}(q)$ is defined as in \eqref{K function} and ${\rm Li}_{-m}(a)$ denotes the well-known polylogarithm function.  For $|a|<1$ and $k\in \mathbb{N}$,  there exists a polynomial $P_k(x_1, x_2,  \dots , x_k)$ with rational coefficients such that 
\begin{align}\label{guptakumar polynomial}
-\sum_{n=1}^\infty \frac{(q/a)_n a^n}{(1-q^n)^k (q)_n}=P_k\big(\mathfrak{S}_{0,a}(q), \mathfrak{S}_{1,a}(q),  \dots , \mathfrak{S}_{k-1,a}(q)\big):=P_k(a, q).
\end{align}
Letting $a \rightarrow 0$ in \eqref{guptakumar polynomial} leads to \eqref{dilcher 2}. 

In the present paper,  our main focus is to find a one variable generalization of Gupta and Kumar's identities  \eqref{guptakumar},  \eqref{guptakumar polynomial}.  As a result,  we obtain a two variable generalization of the identity \eqref{dilcher 2}  of Andrews,  Crippa and Simon.   
Moreover,  we also establish {\it Ramanujan-type} sums for \eqref{uchimura ideantity polynomial version} and \eqref{2nd gen_Uchimura by ABEM} and a {\it Uchimura-type} sum for Ramanujan's identity \eqref{entry4}.  We also rectify an identity of Dilcher \cite[Theorem 3]{dilcher}  and also obtain an elegant one variable generalization of it,  see Theorem \ref{Dilcher's generalization}.

Our paper is organized as follows.  
The main results of our paper are listed in the next section.  However,  Section \ref{application} is devoted to the application of divisor generating $q$-series given by Uchimura and its connection to the theory of random acyclic digraphs developed by Simon-Crippa-Collenberg and Andrews-Crippa-Simon.  A few basic results are stated in Section \ref{Pril}.  
The proofs of our main theorems are given in Section \ref{Proofs}.  In the last section, we discuss a few problems that might be of interest to the readers of the present article.

Now we are ready to state the main results of this article. 
\section{Main Results}\label{section2}

\begin{theorem}\label{General F function}
Let $a,~b,~c,~d$ be complex numbers with $|a|<1,~|c|<1$ and $f(z)$ be an analytic function with power series $\sum_{j=0}^\infty \lambda_j z^j$ under the condition $|z|<1$. Then we have
\begin{align*}
\sum_{n=0}^{\infty}\frac{(b/a)_n a^n}{(d)_n}f(cq^n)=\sum_{j=0}^{\infty}\lambda_j c^j\sum_{n=0}^{\infty}\frac{(ad/b)_n(-b)^nq^{{n\choose 2}+jn}}{(d)_n(aq^j)_{n+1}}. 
\end{align*}
\end{theorem}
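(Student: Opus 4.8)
The plan is to linearize the statement in $f$ and then recognize the resulting one-parameter identity as an instance of a classical transformation of basic hypergeometric series. First I would insert the power series $f(cq^n)=\sum_{j=0}^{\infty}\lambda_j c^j q^{jn}$ into the left-hand side and interchange the order of summation. Because $|a|<1$, the terms $(b/a)_n a^n/(d)_n=\prod_{i=0}^{n-1}(a-bq^i)/(d)_n$ are $O(|a|^n)$, and the factors $|q^{jn}|\le 1$ are harmless, so the double sum converges absolutely as soon as $\sum_{j}|\lambda_j||c|^j<\infty$, which holds since $f$ is analytic on $|z|<1$ and $|c|<1$; hence Fubini applies. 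Matching the coefficient of $\lambda_j c^j$ then reduces the theorem to the one-parameter identity
\begin{equation*}
\sum_{n=0}^{\infty}\frac{(b/a)_n a^n q^{jn}}{(d)_n}=\sum_{n=0}^{\infty}\frac{(ad/b)_n(-b)^n q^{\binom{n}{2}+jn}}{(d)_n(aq^j)_{n+1}},\qquad j\ge 0. \tag{$\star$}
\end{equation*}

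The crucial observation is that the left-hand side of $(\star)$ is a ${}_2\phi_1$ in disguise. Writing $(q)_n/(q)_n=1$ gives
\begin{equation*}
\sum_{n=0}^{\infty}\frac{(b/a)_n a^n q^{jn}}{(d)_n}=\sum_{n=0}^{\infty}\frac{(q)_n (b/a)_n}{(d)_n (q)_n}(aq^j)^n={}_2\phi_1\!\left(q,\,b/a;\,d;\,q,\,aq^j\right),
\end{equation*}
the numerator parameter $q$ being exactly what absorbs the $(q)_n$. To this I would apply Jackson's transformation of a ${}_2\phi_1$ into a ${}_2\phi_2$ (a standard result in the theory of basic hypergeometric series),
\begin{equation*}
{}_2\phi_1(A,B;C;q,Z)=\frac{(AZ)_\infty}{(Z)_\infty}\,{}_2\phi_2\!\left(A,\,C/B;\,C,\,AZ;\,q,\,BZ\right),\qquad |Z|<1,
\end{equation*}
with $A=q$, $B=b/a$, $C=d$, $Z=aq^j$. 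Then $AZ=aq^{j+1}$, so the prefactor collapses to $(aq^{j+1})_\infty/(aq^j)_\infty=1/(1-aq^j)$, while $C/B=ad/b$ and $BZ=bq^j$, yielding ${}_2\phi_2\!\left(q,\,ad/b;\,d,\,aq^{j+1};\,q,\,bq^j\right)$.

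It remains to unwind this ${}_2\phi_2$. Its numerator parameter $q$ once more cancels the $(q)_n$ in the definition, and for $r=s=2$ the accompanying factor is $\big[(-1)^n q^{\binom{n}{2}}\big]^{1+s-r}=(-1)^n q^{\binom{n}{2}}$; hence
\begin{equation*}
{}_2\phi_2\!\left(q,\,ad/b;\,d,\,aq^{j+1};\,q,\,bq^j\right)=\sum_{n=0}^{\infty}\frac{(ad/b)_n(-b)^n q^{\binom{n}{2}+jn}}{(d)_n(aq^{j+1})_n}.
\end{equation*}
Multiplying by the prefactor $1/(1-aq^j)$ and using $(1-aq^j)(aq^{j+1})_n=(aq^j)_{n+1}$ recovers precisely the right-hand side of $(\star)$, which completes the argument.

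The step I expect to be most delicate is the bookkeeping around the base $q$ appearing as a hypergeometric parameter: one must make the cancellation of $(q)_n$ rigorous on both sides and check that Jackson's transformation is applied within its range of validity ($|Z|=|aq^j|<1$, guaranteed by $|a|<1$), the transformed ${}_2\phi_2$ converging for every argument thanks to the $q^{\binom{n}{2}}$ factor. The justification of the interchange of summation in the first step, while routine, should also be recorded with care, since it is exactly what converts the analytic hypothesis on $f$ into the term-by-term reduction to $(\star)$.
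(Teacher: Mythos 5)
Your proof is correct, and it follows the paper's overall architecture — insert the power series of $f$, interchange the two summations, and then transform the resulting inner sum in $n$ for each fixed $j$ — but the transformation you invoke is genuinely different from the one the paper uses. The paper realizes the inner sum $\sum_{n\ge 0}(b/a)_n(aq^j)^n/(d)_n$ as the limit as $\gamma\to 0$ of a ${}_3\phi_2$ with auxiliary parameters $\gamma d$ and $\gamma bq^{j+1}$, applies the ${}_3\phi_2$ transformation (III.9) of Gasper--Rahman, and then passes to the limit again on the transformed side; you instead read the inner sum directly as ${}_2\phi_1(q,\,b/a;\,d;\,q,\,aq^j)$ and apply Jackson's ${}_2\phi_1\to{}_2\phi_2$ transformation (Gasper--Rahman (1.5.4)/(III.4)). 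I checked your parameter bookkeeping: with $A=q$, $B=b/a$, $C=d$, $Z=aq^j$ one gets the prefactor $(aq^{j+1})_\infty/(aq^j)_\infty=1/(1-aq^j)$, the parameters $C/B=ad/b$ and $BZ=bq^j$, and the extra factor $(-1)^nq^{\binom{n}{2}}$ from the ${}_2\phi_2$ supplies exactly the $(-b)^nq^{\binom{n}{2}+jn}$ in the target; absorbing $1-aq^j$ into $(aq^{j+1})_n$ gives $(aq^j)_{n+1}$ as required. Your route buys a cleaner argument — no auxiliary parameter $\gamma$ and no interchange of the limit $\gamma\to0$ with an infinite sum, a step the paper performs without comment — and your explicit justification of the initial Fubini interchange is also more careful than the paper's. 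The paper's route, on the other hand, sits one level higher in the hypergeometric hierarchy, which is consistent with the authors' later uses of the ${}_3\phi_2$ machinery elsewhere in the article; since Jackson's transformation is itself obtainable as a limiting case of (III.9), the two proofs are ultimately two presentations of the same underlying identity.
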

By letting $f(z)=(1-z)^{-\alpha}$,  $\alpha \in \mathbb{C}$,  an extended version of the Gupta-Kumar's identity \eqref{guptakumar} is obtained as an immediate corollary of the above theorem.  This corollary is significant enough to be presented as a separate theorem as it will be frequently availed throughout the paper. Prior to revealing this identity,  it is essential to define the binomial coefficient for complex numbers as the quotient of gamma functions.  For a complex number $\alpha$ such that $-\alpha \in \mathbb{C}\backslash \mathbb{N}$, and a natural number $n$, we define the binomial coefficient as follows:
\begin{align}\label{binomial coeff}
{\alpha\choose n}:=\frac{\Gamma(\alpha+1)}{\Gamma(n+1) \Gamma(\alpha-n+1)}.
\end{align}
We now move on to explore several interesting results stemming from Theorem \ref{General F function}.
\begin{theorem}\label{Rajat Rahul Generalization}
Let $a,~b,~c,~d,~\alpha$ be complex numbers with $|a|<1,~|c|<1$. Then
\begin{align*}
\sum_{n=0}^{\infty}\frac{(b/a)_n a^n}{(1-cq^n)^{\alpha}(d)_n}=\sum_{j=0}^{\infty}c^j {\alpha + j -1\choose j}\sum_{n=0}^{\infty}\frac{(ad/b)_n(-b)^nq^{{n\choose 2}+jn}}{(d)_n(aq^j)_{n+1}}. 
\end{align*}
\end{theorem}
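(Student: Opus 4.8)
The plan is to deduce this statement directly from Theorem \ref{General F function} by specializing the analytic function to $f(z) = (1-z)^{-\alpha}$; the hint preceding the statement makes exactly this choice. The entire task then reduces to verifying the hypotheses of that theorem for this $f$ and to identifying the Taylor coefficients $\lambda_j$ that appear on its right-hand side.

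First I would record that $f(z) = (1-z)^{-\alpha}$, taken with the principal branch, is analytic on the unit disk $|z|<1$ for every $\alpha \in \mathbb{C}$, since its only singularity sits at $z=1$. By the generalized binomial theorem its Maclaurin expansion on $|z|<1$ is $(1-z)^{-\alpha} = \sum_{j=0}^{\infty} \binom{-\alpha}{j}(-z)^j$. Using the definition \eqref{binomial coeff} of the binomial coefficient as a ratio of gamma functions, one has the reflection $\binom{-\alpha}{j} = (-1)^j \binom{\alpha+j-1}{j}$, so that $\binom{-\alpha}{j}(-z)^j = \binom{\alpha+j-1}{j} z^j$. Hence $f(z) = \sum_{j=0}^{\infty} \lambda_j z^j$ with $\lambda_j = \binom{\alpha+j-1}{j}$, valid precisely on $|z|<1$.

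Next I would confirm that the argument $cq^n$ remains inside this disk for all $n \geq 0$: since $|c|<1$ and $|q|<1$, we have $|cq^n| \leq |c| < 1$, so $f(cq^n) = (1-cq^n)^{-\alpha}$ is well defined and coincides with its power series. The hypotheses $|a|<1$, $|c|<1$ of Theorem \ref{General F function} are assumed, so substituting $f$ and $\lambda_j = \binom{\alpha+j-1}{j}$ into the conclusion of that theorem, and rewriting $f(cq^n) = (1-cq^n)^{-\alpha}$ as $1/(1-cq^n)^{\alpha}$, yields the claimed identity verbatim.

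I do not expect a genuine obstacle here, since all the analytic delicacy—in particular the interchange of the $j$-summation and the $n$-summation—is already built into, and justified within, Theorem \ref{General F function} under the stated convergence conditions. The only point meriting a sentence of care is the choice of branch and the generalized binomial expansion for complex $\alpha$; once $\lambda_j = \binom{\alpha+j-1}{j}$ is established, the result follows as an immediate corollary, and I would present it in exactly that way.
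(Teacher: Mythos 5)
Your proposal is correct and follows exactly the paper's route: the paper proves this theorem by taking $f(z)=(1-z)^{-\alpha}=\sum_{j=0}^{\infty}\binom{\alpha+j-1}{j}z^j$ in Theorem \ref{General F function}, which is precisely your specialization. Your additional checks (the branch choice, the reflection identity for the binomial coefficient, and $|cq^n|<1$) are sound elaborations of the same one-line argument.
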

By substituting $d=q$ into the above identity, we arrive at a two variable generalization to Gupta and Kumar's identity \eqref{guptakumar}.  Mainly,   we obtain the following identity. 
\begin{corollary}\label{2-var of RR} 
Let $a,~b,~c,~\alpha$ be complex numbers with $|c|<1$, then we have
\begin{align*}
\sum_{n=0}^{\infty}\frac{(b/a)_n a^n}{(1-cq^n)^{\alpha}(q)_n}=\sum_{n=0}^{\infty}c^n{\alpha+n-1\choose n}\frac{(bq^n)_{\infty}}{(aq^n)_{\infty}}. 
\end{align*}
\end{corollary}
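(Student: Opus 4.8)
The plan is to derive Corollary \ref{2-var of RR} directly from Theorem \ref{Rajat Rahul Generalization} by setting $d=q$ and then collapsing the inner sum into a clean infinite product. The substitution $d=q$ is immediate on the left-hand side, yielding $\sum_{n=0}^{\infty}\frac{(b/a)_n a^n}{(1-cq^n)^{\alpha}(q)_n}$, which matches the stated left side. The entire content of the proof therefore lies in simplifying the right-hand double sum of Theorem \ref{Rajat Rahul Generalization} after the specialization $d=q$.

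First I would write out the inner sum with $d=q$, namely
\begin{align*}
\sum_{n=0}^{\infty}\frac{(aq/b)_n(-b)^nq^{{n\choose 2}+jn}}{(q)_n(aq^j)_{n+1}}.
\end{align*}
The key observation is that with $d=q$ the denominator factor $(d)_n=(q)_n$ and the factor $(aq^j)_{n+1}$ combine favorably. The plan is to recognize this single sum (over $n$, with $j$ fixed) as a ${}_1\phi_0$ or ${}_2\phi_1$ basic hypergeometric series that admits closed-form evaluation via the $q$-binomial theorem or the $q$-Gauss summation. Concretely, I expect that after pulling the $n$-independent factor $1/(1-aq^j)$ out of $(aq^j)_{n+1}=(1-aq^j)(aq^{j+1})_n$, the remaining sum telescopes under the $q$-Gauss identity $\sum_{n\ge 0}\frac{(A)_n(B)_n}{(q)_n(C)_n}\left(\frac{C}{AB}\right)^n=\frac{(C/A)_\infty(C/B)_\infty}{(C)_\infty(C/AB)_\infty}$ into a ratio of the form $\frac{(bq^j)_\infty}{(aq^j)_\infty}$, up to the $(1-aq^j)$ correction. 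Matching parameters $A=aq/b$, $C=aq^{j+1}$ (with the $B\to\infty$ limit accounting for the $q^{\binom{n}{2}}(-b)^n$ factor) should produce exactly the quotient $\frac{(bq^n)_\infty}{(aq^n)_\infty}$ appearing in the corollary once the summation index $j$ is relabeled as $n$.

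The main obstacle I anticipate is correctly handling the $q^{\binom{n}{2}}(-b)^n$ weight, which signals that one of the hypergeometric parameters must be sent to infinity. This requires taking a careful limit in the $q$-Gauss or Heine summation, and verifying that the $(1-aq^j)$ factor from splitting $(aq^j)_{n+1}$ combines with the product $\frac{(bq^{j+1})_\infty}{(aq^{j+1})_\infty}$ to restore the clean form $\frac{(bq^j)_\infty}{(aq^j)_\infty}$; specifically $(1-aq^j)$ cancels against the missing $n=0$ term of $(aq^j)_\infty$ versus $(aq^{j+1})_\infty$. Once this inner evaluation is established, the outer sum over $j$ is already in the desired shape, so no further manipulation is needed. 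The whole argument is thus a specialization followed by one nontrivial summation lemma, and the delicate point is purely the bookkeeping of the limiting parameter and the shift between $(aq^j)_\infty$ and $(aq^{j+1})_\infty$.
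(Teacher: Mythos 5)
Your proposal matches the paper's proof essentially step for step: the paper also sets $d=q$, splits $(aq^j)_{n+1}=(1-aq^j)(aq^{j+1})_n$, writes the inner sum as $\lim_{\gamma\to 0}$ of a ${}_2\phi_1$ with parameters $aq/b$, $b/\gamma$, $aq^{j+1}$ at argument $\gamma q^j$ (exactly your ``send a parameter to infinity'' device for the $q^{\binom{n}{2}}(-b)^n$ weight), evaluates it by $q$-Gauss to $(bq^j)_\infty/(aq^{j+1})_\infty$, and absorbs the $(1-aq^j)$ factor to get $(bq^j)_\infty/(aq^j)_\infty$. The approach and all the delicate bookkeeping you flag are identical to the paper's.
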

Letting $b=q^2$ and $c=q$ in Corollary \ref{2-var of RR} and upon simplification, we get the following immediate extended form of \eqref{guptakumar}. 
\begin{corollary}\label{Gupta Kumar Extension}
For complex numbers $a$ and $\alpha$ with $|a|<1$,  we have
\begin{align*}
\sum_{n=1}^{\infty}\frac{(q/a)_n a^n}{(1-q^n)^{\alpha}(q)_n}=-\frac{(q)_{\infty}}{(a)_{\infty}}\sum_{n=1}^{\infty}q^n{\alpha+n-1\choose \alpha }\frac{(a/q)_{n}}{(q)_{n}}. 
\end{align*}
\end{corollary}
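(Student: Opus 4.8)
The plan is to deduce the identity directly from Corollary \ref{2-var of RR} by specializing $b=q^2$, $c=q$, together with a shift of the summation index and elementary $q$-Pochhammer manipulations. First I would record the two product factorizations that drive the simplification of the right-hand side: for every integer $n\geq 0$,
\begin{equation*}
(q^{n+2})_\infty=\frac{(q)_\infty}{(q)_{n+1}},\qquad (aq^n)_\infty=\frac{(a)_\infty}{(a)_n}.
\end{equation*}
Consequently, at $b=q^2$ the ratio $(bq^n)_\infty/(aq^n)_\infty$ equals $(q)_\infty (a)_n/\big((a)_\infty(q)_{n+1}\big)$, which pulls the global constant $(q)_\infty/(a)_\infty$ out of the series on the right of Corollary \ref{2-var of RR}.

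The crux is the left-hand side. Setting $b=q^2$, $c=q$ turns the left side of Corollary \ref{2-var of RR} into $\sum_{n\geq 0}(q^2/a)_n a^n/\big((1-q^{n+1})^{\alpha}(q)_n\big)$, which is not literally the left side of the claim; the two are reconciled by shifting the index. Writing the claimed left side $\sum_{n\geq 1}(q/a)_n a^n/\big((1-q^n)^{\alpha}(q)_n\big)$ with $n\mapsto n+1$ and applying the shift identities $(q/a)_{n+1}=(1-q/a)(q^2/a)_n$ and $(q)_{n+1}=(1-q^{n+1})(q)_n$, the claimed left side becomes $a(1-q/a)\sum_{n\geq 0}(q^2/a)_n a^n/\big((1-q^{n+1})^{\alpha+1}(q)_n\big)$. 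Thus it suffices to invoke Corollary \ref{2-var of RR} with $b=q^2$, $c=q$ and exponent $\alpha+1$ (legitimate, since there the exponent is an arbitrary complex parameter). Observe that the extra factor $(1-q^{n+1})$ coming from $(q)_{n+1}$ is exactly what raises the exponent from $\alpha$ to $\alpha+1$; getting this bookkeeping correct is the main obstacle.

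Finally I would substitute the simplified right side and reindex it by $m=n+1$. The right side of the invoked corollary carries the binomial ${\alpha+n \choose n}$ (the exponent being $\alpha+1$) and the factor $(a)_n/(q)_{n+1}$; after the reindexing these become ${\alpha+m-1 \choose m-1}$ and $(a)_{m-1}/(q)_m$, and I would rewrite $(a)_{m-1}=(a/q)_m/(1-a/q)$. The scalar prefactors then collapse through $a(1-q/a)\cdot(1-a/q)^{-1}=-q$, which simultaneously produces the overall minus sign and turns $q^{m-1}$ into $q^m$. Invoking the symmetry ${\alpha+m-1 \choose m-1}={\alpha+m-1 \choose \alpha}$ of the binomial coefficient \eqref{binomial coeff} then delivers precisely the right-hand side of the corollary. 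The only genuinely delicate points are the index shift that forces the exponent $\alpha+1$ and the careful tracking of the sign arising from the ratios of $q$-Pochhammer symbols; the remaining steps are routine.
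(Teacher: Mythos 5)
Your proposal is correct and follows essentially the same route as the paper: both specialize Corollary \ref{2-var of RR} at $b=q^2$, $c=q$ with the exponent bumped to $\alpha+1$ (the paper phrases this as multiplying by $a-q$ and replacing $\alpha$ by $\alpha+1$, you as an index shift absorbing the factor $(1-q^{n+1})$), then reindex and simplify the $q$-Pochhammer ratios, with the prefactor collapsing to $-q$ to produce the sign. The bookkeeping you describe checks out.
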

Gupta and Kumar \cite[Theorem 1.1]{GK21}  originally derived the above result under the assumption that $\alpha$ is a natural number. However, it is important to note that their identity is in fact valid for any complex number $\alpha$ as well.

Further,  as an application of Theorem \ref{Rajat Rahul Generalization},  we obtain an elegant two variable generalization of Uchimura's identity \eqref{Uchimura}.
\begin{corollary}\label{Uchimura generalization_2 variable}
For  a complex number $\alpha$ and a natural number $r$, we have
\begin{align}\label{two variable gen_Uchimura}
\sum_{n=1}^\infty \frac{(-1)^{n-1}q^{{n \choose 2}+nr}}{(1-q^n)^\alpha(q)_n} = \sum_{j=r}^\infty  {\alpha + j-r \choose \alpha} q^j (q^{j+1})_\infty.
\end{align}
\end{corollary}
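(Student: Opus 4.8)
The plan is to obtain \eqref{two variable gen_Uchimura} by specializing Corollary \ref{2-var of RR} (which is the $d=q$ case of Theorem \ref{Rajat Rahul Generalization}) and then letting $a\to 0$. The guiding heuristic is that the factor $q^{\binom{n}{2}}$ sitting alongside the single Pochhammer $(q)_n$ on the left of \eqref{two variable gen_Uchimura} is exactly what the kernel $(b/a)_n a^n$ degenerates to as $a\to 0$, since
\begin{equation*}
\lim_{a\to 0}(b/a)_n a^n=\prod_{k=0}^{n-1}(-bq^k)=(-b)^n q^{\binom{n}{2}}.
\end{equation*}
Accordingly I would apply Corollary \ref{2-var of RR} with $c=q$, $b=q^{r+1}$, and with its exponent replaced by $\alpha+1$, and then send $a\to 0$. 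The constraint $|c|<1$ is automatic since $c=q$ and $|q|<1$, so the only analytic point to check is the passage $a\to 0$.

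Carrying out the limit, the left-hand side of Corollary \ref{2-var of RR} becomes $\sum_{n=0}^\infty (-1)^n q^{n(r+1)+\binom{n}{2}}/((1-q^{n+1})^{\alpha+1}(q)_n)$, while its right-hand side becomes $\sum_{n=0}^\infty q^n\binom{\alpha+n}{n}(q^{n+r+1})_\infty$ (using $(aq^n)_\infty\to 1$ and $\binom{(\alpha+1)+n-1}{n}=\binom{\alpha+n}{n}$). The decisive manipulation is the reindexing $m=n+1$ on the left: writing $(q)_n=(q)_m/(1-q^m)$ cancels one factor of $1-q^m$, so that $(1-q^{n+1})^{\alpha+1}(q)_n$ collapses to $(1-q^m)^{\alpha}(q)_m$, and a short exponent computation gives $n(r+1)+\binom{n}{2}=\binom{m}{2}+rm-r$. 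Hence the left-hand side equals $q^{-r}\sum_{m=1}^\infty (-1)^{m-1}q^{\binom{m}{2}+rm}/((1-q^m)^\alpha(q)_m)$. On the right, setting $j=n+r$ and using the symmetry $\binom{\alpha+n}{n}=\binom{\alpha+n}{\alpha}$ rewrites the sum as $q^{-r}\sum_{j=r}^\infty\binom{\alpha+j-r}{\alpha}q^j(q^{j+1})_\infty$. Cancelling the common factor $q^{-r}$ from both sides yields precisely \eqref{two variable gen_Uchimura}.

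I expect the \emph{main obstacle} to be pinning down the exact specialization rather than any deep analytic difficulty. In particular, one has to foresee that the exponent must be $\alpha+1$ and that $b=q^{r+1}$ (not $\alpha$ and $b=q^r$): it is exactly the shift $m=n+1$, needed to turn $1-cq^n=1-q^{n+1}$ into $1-q^m$, that absorbs one factor of $1-q^m$ and one power $q^{r}$, and only the shifted parameters make the stray factors cancel cleanly on both sides. The remaining point is to justify exchanging $\lim_{a\to 0}$ with the summation; this is routine, since for $a$ near $0$ the general terms on each side of Corollary \ref{2-var of RR} are dominated by a convergent series (as $|q|<1$), so both sides are continuous at $a=0$ and the limit may be taken termwise.
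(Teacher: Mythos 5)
Your proof is correct, but it takes a somewhat different (and in fact more economical) route than the paper's. The paper proves this corollary by first letting $a\to 0$ in Theorem \ref{Rajat Rahul Generalization} with general $d$, which leaves an inner sum of the form $\sum_{n}(-b)^nq^{{n\choose 2}+jn}/(d)_n$; it then applies Fine's transformation \eqref{Fine's transformation 2phi1} to that inner sum, performs the specialization $\alpha\to\alpha+1$, $d\to c$, $b=q^{r+1}$ together with the index shifts, and only at the very last step sets $c=q$, at which point the factor $(c/q)_n=(1)_n$ kills every term of the remaining inner sum except $n=0$. You instead start from Corollary \ref{2-var of RR} (whose right-hand side is already in the closed product form $\sum_n c^n{\alpha+n-1\choose n}(bq^n)_\infty/(aq^n)_\infty$, obtained in the paper via the $q$-Gauss sum), specialize $c=q$, $b=q^{r+1}$, $\alpha\to\alpha+1$, and take $a\to 0$ at the end, so that $(aq^n)_\infty\to 1$ and no further hypergeometric transformation is needed. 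Your reindexing $m=n+1$ on the left (absorbing one factor $1-q^m$ and producing the exponent $\binom{m}{2}+rm-r$) and $j=n+r$ on the right, followed by cancelling $q^{-r}$, is exactly the same bookkeeping the paper does, and your identification of the symmetry ${\alpha+j-r\choose j-r}={\alpha+j-r\choose \alpha}$ is valid under the paper's Gamma-function definition \eqref{binomial coeff}. The justification of the termwise limit $a\to 0$ via domination is also sound, since $(b/a)_na^n=\prod_{k=0}^{n-1}(a-bq^k)$ is uniformly geometrically small for $|a|$ small. The net effect is that your argument reuses an already-proved corollary and avoids Fine's transformation entirely, at the cost of nothing; the paper's version keeps $d$ general longer, which is not needed for this particular corollary.
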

In particular,  when $\alpha =r=1$,  it yields the first equality of Uchimura's  identity \eqref{Uchimura}.  Quite surprisingly,  letting $\alpha = r$,  we recover the first equality of Dilcher's identity \eqref{dilcher 1}.   Thus,  the above identity is a unified generalization of Uchimura's identity \eqref{Uchimura} as well as Dilcher's identity \eqref{dilcher 1}. 
% {\bf Question}: What will be the divisor-type sum for the above generalization?
 Now an immediate question arises at this point,  namely,  `what will be the {\it divisor-type} sum for the above generalization?' Unfortunately,  in this paper,  we are unable to answer this question.  

\subsection{A generalization of Dilcher's identity}

Before proceeding to the next theorem, we would like to introduce an identity formulated by Dilcher \cite[Theorem 3]{dilcher}. For $k \in \mathbb{N}$,  the identity is the following:
\begin{align}\label{dilcher's wrong identity}
\sum_{n=1}^\infty \frac{(-1)^{n-1} q^{n+1 \choose 2}}{(1-q^n)^k(q)_n}=\sum_{t=1}^k \left\{\sum_{j=0}^{k-t} {k-1 \choose j+t-1}\frac{s(j+t,t)}{(j+t)!}\right\} U_t(q),
\end{align}
where $s(m,n)$ represents the Stirling numbers of the first kind, and $$U_t(q):=\sum_{n=1}^\infty n^t q^n(q^{n+1})_\infty.$$
However, it is important to note that there is an error in the above identity \eqref{dilcher's wrong identity}.   Instead of $U_t(q)$ on the right side expression of \eqref{dilcher's wrong identity}, it should be the tail $U_{t,j+t}(q)$ of $U_t(q)$,  defined in \eqref{U(m,k)}. 
%We shall demonstrate a corrected version through a one-variable generalized version of \eqref{dilcher's wrong identity}. 
We not only correct the above identity \eqref{dilcher's wrong identity} but also give a one-variable generalization of it.  Here is the generalized identity.  

 %By setting $\alpha$ as a fixed positive integer, denoted as $k$, in \eqref{Uchimura generalization}, we arrive at the ensuing result:
 
\begin{theorem} \label{Dilcher's generalization}
 For natural numbers $r$ and $k$, we have
\begin{align}\label{Dilcher's gen_equn}
\sum_{n=1}^\infty \frac{(-1)^{n-1}q^{{n \choose 2}+nr}}{(1-q^n)^k(q)_n} &= \sum_{t=1}^k \sum_{j=0}^{k-t} {k-1 \choose j+t-1} A(j+t,r,t) U_{t,r+j+t-1}(q)\nonumber \\& + \sum_{j=1}^k {k-1 \choose j-1} A(j,r,0) U_{0,r+j-1} (q), 
\end{align}
where
\begin{align}\label{A(j,r,i)}
A(j,r,t):=\sum_{l=t}^j \frac{s(j,l)}{j!}{l \choose t} (1-r)^{l-t}, 
\end{align}
and
\begin{align}\label{U(m,k)}
U_{m,i}(q):=\sum_{n=i}^\infty n^m q^n(q^{n+1})_\infty
\end{align}
is the tail of Uchimura's sum $M_m(q)$ defined in \eqref{uchimura ideantity polynomial version}.
\end{theorem}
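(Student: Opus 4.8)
The plan is to begin from the two-variable generalization of Uchimura's identity established in Corollary \ref{Uchimura generalization_2 variable}. Taking $\alpha=k$ there, the left-hand side of \eqref{Dilcher's gen_equn} equals
\begin{equation*}
\sum_{j=r}^\infty {k+j-r \choose k}\, q^j (q^{j+1})_\infty,
\end{equation*}
so the entire problem reduces to rewriting this series in terms of the tails $U_{m,i}(q)$ defined in \eqref{U(m,k)}. The task is now purely combinatorial: one must expand the degree-$k$ polynomial ${k+j-r \choose k}$ in $j$ so as to produce simultaneously the correct powers of $j$ and the correct shifted lower limits of summation.

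First I would split the binomial coefficient by Vandermonde's convolution, writing $k+(j-r)=(k-1)+(j-r+1)$, which gives
\begin{equation*}
{k+j-r \choose k} = \sum_{i=1}^k {k-1 \choose i-1}{j-r+1 \choose i}.
\end{equation*}
Inserting this and interchanging the finite sum over $i$ with the series over $j$, the problem separates into evaluating, for each $1\le i\le k$, the inner sum $\sum_{j=r}^\infty {j-r+1 \choose i} q^j (q^{j+1})_\infty$. For fixed $i$ I would then convert ${j-r+1 \choose i}$ into ordinary powers of $j$: expressing the falling factorial through the Stirling numbers of the first kind yields ${j-r+1 \choose i}=\frac{1}{i!}\sum_{l=0}^i s(i,l)(j-r+1)^l$, and expanding $(j-r+1)^l=(j+(1-r))^l$ by the binomial theorem regroups the result exactly into $\sum_{t=0}^i A(i,r,t)\,j^t$, with $A(i,r,t)$ the coefficient defined in \eqref{A(j,r,i)}.

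The step I expect to be the main obstacle is the correct handling of the lower summation limit. Although the preceding is a polynomial identity valid for every $j$, the theorem records tails beginning at $r+i-1$ rather than at $r$. This is justified because ${j-r+1 \choose i}$ vanishes at the integers $j=r,r+1,\dots,r+i-2$; hence those terms contribute nothing to the series, and the summation may be restarted at $j=r+i-1$ \emph{before} the polynomial substitution is carried out. Performing the substitution in the wrong order would introduce spurious finite correction terms (which in fact cancel, but only after extra bookkeeping), so the vanishing must be invoked carefully. The clean outcome is
\begin{equation*}
\sum_{j=r}^\infty {j-r+1 \choose i}\, q^j (q^{j+1})_\infty = \sum_{t=0}^i A(i,r,t)\, U_{t,\,r+i-1}(q).
\end{equation*}

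Finally I would reassemble, obtaining $\sum_{i=1}^k {k-1 \choose i-1}\sum_{t=0}^i A(i,r,t)\,U_{t,r+i-1}(q)$. Isolating the $t=0$ contribution and renaming $i\mapsto j$ produces precisely the second sum in \eqref{Dilcher's gen_equn}. For the terms with $t\ge 1$, interchanging the order of summation so that $t$ runs from $1$ to $k$ and $i$ from $t$ to $k$, and then setting $i=j+t$, reproduces the first double sum in \eqref{Dilcher's gen_equn}. As a byproduct, the corrected form of Dilcher's identity \eqref{dilcher's wrong identity} drops out on putting $r=1$: there $(1-r)^{l-t}=0^{l-t}$ forces $l=t$, so $A(j+t,1,t)$ collapses to $s(j+t,t)/(j+t)!$ and, since $s(j,0)=0$ for $j\ge 1$, the entire $t=0$ sum disappears.
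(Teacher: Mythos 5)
Your proposal is correct and is essentially the paper's own argument: both hinge on Corollary \ref{Uchimura generalization_2 variable}, the Stirling-number expansion of the shifted binomial coefficient into $\sum_{t}A(\cdot,r,t)\,n^t$ (which is exactly how $A(j,r,t)$ arises in the paper), and the same final separation of the $t=0$ terms followed by the reindexing $j\mapsto j+t$. The only difference is the order of operations: you apply the corollary once with $\alpha=k$ and then split ${k+j-r\choose k}$ by Chu--Vandermonde, which obliges your (correctly handled) observation that ${j-r+1\choose i}$ vanishes for $j=r,\dots,r+i-2$ so that each tail may be restarted at $j=r+i-1$, whereas the paper first decomposes the $q$-series side via $q^{nr}/(1-q^n)^k=q^{n(r-1)}\sum_{j=1}^k{k-1\choose j-1}q^{nj}/(1-q^n)^j$ and then applies the corollary to each piece $V_{j,r+j-1}$, which delivers those lower summation limits automatically; both routes meet at the identical intermediate expression $\sum_{j=1}^k{k-1\choose j-1}\sum_{n=r+j-1}^\infty{n-r+1\choose j}q^n(q^{n+1})_\infty$.
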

Letting $r=1$ in the left-hand side expression of \eqref{Dilcher's gen_equn},  it transforms into the form presented in \eqref{dilcher's wrong identity}.  However,  substituting $r=1$ on the right-hand side of \eqref{Dilcher's gen_equn},  we see that the second finite sum vanishes as $A(j,1, 0)=0$ because  $s(j,0)=0$ for any $j \geq 1$.   Then the first sum becomes 
\begin{align}\label{Dilcher_rhs}
\sum_{t=1}^k \sum_{j=0}^{k-t} {k-1 \choose j+t-1} A(j+t,1,t) U_{t,j+t}(q).
\end{align}
Now from the definition \eqref{A(j,r,i)} of $A(j+t, 1,t)$,  it follows that $A(j+t, 1, t)= \frac{s(j+t,  t)}{(j+t)!}$.  
Thus,  \eqref{Dilcher_rhs} becomes
 \begin{align*}
\sum_{t=1}^k \sum_{j=0}^{k-t} {k-1 \choose j+t-1} \frac{s(j+t,  t)}{(j+t)!} U_{t,j+t}(q),
\end{align*}
which is slightly different from the right hand expression of Dilcher's identity \eqref{dilcher's wrong identity}.  The only difference is that the function $U_t(q)$ in \eqref{dilcher's wrong identity} is being replaced by $U_{t,j+t}(q)$.  
%which is exactly same as the right hand side expression of  Dilcher's identity \eqref{dilcher's wrong identity} if we replace $U_{t,t+j}(q)$ by $U_i(q)$.
 %leading to the tail expression of the Uchimura expression within the first sum instead of $U_i(q)$.

%\begin{theorem}\label{Finite Analogue version of 2-var of RR} For $k,~N \in \mathbb{N}$, we have,
%\begin{align}
%\sum_{n=0}^{N}\begin{bmatrix}
%N\\
%n
%\end{bmatrix}\frac{(a)_{N-n}(b/a)_na^n}{(a)_N(1-cq^n)^k}=(q)_N\sum_{n=0}^{N}\sum_{j=0}^{\infty}\begin{bmatrix}
%n+j\\
%j
%\end{bmatrix}\frac{(bq^n)_{N-n}c^{n+j}q^{j(N-n)}}{(aq^n)_{N-n}(q)_{N-n}}{k+n+j-1\choose n+j}.
%\end{align}
%\end{theorem}
%As a corollary of above theorem, we found a generalization of --- result.
%\begin{corollary}\label{corollary of finite analogue}
%For a natural number $N$, we have
%\begin{align}
%\sum_{n=0}^N 
%\begin{bmatrix}
%N \\ n
%\end{bmatrix} \frac{(-1)^{n}q^{{n+1\choose 2}}}{1-cq^n} = \frac{1}{1-c}\frac{(q)_N}{(c)_N}.
%\end{align}
%\end{corollary}
%Before going to the next theorem, we first define the function 

Motivated by the identity \eqref{dilcher's wrong identity},  Dilcher \cite[p.~89]{dilcher} defined an interesting sequence of rational numbers involving Stirling numbers of the first kind.  Mainly,  for $k \in \mathbb{N}$ and $1 \leq t \leq k$,  he defined
\begin{align*}
a(k,  t) = \sum_{j=0}^{k-t} {k-1 \choose j+t-1}\frac{s(j+t,t)}{(j+t)!}. 
\end{align*}
He further proved the following recursive relation for $a(k,t)$ with the help of the recursion formula  for the Stirling numbers of the first kind,  
\begin{align}\label{Dilcher recursion}
a(1,1)= 1,  \quad a(k+1,  t+1) = \frac{k}{k+1}a(k,  t+1) + \frac{1}{k+1} a(k,  t).  
\end{align}
Utilizing the above recursion formula,  Dilcher \cite[Equation (4.10)]{dilcher} obtained an interesting identity,  namely, 
\begin{align}\label{Intersting identity of Dilcher}
\sum_{t=1}^k a(k,  t) = 1.  
\end{align}
Now inspired by the generalization of Dilcher's identity i.e.,  Theorem \ref{Dilcher's generalization},  we define a new sequence of numbers.  
First,  we  rewrite Theorem \ref{Dilcher's generalization} as follows: 
\begin{align*}
\sum_{n=1}^\infty \frac{(-1)^{n-1}q^{{n \choose 2}+nr}}{(1-q^n)^k(q)_n}=\sum_{t=1}^k C(k,r,t)U_{t,r+j+t-1}(q)+ C(k,r,0)U_{0,r+j-1}(q)
\end{align*}
where the coefficients $C(k,r,t)$ and $C(k,r,0)$ defined as
\begin{equation}
\begin{aligned}\label{define C(k,r,t)}
C(k,r,t)& :=\sum_{j=0}^{k-t} {k-1 \choose j+t-1} A(j+t,r,t),\\
C(k,r,0)&:=\sum_{j=1}^{k} {k-1 \choose j-1} A(j,r,0), 
\end{aligned}
\end{equation}
where $A(j,r,t)$ is defined in \eqref{A(j,r,i)}.  
Letting $r=1$,  one can easily verify that $A(j, 1, t) = \frac{s(j,t)}{(j)!}$ and $C(k,1,t)$ is nothing but Dilcher's function $a(k,t)$.  Analysing these coefficients,  we prove the following beautiful recursion formulae: 
\begin{theorem}\label{recurrence relation of C(k,r,t) and A(j,r,t)}
Let  $r,  j,  k \in \mathbb{N}$.  For $1 \leq t \leq j$,   we have
\begin{align}\label{recurrence relation of A(j,r,t)}
A(1,r,1)=1, \hspace{5mm}A(j,r,t)=(j+1)A(j+1,r,t+1)+(r+j-1)A(j,r,t+1),
\end{align}
and
\begin{align}\label{recurrence relation of C(k,r,t)}
C(1,r,1)=1,\hspace{5mm}C(k+1,r,t+1)=\frac{k+1-r}{k+1}C(k,r,t+1)+\frac{1}{k+1}C(k,r,t).  
\end{align}
\end{theorem}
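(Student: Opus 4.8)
The plan is to extract a clean closed form for $A(j,r,t)$ and reduce both recursions to elementary manipulations of the falling factorial. Set $P_j(y):=y(y-1)\cdots(y-j+1)=\sum_{l=0}^{j}s(j,l)\,y^l$, so that the Stirling numbers of the first kind are exactly its coefficients. Using the elementary identity $\binom{l}{t}(1-r)^{l-t}=\tfrac{1}{t!}\,\tfrac{d^t}{dy^t}y^l\big|_{y=1-r}$ (valid for every $l\ge 0$, both sides vanishing when $l<t$), the defining sum \eqref{A(j,r,i)} collapses to
\begin{equation*}
A(j,r,t)=\frac{1}{j!\,t!}\,P_j^{(t)}(1-r).
\end{equation*}
This single formula drives everything; in particular it makes the vanishing $A(m,r,t)=0$ for $m<t$ transparent (since $\deg P_m=m$), a boundary fact I will need below.

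I would first prove the recursion \eqref{recurrence relation of A(j,r,t)} directly from the product relation $P_{j+1}(y)=(y-j)\,P_j(y)$. Because $y-j$ is linear, differentiating $t+1$ times by the Leibniz rule leaves only two terms, $P_{j+1}^{(t+1)}(y)=(y-j)P_j^{(t+1)}(y)+(t+1)P_j^{(t)}(y)$. Evaluating at $y=1-r$ and substituting the closed form, then clearing the factorials and the common factor $t+1$, yields $(j+1)A(j+1,r,t+1)=(1-r-j)A(j,r,t+1)+A(j,r,t)$; moving the first term across, $-(1-r-j)=r+j-1$ gives exactly the stated recursion. The base case $A(1,r,1)=1$ is immediate from $P_1(y)=y$.

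For the recursion \eqref{recurrence relation of C(k,r,t)} I would reindex \eqref{define C(k,r,t)} by $m=j+t$ into the uniform shape $C(k,r,t)=\sum_{m=t}^{k}\binom{k-1}{m-1}A(m,r,t)$, which is valid for $t=0$ as well since the spurious $m=0$ term carries $\binom{k-1}{-1}=0$ and the two lines of \eqref{define C(k,r,t)} then agree. The idea is to substitute the already-established $A$-recursion into $\tfrac{1}{k+1}C(k,r,t)$, rewriting each $A(m,r,t)$ as $(m+1)A(m+1,r,t+1)+(r+m-1)A(m,r,t+1)$, and to add $\tfrac{k+1-r}{k+1}C(k,r,t+1)$. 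After shifting the index in the $A(m+1,r,t+1)$ sum and discarding the $m=t$ term (which vanishes since $A(t,r,t+1)=0$), every surviving summand is a multiple of some $A(p,r,t+1)$. Collecting coefficients and comparing with $C(k+1,r,t+1)=\sum_{p=t+1}^{k+1}\binom{k}{p-1}A(p,r,t+1)$ reduces the whole claim to the single binomial identity
\begin{equation*}
p\binom{k-1}{p-2}+(p+k)\binom{k-1}{p-1}=(k+1)\binom{k}{p-1},
\end{equation*}
which, after factoring $\tfrac{(k-1)!}{(p-1)!\,(k-p+1)!}$ out of the left-hand binomials, is the one-line numerical identity $p(p-1)+(p+k)(k-p+1)=k(k+1)$. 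The base case $C(1,r,1)=A(1,r,1)=1$ then closes the induction.

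The genuinely delicate part is the bookkeeping in the last step: aligning the summation ranges, the shift $m\mapsto m+1$, and the vanishing boundary terms so that the coefficient of each $A(p,r,t+1)$ lands precisely on the target binomial identity, and checking that the endpoints $p=t+1$ and $p=k+1$ are absorbed by the same identity rather than needing separate treatment. Once the closed form and the $A$-recursion are in place, no further properties of the Stirling numbers are invoked, so all that remains is this purely combinatorial coefficient matching.
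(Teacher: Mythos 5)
Your proposal is correct and follows essentially the same route as the paper: the closed form $A(j,r,t)=\frac{1}{j!\,t!}\frac{d^t}{dx^t}(x)_j\big|_{x=1-r}$ together with the Leibniz rule applied to the product of $(x)_j$ with the linear factor is exactly how the paper derives \eqref{recurrence relation of A(j,r,t)}, and your treatment of \eqref{recurrence relation of C(k,r,t)} (substituting the $A$-recursion, shifting the index, discarding the term with $A(t,r,t+1)=0$, and matching coefficients via $p\binom{k-1}{p-2}+(p+k)\binom{k-1}{p-1}=(k+1)\binom{k}{p-1}$) is the same computation the paper carries out. No gaps.
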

\begin{remark}
When $r=1$,  the recursion \eqref{recurrence relation of A(j,r,t)} reduces to the well-known recursion \cite[p.~214]{comtet} of the Stirling numbers of the first kind,  that is,  
\begin{align*}
s(j,  t) = s (j+1,  t+1) + j s(j,  t+1),
\end{align*}
and the recursion \eqref{recurrence relation of C(k,r,t)} reduces to the recursion \eqref{Dilcher recursion} for $a(k,t)$.  
\end{remark}
Using the definitions \eqref{A(j,r,i)}, \eqref{define C(k,r,t)} of $A(j, r, t)$ and $C(k,r,t)$,  one can also verify that,  for any $r, k \in \mathbb{N}$,  
\begin{align*}
C(k,r,k)=A(k,r,k)=\frac{1}{k!}.
\end{align*}
Further,   we prove the following interesting properties related to $A(j, r, t)$ and $C(k,r,t)$.  
\begin{theorem}\label{summation formula for A[j,r,t) and C(k,r,t)}
For any natural number $j$ and $k$, we have
\begin{align}
& \sum_{t=1}^jA(j,r,t)=\frac{(-1)^{j-1}}{(j-1)!}(r-1)r(r+1)\cdots (r+j-3),    \label{summation formula for A(j,r,t)} \\
& \sum_{t=1}^kC(k,r,t)=\frac{(-1)^{k-1}}{(k-1)!}(r-2)(r-3)(r-4)\cdots (r-k).\label{summation formula for C(j,r,t)}
\end{align}
\end{theorem}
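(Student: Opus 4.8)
The plan is to establish \eqref{summation formula for A(j,r,t)} directly from the definition \eqref{A(j,r,i)} of $A(j,r,t)$, and then to obtain \eqref{summation formula for C(j,r,t)} as a consequence by feeding the first formula into the definition \eqref{define C(k,r,t)} of $C(k,r,t)$ and applying a Vandermonde-type convolution.

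For the first identity, I would substitute \eqref{A(j,r,i)} and interchange the order of the two finite summations, writing
\begin{align*}
\sum_{t=1}^j A(j,r,t)=\frac{1}{j!}\sum_{l=1}^j s(j,l)\sum_{t=1}^l {l \choose t}(1-r)^{l-t}.
\end{align*}
The inner sum is evaluated by the binomial theorem as $(2-r)^l-(1-r)^l$. Adjoining the vanishing $l=0$ term and invoking the generating function $\sum_{l=0}^j s(j,l)x^l=x(x-1)\cdots(x-j+1)$ for the signed Stirling numbers of the first kind then gives
\begin{align*}
\sum_{t=1}^j A(j,r,t)=\frac{1}{j!}\left[\prod_{i=0}^{j-1}(2-r-i)-\prod_{i=0}^{j-1}(1-r-i)\right].
\end{align*}
The two products share the $j-1$ common factors $\prod_{i=1}^{j-1}(2-r-i)$, and factoring these out collapses the bracket to $j\prod_{i=1}^{j-1}(2-r-i)$. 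Pulling a sign out of each of the remaining factors converts this into $(-1)^{j-1}(r-1)r\cdots(r+j-3)$, which yields the claimed closed form after dividing by $j!$.

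For the second identity, I would insert the first one into \eqref{define C(k,r,t)}. Writing $m=j+t$ and interchanging the order of summation gives
\begin{align*}
\sum_{t=1}^k C(k,r,t)=\sum_{m=1}^k {k-1 \choose m-1}\sum_{t=1}^m A(m,r,t)=\sum_{m=1}^k {k-1 \choose m-1}\frac{(-1)^{m-1}}{(m-1)!}(r-1)r\cdots(r+m-3),
\end{align*}
the last step using \eqref{summation formula for A(j,r,t)}. The crucial observation is that, after the shift $p=m-1$, the quotient $\frac{(-1)^p}{p!}(r-1)r\cdots(r+p-2)$ is exactly the binomial coefficient ${1-r \choose p}$ in the sense of \eqref{binomial coeff}, so the whole sum becomes the Vandermonde convolution $\sum_{p=0}^{k-1}{k-1 \choose p}{1-r \choose p}=\sum_{p=0}^{k-1}{k-1 \choose k-1-p}{1-r \choose p}={k-r \choose k-1}$. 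Expanding this binomial coefficient and extracting a sign from each of its $k-1$ factors produces $\frac{(-1)^{k-1}}{(k-1)!}(r-2)(r-3)\cdots(r-k)$, as required.

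I expect the main obstacle to be the bookkeeping of signs and of the precise ranges of the telescoping products, and in particular confirming that the degenerate cases $j=1$ and $k=1$, where the displayed products are empty, reproduce the boundary values $A(1,r,1)=1$ and $C(1,r,1)=1$. The one genuinely substantive step is recognizing the rising factorial $(r-1)r\cdots(r+p-2)$ as ${1-r \choose p}$ up to sign, since this is precisely what turns an otherwise opaque alternating sum into a Vandermonde convolution admitting a clean evaluation; should this identification prove delicate, an alternative is to prove \eqref{summation formula for C(j,r,t)} by induction on $k$ using the recursion \eqref{recurrence relation of C(k,r,t)}, though that route requires separately tracking the $t=1$ boundary contribution.
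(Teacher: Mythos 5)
Your proposal is correct and follows essentially the same route as the paper: the first identity is obtained by interchanging the two finite sums, evaluating the inner sum via the binomial theorem as $(2-r)^l-(1-r)^l$, and invoking the Stirling generating function $\sum_{l}s(j,l)x^l=x(x-1)\cdots(x-j+1)$, exactly as in the paper's proof. For the second identity, the paper likewise reindexes to $\sum_{j=1}^k\binom{k-1}{j-1}\sum_{t=1}^jA(j,r,t)$ and evaluates the resulting alternating sum via $\sum_{j=0}^{a}(-1)^{j}\binom{a}{j}\binom{b+j}{j}=(-1)^{a}\binom{b}{a}$, which is precisely your Vandermonde convolution after the rewriting $(-1)^p\binom{r+p-2}{p}=\binom{1-r}{p}$.
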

Substituting $r=1$,  we obtain a trivial identity related to Stirling numbers of the first kind and also  recover Dilcher's identity \eqref{Intersting identity of Dilcher}.  
\begin{corollary}
For any $j,  k \in \mathbb{N}$,  we have
\begin{align*}
\sum_{t=1}^j s(j,t) =0,  \quad 
 \sum_{t=1}^k a(k,  t) = 1.  
\end{align*}
%\end{align}
\end{corollary}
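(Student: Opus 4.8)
The plan is to deduce both identities directly as the $r=1$ specialization of Theorem~\ref{summation formula for A[j,r,t) and C(k,r,t)}, using the two observations recorded immediately before that theorem: namely that $A(j,1,t)=s(j,t)/j!$ and that $C(k,1,t)=a(k,t)$, where $a(k,t)$ is Dilcher's function. No new machinery is required; everything reduces to evaluating, at $r=1$, the two products appearing on the right-hand sides of \eqref{summation formula for A(j,r,t)} and \eqref{summation formula for C(j,r,t)}.

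First I would treat the Stirling identity. Setting $r=1$ in \eqref{summation formula for A(j,r,t)} and invoking $A(j,1,t)=s(j,t)/j!$, the left-hand side becomes $\frac{1}{j!}\sum_{t=1}^j s(j,t)$, while the right-hand side is the value of $\frac{(-1)^{j-1}}{(j-1)!}(r-1)r(r+1)\cdots(r+j-3)$ at $r=1$. For $j\ge 2$ this product contains the factor $(r-1)$, which vanishes at $r=1$, so the whole right-hand side is $0$; multiplying through by $j!$ yields $\sum_{t=1}^j s(j,t)=0$, as claimed.

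Next I would handle the Dilcher identity. Setting $r=1$ in \eqref{summation formula for C(j,r,t)} and using $C(k,1,t)=a(k,t)$, the left-hand side is exactly $\sum_{t=1}^k a(k,t)$. On the right, the product $(r-2)(r-3)\cdots(r-k)$ has $k-1$ factors, and at $r=1$ it equals $(-1)(-2)\cdots(-(k-1))=(-1)^{k-1}(k-1)!$. Hence the right-hand side evaluates to $\frac{(-1)^{k-1}}{(k-1)!}\cdot(-1)^{k-1}(k-1)!=1$, giving $\sum_{t=1}^k a(k,t)=1$ and thereby recovering Dilcher's identity \eqref{Intersting identity of Dilcher}.

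The only point worth flagging is the behaviour at the smallest indices: for $j=1$ the product in \eqref{summation formula for A(j,r,t)} is empty and equals $1$, so the formula there correctly reads $s(1,1)=1$ rather than $0$ (consistent with the fact that the vanishing $\sum_t s(j,t)=0$ holds for $j\ge 2$), and similarly for $k=1$ the product in \eqref{summation formula for C(j,r,t)} is empty and the identity gives $a(1,1)=1$. I expect no genuine obstacle here: the substantive content has been absorbed entirely into Theorem~\ref{summation formula for A[j,r,t) and C(k,r,t)}, and this corollary is a clean specialization requiring only the two elementary product evaluations above.
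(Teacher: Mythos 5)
Your proposal is correct and is exactly the paper's argument: the corollary is obtained by setting $r=1$ in Theorem \ref{summation formula for A[j,r,t) and C(k,r,t)}, using the identifications $A(j,1,t)=s(j,t)/j!$ and $C(k,1,t)=a(k,t)$ noted just before that theorem, and evaluating the two products (the factor $(r-1)$ kills the first, and $(-1)(-2)\cdots(-(k-1))$ gives $1$ in the second). Your remark about the $j=1$ edge case (where the empty product gives $s(1,1)=1$, so the vanishing really holds only for $j\ge 2$) is a fair observation about a small imprecision in the corollary's statement, not a gap in your argument.
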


\subsection{A generalization of identities of Gupta-Kumar and Andrews-Crippa-Simon }

Now we state a one variable generalization of Gupta-Kumar's identity  \eqref{guptakumar polynomial}.  As a result,  we obtain a two variable generalization of the identity \eqref{dilcher 2} of Andrews,  Crippa and Simon.  
%This function simultaneously generalizes divisor generating functions $K_{m,c}(q)$ and $S_{m}(q)$ defined in \eqref{K function} and \eqref{dilcher notation for S_s}.
\begin{theorem} \label{polynomial version generalization of gupta-kumar}
Let $m \in \mathbb{N}\cup \{0\}$.  Let $a,  c \in \mathbb{C}$ such that $ |ac| <1,  |c q|<1$,  we define $\mathfrak{S}_{m, a, c}(q)$ by
\begin{align} \label{new generalization of divisor function of gupta and kumar}
\mathfrak{S}_{m,a,c}(q):=S_{m,c}(q) - R_{m,ac}(q),  
\end{align}
where
\begin{align}
S_{m,c}(q)& =\sum_{n=1}^\infty \frac{n^m c^n q^n}{1-q^n} = \sum_{n=1}^\infty \sigma_{m,c}(n) q^n, \label{define S_{m,c}} \\ 
 R_{m,ac}(q)& = \textup{Li}_{-m}\left(a c\right)+ \sum_{n=1}^\infty \frac{n^ma^nc^n q^n }{1-q^n}=  \textup{Li}_{-m}\left(a c\right)+  \sum_{n=1}^\infty \sigma_{m,ac}(n) q^n.  \label{define R_{m,ac}}
\end{align}
%Let $\mathfrak{S}_{s,a,c}(q)$ be the function defined as  then for $|c|<1,~|a|<1,~ |ac|<1$ and 
For $k\in \mathbb{N}$,  there exists a polynomial $P_k(x_1, x_2,\dots, x_k)$ with rational coefficients, such that
\begin{align}\label{two variable_Andrews_Crippa_Simon}
\sum_{n=1}^\infty \frac{(q/a)_{n} a^{n}}{(1-cq^n)^{k+1} (q)_{n-1}}=-\frac{1}{c}\frac{(q)_\infty(ac)_\infty}{(cq)_\infty (a)_\infty} P_{k}(\mathfrak{S}_{0,a,c}(q),\mathfrak{S}_{1,a,c}(q), \dots, \mathfrak{S}_{k-1,a,c}(q)).
\end{align}
\end{theorem}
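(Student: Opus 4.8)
The plan is to write $H_k := \sum_{n=1}^\infty \frac{(q/a)_n a^n}{(1-cq^n)^{k+1}(q)_{n-1}}$ for the left-hand side and $\Psi := \frac{(q)_\infty (ac)_\infty}{(cq)_\infty (a)_\infty}$ for the $q$-product prefactor, so that the claim reads $H_k = -\tfrac1c \Psi\, P_k(\mathfrak{S}_{0,a,c},\dots,\mathfrak{S}_{k-1,a,c})$, and to prove it by induction on $k$ driven by differentiation in $c$. First I would record two derivative formulas,
\begin{align*}
\partial_c \Psi = \frac{1}{c}\,\mathfrak{S}_{0,a,c}\,\Psi, \qquad \partial_c \mathfrak{S}_{m,a,c} = \frac{1}{c}\,\mathfrak{S}_{m+1,a,c}.
\end{align*}
The second is transparent once one observes $\mathfrak{S}_{m,a,c} = (c\partial_c)^m \mathfrak{S}_{0,a,c}$ from \eqref{define S_{m,c}}--\eqref{define R_{m,ac}}, which also shows the polylogarithm is transported correctly since $\textup{Li}_{-m}(ac) = (c\partial_c)^m \textup{Li}_{0}(ac)$. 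For the first, I would compute $\partial_c\log\Psi = \partial_c\log(ac)_\infty - \partial_c\log(cq)_\infty$; here $(cq)_\infty$ yields $-\tfrac1c S_{0,c}$, while $(ac)_\infty$ yields $-\tfrac1c\sum_{l\ge1}\frac{(ac)^l}{1-q^l} = -\tfrac1c R_{0,ac}$, the term $\textup{Li}_0(ac)=\frac{ac}{1-ac}$ arising precisely from the geometric piece $\sum_{l\ge1}(ac)^l$ of $\frac{1}{1-q^l}=1+\frac{q^l}{1-q^l}$.

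Next I would establish the recursion in $k$. Differentiating termwise gives $\partial_c H_k = (k+1)\sum_n \frac{(q/a)_n a^n q^n}{(1-cq^n)^{k+2}(q)_{n-1}}$, and the partial-fraction identity $\frac{q^n}{(1-cq^n)^{k+2}} = \frac1c\big(\frac{1}{(1-cq^n)^{k+2}}-\frac{1}{(1-cq^n)^{k+1}}\big)$ collapses this to
\begin{align*}
H_{k+1} = H_k + \frac{c}{k+1}\,\partial_c H_k .
\end{align*}
For the base case I would evaluate the exponent-one sum $H_0$ in closed form: writing $\frac{1}{(q)_{n-1}}=\frac{1-q^n}{(q)_n}$, splitting $1-q^n=(1-cq^n)-(1-c)q^n$, and feeding the pieces through Corollary \ref{2-var of RR} with $\alpha=1,\,b=q$ together with the $q$-binomial evaluations $\sum_{n\ge0}\frac{(q/a)_na^n}{(q)_n}=\frac{(q)_\infty}{(a)_\infty}$ and $\sum_{n\ge0}\frac{(a)_nc^n}{(q)_n}=\frac{(ac)_\infty}{(c)_\infty}$, everything telescopes to $H_0 = \frac{(q)_\infty}{c(a)_\infty}\big(1-\frac{(ac)_\infty}{(cq)_\infty}\big) = \frac1c\big(\frac{(q)_\infty}{(a)_\infty}-\Psi\big)$. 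Since $\frac{(q)_\infty}{(a)_\infty}$ is free of $c$, the $k=0$ instance of the recursion gives $H_1 = \frac{d}{dc}(cH_0) = -\partial_c\Psi = -\tfrac1c\Psi\,\mathfrak{S}_{0,a,c}$, i.e. $P_1(x_1)=x_1$.

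With the base case in hand, I would run the induction: assuming $H_k=-\tfrac1c\Psi P_k$ with $P_k=P_k(\mathfrak{S}_{0,a,c},\dots,\mathfrak{S}_{k-1,a,c})$, substitute the two derivative formulas into the recursion and collect terms. Writing $x_j$ for the slot carrying $\mathfrak{S}_{j-1,a,c}$, a short computation gives $H_{k+1}=-\tfrac1c\Psi P_{k+1}$ with
\begin{align*}
P_{k+1} = \frac{k}{k+1}P_k + \frac{1}{k+1}\Big(x_1 P_k + \sum_{j=1}^{k}\frac{\partial P_k}{\partial x_j}\,x_{j+1}\Big),
\end{align*}
which is again a polynomial in $x_1,\dots,x_{k+1}$ with rational coefficients; this closes the induction and produces $P_k$ explicitly (for instance $P_1=x_1$ yields $P_2=\tfrac12(x_1+x_1^2+x_2)$, matching the Andrews--Crippa--Simon polynomial as $a\to0,\ c\to1$).

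I expect the main obstacle to be twofold. First, the base-case evaluation of $H_0$: recognizing that the Corollary \ref{2-var of RR} output $\sum_{n\ge0}c^n\frac{(q^{n+1})_\infty}{(aq^n)_\infty}$ resums via the $q$-binomial theorem into $\frac{(q)_\infty(ac)_\infty}{(a)_\infty(c)_\infty}$ and then telescopes, after the split above, exactly into $\frac1c(\frac{(q)_\infty}{(a)_\infty}-\Psi)$, requires careful handling of the $n=0$ boundary terms and of the factor $(c)_\infty=(1-c)(cq)_\infty$. Second, one must justify termwise differentiation in $c$ and the rearrangements of double sums; this is controlled by absolute and locally uniform convergence on the region $|ac|<1,\ |cq|<1$, which is exactly where the stated hypotheses enter.
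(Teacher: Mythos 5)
Your proposal is correct, and it reaches the theorem by a genuinely different route from the paper. The paper first applies Corollary \ref{2-var of RR} with $b=q^2$, $\alpha=k+1$ to turn the left-hand side into $\frac{(q)_\infty}{(a/q)_\infty}\sum_n c^{n-1}\binom{k+n-1}{k}\frac{(a/q)_n}{(q)_n}$, then decomposes $\binom{k+n-1}{k}$ by Chu--Vandermonde, recognizes each piece via Lemma \ref{lemma 5} as $\frac{1}{r!}\frac{d^r}{dx^r}\big[\frac{(xac)_\infty}{(xcq)_\infty}\big]_{x=1}$, and finishes with Lemma \ref{lemma 6} (logarithmic differentiation in the auxiliary variable $x$, producing the polynomials $N_i$ in $T_{r,a,c}$) and Lemma \ref{lemma 7} (a Stirling-number computation converting $T_{r,a,c}(1,q)$ into rational combinations of the $\mathfrak{S}_{h,a,c}$). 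You instead differentiate in $c$ itself: the partial-fraction recursion $H_{k+1}=H_k+\frac{c}{k+1}\partial_cH_k$, the closed form $H_0=\frac1c\big(\frac{(q)_\infty}{(a)_\infty}-\Psi\big)$ (which I checked, including the $n=0$ boundary term $\frac{1}{1-c}$ that makes the constants cancel), and the two structural identities $\partial_c\log\Psi=\frac1c\mathfrak{S}_{0,a,c}$ and $c\,\partial_c\mathfrak{S}_{m,a,c}=\mathfrak{S}_{m+1,a,c}$ then close the induction. Your route buys an explicit recursion $P_{k+1}=\frac{k}{k+1}P_k+\frac{1}{k+1}\big(x_1P_k+\sum_j\partial_{x_j}P_k\cdot x_{j+1}\big)$, which the paper's existence argument does not supply, and it bypasses the Stirling-number bookkeeping of Lemma \ref{lemma 7} entirely; the paper's route has the advantage of flowing from its general Theorem \ref{General F function} machinery and of keeping $c$ fixed throughout. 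The only points needing care in your write-up are the ones you already flag: justifying termwise differentiation (locally uniform absolute convergence for $|a|<1$, $|cq|<1$, $c$ away from $q^{-n}$) and, if you want your $P_k$ to be literally the Andrews--Crippa--Simon polynomial rather than merely some rational-coefficient polynomial, a one-line check that your recursion degenerates to theirs as $a\to0$, $c\to1$.
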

Substituting $c=1$ in the above theorem and upon simplification,  one can immediately recover the identity \eqref{guptakumar polynomial} of Gupta and Kumar.   Furthermore, letting $a \rightarrow 0$ in \eqref{two variable_Andrews_Crippa_Simon} gives us a new one variable generalization of the identity \eqref{dilcher 2} of Andrews, Crippa and Simon.
\begin{corollary}
For $|c|<1$ and $k\in \mathbb{N}$, there exists a polynomial $P_k(x_1, x_2,\dots, x_k)$ with rational coefficients, such that
\begin{align*}
\sum_{n=1}^\infty \frac{(-1)^{n-1} q^{n+1 \choose 2}}{(1-cq^n)^{k+1} (q)_{n-1}}=\frac{1}{c}\frac{(q)_\infty}{(cq)_\infty } P_{k}(S_{0,c}(q),S_{1,c}(q), \dots, S_{k-1,c}(q)).
\end{align*}
\end{corollary}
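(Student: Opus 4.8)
The plan is to derive this corollary directly by letting $a \to 0$ in Theorem \ref{polynomial version generalization of gupta-kumar}, so the whole argument reduces to identifying the limits of the two sides of \eqref{two variable_Andrews_Crippa_Simon} and justifying that the limit may be taken termwise. I would first treat the left-hand side. Writing the $q$-Pochhammer symbol as a product, $a^n (q/a)_n = \prod_{j=1}^n a\,(1 - q^j/a) = \prod_{j=1}^n (a - q^j)$, which tends to $\prod_{j=1}^n (-q^j) = (-1)^n q^{n(n+1)/2} = (-1)^n q^{\binom{n+1}{2}}$ as $a \to 0$. Since $(-1)^n = -(-1)^{n-1}$, the $n$-th summand converges to $-\frac{(-1)^{n-1} q^{\binom{n+1}{2}}}{(1-cq^n)^{k+1}(q)_{n-1}}$, so the entire left-hand side tends to $-\sum_{n=1}^\infty \frac{(-1)^{n-1} q^{\binom{n+1}{2}}}{(1-cq^n)^{k+1}(q)_{n-1}}$, namely the negative of the sum appearing in the corollary.

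Next I would handle the right-hand side. The prefactor $\frac{(q)_\infty (ac)_\infty}{(cq)_\infty (a)_\infty}$ tends to $\frac{(q)_\infty}{(cq)_\infty}$, since both $(ac)_\infty$ and $(a)_\infty$ approach $(0;q)_\infty = 1$. For the arguments of $P_k$, recall from \eqref{new generalization of divisor function of gupta and kumar} and \eqref{define R_{m,ac}} that $\mathfrak{S}_{m,a,c}(q) = S_{m,c}(q) - R_{m,ac}(q)$ with $R_{m,ac}(q) = \textup{Li}_{-m}(ac) + \sum_{n=1}^\infty \frac{n^m a^n c^n q^n}{1-q^n}$. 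Because $\textup{Li}_{-m}(z) = \sum_{\ell \ge 1} \ell^m z^\ell$ vanishes at $z=0$ and the displayed tail series vanishes as $a \to 0$, we get $R_{m,ac}(q) \to 0$, hence $\mathfrak{S}_{m,a,c}(q) \to S_{m,c}(q)$ for each $m$. As $P_k$ is a polynomial, it is continuous in its arguments, so the right-hand side converges to $-\frac{1}{c}\frac{(q)_\infty}{(cq)_\infty} P_k\big(S_{0,c}(q), \dots, S_{k-1,c}(q)\big)$. Equating the two limits and cancelling the common factor $-1$ yields exactly the claimed identity, so the bulk of the work is bookkeeping.

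The one genuinely non-routine point is justifying that $a \to 0$ may be passed inside the left-hand series, and here I would invoke dominated convergence. For fixed $|q| < 1$ and $a$ in a disc $|a| \le r$ with $r < 1$, one has $\prod_{j=1}^n |a - q^j| \le \prod_{j=1}^n (r + |q|^j) = r^n \prod_{j=1}^n (1 + |q|^j/r) \le C_r\, r^n$, where $C_r = \prod_{j \ge 1}(1 + |q|^j/r) < \infty$. Since $|(q)_{n-1}|^{-1}$ and $|1 - cq^n|^{-(k+1)}$ remain bounded as $n \to \infty$, the $n$-th summand is dominated by a constant multiple of $r^n$, giving a summable majorant independent of $a$. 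The convergence of the prefactor and of each $\mathfrak{S}_{m,a,c}(q)$ is immediate from absolute convergence for $|q| < 1$, and the continuity of $P_k$ closes the argument on the right-hand side. I expect this dominated-convergence verification to be the only step requiring care; the rest is the elementary product computation above.
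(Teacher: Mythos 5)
Your proposal is correct and follows the paper's route exactly: the paper obtains this corollary by letting $a \to 0$ in Theorem \ref{polynomial version generalization of gupta-kumar}, noting that $(q/a)_n a^n \to (-1)^n q^{\binom{n+1}{2}}$, that the prefactor $\tfrac{(ac)_\infty}{(a)_\infty} \to 1$, and that $\mathfrak{S}_{m,a,c}(q) \to S_{m,c}(q)$. Your dominated-convergence justification of the termwise limit is a welcome (and accurate) elaboration of a step the paper leaves implicit.
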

In particular when $c=1$,  one can easily verify that the above identity reduces to the identity \eqref{dilcher 2}.  

Now we state a {\it Ramanujan-type} expression for the identity \eqref{2nd gen_Uchimura by ABEM}. Let  $K_{m,c}(q)$ be the generalized divisor generating function defined in \eqref{K function} and $Y_n$ be the Bell polynomial defined in \eqref{define Bell polynomial}.  Let $A_n(x)$ be the Eulerian polynomial of degree $n$ defined as
\begin{align*}
\sum_{n=0}^\infty A_n(x)\frac{t^n}{n!}= \frac{(1-x)e^t}{e^{xt}-xe^t}.
%~~~ \textrm{for}~~~|t|<\pi.
\end{align*}
Then we have the following result.
\begin{theorem} \label{new expression to ABEM} 
For any non-negative integer $m$ and a complex number $c$, we have
\begin{align*}
\sum_{n=1}^\infty n^m c^n q^n(q^{n+1})_\infty = \sum_{n=1}^\infty \frac{(-1)^{n-1} q^{{n \choose 2}}(cq^n)A_m(cq^n)}{(1-cq^n)^{m+1} (q)_{n-1}}=\frac{(q)_\infty}{(cq)_\infty}Y_m(K_{1,c}(q), K_{2,c}(q), \cdots ,K_{m,c}(q)).
\end{align*}
\end{theorem}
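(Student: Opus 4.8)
The plan is to establish the two equalities in the statement separately. The equality between the Uchimura-type sum on the far left, namely $\sum_{n=1}^\infty n^m c^n q^n (q^{n+1})_\infty$, and the Bell-polynomial expression $\frac{(q)_\infty}{(cq)_\infty}Y_m(K_{1,c}(q),\dots,K_{m,c}(q))$ on the far right is exactly the identity \eqref{2nd gen_Uchimura by ABEM}, which is already available. Hence the genuinely new content lies in the middle, Ramanujan-type expression, and I would prove directly that it equals the Uchimura-type sum $\sum_{n=1}^\infty n^m c^n q^n (q^{n+1})_\infty$; chaining this with \eqref{2nd gen_Uchimura by ABEM} then finishes the theorem.

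The engine of the argument is the classical power-series identity for the Eulerian polynomials, which follows from their exponential generating function recorded just above the theorem: for $|x|<1$ and $m\ge 0$,
\begin{align*}
\sum_{k=1}^\infty k^m x^k = \frac{x\,A_m(x)}{(1-x)^{m+1}}.
\end{align*}
(The normalization is easily checked on $A_0(x)=A_1(x)=1$ and $A_2(x)=1+x$.) I would apply this with $x=cq^n$, which is legitimate since $|cq^n|\le |cq|<1$ for every $n\ge 1$, to replace the factor $(cq^n)A_m(cq^n)/(1-cq^n)^{m+1}$ in the middle sum by $\sum_{k=1}^\infty k^m c^k q^{nk}$. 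Substituting and interchanging the order of summation --- justified by absolute convergence when $|cq|<1$ --- yields
\begin{align*}
\sum_{n=1}^\infty \frac{(-1)^{n-1} q^{{n\choose 2}}(cq^n)A_m(cq^n)}{(1-cq^n)^{m+1}(q)_{n-1}}
= \sum_{k=1}^\infty k^m c^k \sum_{n=1}^\infty \frac{(-1)^{n-1} q^{{n\choose 2}+nk}}{(q)_{n-1}}.
\end{align*}

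It then remains to evaluate the inner sum over $n$, and here a single index shift does the job. Replacing $n$ by $n+1$ turns the exponent ${n\choose 2}+nk$ into ${n\choose 2}+n(k+1)+k$ and $(q)_{n-1}$ into $(q)_n$, so the inner sum becomes $q^k\sum_{n\ge 0}\frac{(-1)^n q^{{n\choose 2}}(q^{k+1})^n}{(q)_n}$. By Euler's identity $\sum_{n\ge 0}\frac{(-1)^n q^{{n\choose 2}}z^n}{(q)_n}=(z;q)_\infty$ taken at $z=q^{k+1}$, this collapses to $q^k(q^{k+1})_\infty$. Consequently the middle sum equals $\sum_{k=1}^\infty k^m c^k q^k (q^{k+1})_\infty$, which is precisely the Uchimura-type sum, as desired. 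The computation is short and essentially forced; the only steps requiring genuine care are confirming that the paper's Eulerian normalization produces the displayed power-series identity and justifying the interchange of the double summation, both routine under the standing hypothesis $|cq|<1$. I therefore do not expect a serious obstacle beyond this bookkeeping.
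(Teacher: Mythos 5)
Your proposal is correct. The chain of manipulations checks out: the Eulerian normalization $\sum_{k\ge 1}k^m x^k = xA_m(x)/(1-x)^{m+1}$ matches the paper's convention $\sum_{j\ge 0}(j+1)^m x^j = A_m(x)/(1-x)^{m+1}$ after the shift $k=j+1$, the interchange is justified for $|cq|<1$ (the same implicit restriction under which every side of the theorem converges), and the inner sum does collapse to $q^k(q^{k+1})_\infty$ by Euler's identity $\sum_{n\ge 0}(-1)^n q^{\binom{n}{2}}z^n/(q)_n=(z)_\infty$. Your treatment of the second equality (citing \eqref{2nd gen_Uchimura by ABEM}) is exactly what the paper does.

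The route to the first equality differs from the paper's in a way worth noting. The paper first derives the base identity $\sum_{n\ge 1}(-1)^{n-1}q^{\binom{n}{2}}(q)_{n-1}^{-1}\,cq^n/(1-cq^n)=\sum_{n\ge 1}c^nq^n(q^{n+1})_\infty$ from Corollary \ref{2-var of RR} (hence ultimately from the $q$-Gauss sum), then applies the operator $D=c\,\frac{d}{dc}$ a total of $m$ times and recognizes $D^m\bigl(cq^n/(1-cq^n)\bigr)$ as $cq^nA_m(cq^n)/(1-cq^n)^{m+1}$ via the same Eulerian identity. You instead start from the middle expression, expand the Eulerian factor as $\sum_k k^m(cq^n)^k$, swap the order of summation, and evaluate the inner $n$-sum directly with Euler's theorem. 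The two arguments are essentially dual --- your $\sum_k k^m c^kq^{nk}$ is precisely $D^m$ applied termwise to $\sum_k c^kq^{nk}$ --- but yours is slightly more self-contained: it bypasses both the differential operator and the appeal to Corollary \ref{2-var of RR}, needing only the classical Euler identity. The paper's version buys consistency with the rest of its framework, since Corollary \ref{2-var of RR} is the workhorse for several other results; your version buys a shorter, more elementary standalone proof.
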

Interestingly,  letting $m=1$ in the above theorem gives a {\it Uchimura-type} expression for the Ramanujan's identity \eqref{entry4}.  
\begin{corollary}\label{Entry 4 with new term}
For any complex number $c$, we found that
\begin{align}\label{Uchimura-type for Ramanujan's identity}
\frac{(cq)_\infty}{(q)_\infty}\sum_{n=1}^{\infty} n c^n q^n (q^{n+1})_{\infty} = \sum_{n=1}^{\infty} \frac{(-1)^{n-1} c^n q^{\frac{n(n+1)}{2}}}{(1-q^n) (cq)_n}=\sum_{n=1}^{\infty}\frac{c^n q^n }{1-q^n}. 
\end{align}
\end{corollary}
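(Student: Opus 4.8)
The plan is to obtain Corollary \ref{Entry 4 with new term} as the $m=1$ specialization of Theorem \ref{new expression to ABEM}, with the output fed into Ramanujan's already-established identity \eqref{entry4}. It helps to first recognize the distinct roles played by the three members of \eqref{Uchimura-type for Ramanujan's identity}: the middle Ramanujan-type sum and the rightmost divisor-type sum are precisely the two sides of \eqref{entry4}, so their equality is already in hand; the genuinely new content is the leftmost Uchimura-type expression, and this is exactly what Theorem \ref{new expression to ABEM} supplies. Thus the task reduces to showing that the $m=1$ case of Theorem \ref{new expression to ABEM} produces the leftmost expression and matches it to $\sum_{n=1}^\infty c^n q^n/(1-q^n)$.

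First I would put $m=1$ throughout Theorem \ref{new expression to ABEM}. The leftmost member becomes $\sum_{n=1}^\infty n c^n q^n (q^{n+1})_\infty$, which is the left side of \eqref{Uchimura-type for Ramanujan's identity}. For the rightmost member I would evaluate the Bell polynomial $Y_1$: the only partition of $1$ has $k_1=1$, so the definition \eqref{define Bell polynomial} collapses to $Y_1(u_1)=u_1$, giving $Y_1(K_{1,c}(q))=K_{1,c}(q)$. Next I would identify $K_{1,c}(q)$ explicitly. From \eqref{K function} together with \eqref{define S_{m,c}} one has $K_{1,c}(q)=\sum_{n=1}^\infty \sigma_{0,c}(n)q^n = S_{0,c}(q)=\sum_{n=1}^\infty c^n q^n/(1-q^n)$. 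Consequently the equality of the leftmost and rightmost members of Theorem \ref{new expression to ABEM} reads $\sum_{n=1}^\infty n c^n q^n (q^{n+1})_\infty = \frac{(q)_\infty}{(cq)_\infty}\sum_{n=1}^\infty c^n q^n/(1-q^n)$, and multiplying through by $(cq)_\infty/(q)_\infty$ yields the equality of the first and third expressions of \eqref{Uchimura-type for Ramanujan's identity}.

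Finally I would invoke Ramanujan's identity \eqref{entry4} to rewrite $\sum_{n=1}^\infty c^n q^n/(1-q^n)$ as the Ramanujan-type sum $\sum_{n=1}^\infty (-1)^{n-1} c^n q^{n(n+1)/2}/\big((1-q^n)(cq)_n\big)$, thereby inserting the middle expression and closing the chain of equalities. I do not expect a serious obstacle here, since the argument is a single specialization plus a routine simplification of $Y_1$ and an identification of $K_{1,c}$. The one point demanding care is that the proof should be routed through the Bell-polynomial (rightmost) form of Theorem \ref{new expression to ABEM} rather than its Eulerian-polynomial (middle) form: although the first Eulerian polynomial satisfies $A_1(x)=1$ (the generating function reduces to the standard $\frac{x-1}{x-e^{(x-1)t}}$), the middle series then becomes $\sum_{n=1}^\infty (-1)^{n-1} q^{\binom{n}{2}}(cq^n)/\big((1-cq^n)^2 (q)_{n-1}\big)$, which is \emph{not} term-by-term the Ramanujan-type sum in \eqref{Uchimura-type for Ramanujan's identity}; matching it directly would demand an additional nontrivial transformation that the divisor-type route via $K_{1,c}$ and \eqref{entry4} sidesteps entirely.
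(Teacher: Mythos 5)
Your proof is correct, but it takes a genuinely different route from the paper's. You extract only the first and third members of Theorem \ref{new expression to ABEM} at $m=1$ (via $Y_1(K_{1,c})=K_{1,c}=\sum_{n\ge 1}c^nq^n/(1-q^n)$), and then splice in the middle Ramanujan-type sum by citing Ramanujan's identity \eqref{entry4} as known. The paper instead keeps all three members of the theorem and does the extra work you deliberately avoid: starting from the Eulerian-polynomial middle term $\sum_{n\ge1}(-1)^{n-1}q^{\binom n2}cq^n A_1(cq^n)/\bigl((1-cq^n)^2(q)_{n-1}\bigr)$, it writes that series as a limiting ${}_3\phi_2$ and applies the transformation \eqref{3_phi_2} to convert it into $\sum_{n\ge1}(-1)^{n-1}c^nq^{n(n+1)/2}/\bigl((1-q^n)(cq)_n\bigr)$ directly. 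Your final remark correctly identifies that this matching is nontrivial; the paper confronts it rather than sidestepping it. What each buys: your argument is shorter and uses only the Bell-polynomial side plus an external classical input, while the paper's argument is self-contained relative to Theorem \ref{new expression to ABEM} and in effect re-derives \eqref{entry4} (the middle-equals-right equality) as a by-product instead of assuming it. Both are legitimate; since \eqref{entry4} is a well-established identity of Ramanujan with published proofs, invoking it costs nothing logically, though it makes the corollary less of an independent confirmation of that identity.
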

Substituting $c=1$ in \eqref{Uchimura-type for Ramanujan's identity} reduces to Uchimura's identity \eqref{Uchimura}.  Further,  putting $c=1$ in Theorem \ref{new expression to ABEM} produces a {\it Ramanujan-type} expression for the Uchimura's identity \eqref{uchimura ideantity polynomial version} which seems to be a new expression to the best of our knowledge. 

\begin{corollary} \label{Uchimura's identity with new expression}
For any non-negative integer $m$, we have the following expression
\begin{align*}
\sum_{n=1}^\infty n^m q^n(q^{n+1})_\infty = \sum_{n=1}^\infty \frac{(-1)^{n-1} q^{{n \choose 2}+n}A_m(q^n)}{(1-q^n)^m (q)_n}=Y_m(K_1(q), K_2(q), \dots ,K_m(q)).
\end{align*}
\end{corollary}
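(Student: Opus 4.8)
The plan is to derive this corollary directly from Theorem \ref{new expression to ABEM} by specializing the free parameter to $c=1$; no independent argument is required, so the work is entirely a matter of tracking how each of the three members simplifies under this substitution. First I would treat the leftmost (Uchimura-type) member: putting $c=1$ turns $\sum_{n=1}^\infty n^m c^n q^n (q^{n+1})_\infty$ into $\sum_{n=1}^\infty n^m q^n (q^{n+1})_\infty$, which is already the desired expression with nothing further to do.

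The only member that needs genuine algebraic manipulation is the middle (Ramanujan-type) one, and the key input is the elementary factorization $(q)_n = (1-q^n)(q)_{n-1}$. Setting $c=1$ turns the numerator factor $(cq^n)A_m(cq^n)$ into $q^n A_m(q^n)$; I would then combine $q^{\binom{n}{2}}\cdot q^n = q^{\binom{n}{2}+n}$ in the numerator and absorb one factor $(1-q^n)$ from the denominator $(1-q^n)^{m+1}$ into $(q)_{n-1}$, which by the factorization above produces $(1-q^n)^m (q)_n$. This yields precisely $\sum_{n=1}^\infty \frac{(-1)^{n-1} q^{\binom{n}{2}+n} A_m(q^n)}{(1-q^n)^m (q)_n}$, matching the middle expression of the corollary.

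For the rightmost (divisor-type) member I would note two facts. The prefactor $(q)_\infty/(cq)_\infty$ equals $1$ at $c=1$ since $(cq)_\infty = (q)_\infty$; and the generating functions specialize as $K_{m,1}(q)=K_m(q)$, because $\sigma_{m,1}(n)=\sum_{d\mid n} d^m\, 1^{d} = \sigma_m(n)$, so the definitions in \eqref{K function} collapse to those in \eqref{M_m and K_(m+1)}. Hence $\frac{(q)_\infty}{(cq)_\infty}Y_m(K_{1,c},\dots,K_{m,c})$ reduces to $Y_m(K_1,\dots,K_m)$. The main (and really the only) obstacle here is bookkeeping: confirming the exponent arithmetic and the $(q)_n = (1-q^n)(q)_{n-1}$ absorption in the middle term, and checking that the parameter specialization of $K_{m,c}$ agrees with the definition of $K_m$. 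As all of these steps are routine, the corollary follows at once.
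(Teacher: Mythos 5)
Your proposal is correct and matches the paper's proof exactly: the paper also obtains this corollary by substituting $c=1$ in Theorem \ref{new expression to ABEM}, and your bookkeeping (the absorption $(1-q^n)^{m+1}(q)_{n-1}=(1-q^n)^m(q)_n$, the exponent combination $q^{\binom{n}{2}}\cdot q^n=q^{\binom{n}{2}+n}$, and the collapse of the prefactor and of $K_{m,c}$ to $K_m$) is precisely the routine simplification the paper leaves implicit.
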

Letting $m=1$ in the above corollary and upon simplification,  it yields Uchimura's identity \eqref{Uchimura}.

\section{Applications in probability theory and random graph theory}\label{application}
\subsection{Application of Uchimura's work in probability theory}
 Uchimura \cite{uchimura87} utilized his identity \eqref{uchimura ideantity polynomial version} to give applications in probability theory that can be used in the analysis of data structures called {\it heaps}.  Here we briefly discuss his results.  Uchimura \cite[p.~76]{uchimura87} considered a random variable $X$ such that,  for any $n \in \mathbb{N}\cup \{0\}$ and $0 <q<1$, 
\begin{align}\label{random variable_Uchimura}
\textup{Pr}(X=n)= q^n (q^{n+1})_\infty.
\end{align}
Then the probability generating function is given by 
\begin{align}\label{Probability gen func}
G(x,  q) = \sum_{n=0}^\infty x^n q^n (q^{n+1})_\infty.   
\end{align}
Moreover,  for any  $m \in \mathbb{N}$,  the $m$-th moment is given by
 $$
 E(X^m) = \sum_{n=0}^\infty n^m q^n (q^{n+1})_\infty, 
 $$
which is nothing but the function $M_m$ defined in \eqref{M_m and K_(m+1)}.  Therefore,  utilizing \eqref{uchimura ideantity polynomial version},  we have
$$
E(X^m) = M_m = Y_m (K_1, \dots, K_m).
$$
In particular,  for $m=1$,  it yields
\begin{align}\label{expection_Uchimura}
E(X)=M_1  = \sum_{n=1}^\infty d(n) q^n.
\end{align}
%follows from the identity \eqref{uchimura ideantity polynomial version}.
Further,   in the proof of \eqref{uchimura ideantity polynomial version},  Uchimura \cite[p.~74-75]{uchimura87} showed that,  if one writes
\begin{align*}
\log G(e^t,  q) = \sum_{r=1}^\infty h_m \frac{t^m}{m!},
\end{align*}
where the $m$-th coefficient $h_m$ is known as the $m$-th cumulant,  then 
\begin{align}\label{mth cumulant_Uchimura}
h_m = K_m = \sum_{n=1}^\infty \sigma_{m-1} q^n.
\end{align}
We know that the $2$nd cumulant $h_2$ is equals to $\text{Var}(X)$.  Hence,  taking $m=2$,  we have
\begin{align}\label{Variance_Uchimura}
\text{Var}(X) = \sum_{n=1}^\infty \sigma(n) q^n.
\end{align}
Uchimura \cite{uchimura87} further explained a combinatorial interpretation 
of the probability generation function $G(x,  q)$ as defined in \eqref{Probability gen func} and gave immediate applications in the analysis of the data structures  known as heaps.  Mainly,  he studied the average number of exchanges required to  insert an element into a heap.  Interested readers can see 
\cite[Section 3]{uchimura87} for more details. 

\subsection{Works of Simon-Crippa-Collenberg and Andrews-Crippa-Simon}
The divisor generating function appearing in  \eqref{expection_Uchimura} surprisingly also occured in the work of Simon,  Crippa and Collenberg \cite{SCC1993}  in connection with random acyclic digraphs.  To know more about random acyclic digraphs,  see \cite{Palmer85}.  Let the $G_{n,p}$-model of a random acyclic digraph to be the vertex set $V=\{1,  2,  \cdots,  n\}$ and the probability of appearing a directed edge $(i,j)$,  with $1\leq i <j \leq n$,  be $p \in (0,1)$.  
Now consider a random variable $\gamma_{n}^{*}(1)$ that counts the number of vertices that can be connected by a directed path starting from the vertex $1$ (including $1$ itself).  Simon et al.  \cite{SCC1993}   proved that,  for $1 \leq h \leq n$,  
\begin{align}\label{Prob}
\text{Pr}(\gamma_{n}^{*}(1) = h ) = q^{n-h} \prod_{j=1}^{h-1} \left(  1- q^{n-j} \right),
\end{align}
where $q= 1-p$.  In the same paper,  they proved the following interesting result, 
\begin{align}\label{Expectation_Andrews}
\lim_{n \rightarrow \infty} \left( n - E(\gamma_{n}^{*}(1)\right) = \sum_{n=1}^\infty d(n) q^n.
\end{align}
The above identity was further extended by Andrews,  Crippa and Simon  \cite{andrewssiam1997} and they proved that 
\begin{align}\label{Variance_Andrews}
\lim_{n \rightarrow \infty} \text{Var}(\gamma_{n}^{*}(1)) = \sum_{n=1}^\infty \sigma(n) q^n. 
\end{align}
Comparing \eqref{Variance_Uchimura} and \eqref{Variance_Andrews},  one can clearly see that 
$$ \text{Var}(X) = \lim_{n \rightarrow \infty} \text{Var}(\gamma_{n}^{*}(1)),$$ where $X$ is the random variable studied by Uchimura,  defined in \eqref{random variable_Uchimura}.  Here we recall that the generalized divisor generating function \eqref{mth cumulant_Uchimura} is same as the $m$th cumulant $h_m$ with respect to the random variable $X$.  Thus,  a subsequent question raises is that `what will be the interpretation of the generalized divisor generating function \eqref{mth cumulant_Uchimura} with respect to the random variable $\gamma_{n}^{*}(1)$ studied by Simon,  Crippa and Collenberg?'

Around the same time,  Andrews, Crippa, and Simon \cite[Theorem 3.1]{andrewssiam1997} proved a more general result using $q$-series techniques. 
Their result is as follows.  

Let $\{t_n(q)\}$ be a sequence of polynomials satisfying the following recurrence relation,  for $n \geq 1$,  
\begin{align}\label{recurrence_Andrews}
t_n(q)=f(n)+(1-q^{n-1})t_{n-1}(q),~t_0(q)=0,
\end{align}
where $f(n) = \sum_{k\geq 0} c_kn^k \in \mathbb{Q}[n]$ be  a non-zero polynomial.  
Let us recall the polynomial $P_k(q):= P_k\big(S_0(q), S_1(q),\cdots S_{k-1}(q)\big)$ defined in \eqref{dilcher 2}.  Then we have 
\begin{align}\label{andrews crippa simmos's result with limit}
\lim_{n \rightarrow \i} \left( \sum_{i=1}^n f(i)-t_n(q) \right)=\sum_{j\geq 1}h_j P_j(q), 
\end{align}
where coefficients $h_j$ are given by
\begin{align}\label{definition h_j}
h_1=c_0, ~\text{and} ~ \text{for}~j\geq 2, ~ h_j=\sum_{i\geq j-1} (-1)^{i-j+1} {i-1 \choose j-2} i! \sum_{k\geq i} c_k \tilde{s}(k,i),~ 
\end{align}
where $\tilde{s}(k,i)$ denotes the Stirling numbers of the second kind.  

Now letting $t_n(q) :=  E\left(\gamma_{n}^{*}(1)\right)$, they observed that $t_n(q)$ satisfies the recurrence relation \eqref{recurrence_Andrews} with $f(n)=1$ for all $n$.  Thus,  utilizing \eqref{andrews crippa simmos's result with limit},  one can easily obtain \eqref{Expectation_Andrews}.  Further,  as an application of \eqref{andrews crippa simmos's result with limit},  Andrews et al.  also obtained \eqref{Variance_Andrews},  see \cite[Theorem 4.2]{andrewssiam1997}.   

In the same paper,  Andrews,  Crippa and Simon conjectured that,  if $f(n)=(-1)^n$,  then 
\begin{align}\label{Conjecture_ACS}
\lim_{n \rightarrow \infty} \left( \sum_{1 \leq i  \leq n} f(i) - t_n(q)   \right) = \sum_{n=1}^\infty (-1)^n q^{n^2}.  
\end{align}
This conjecture was settled by Bringmann and Jennings-Shaffer \cite{Bringmann}.   Further,  they proved the following result,  for $f(n)= b^n$,  $ b\neq 1$.  
\begin{theorem}
Let $b \in \mathbb{C} \backslash \{1\}$ and $f(n)=b^n$.  Let $t_n(q)$ be a sequence of polynomials defined as in \eqref{recurrence_Andrews}.  Then for $|q| < \min \left(1,  |b|^{-1}  \right)$,   we have
\begin{align*}
\lim_{n \rightarrow \infty} \left( \sum_{1 \leq  i \leq n} b^i - t_n(q)   \right) = \frac{b}{1-b} - \frac{b(q)_\infty}{(b)_\infty}.
\end{align*}
\end{theorem}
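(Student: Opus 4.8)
The plan is to solve the recurrence \eqref{recurrence_Andrews} in closed form, pass to the limit term by term, and then recognise the resulting $q$-series.

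First I would unfold the recurrence $t_n(q)=b^n+(1-q^{n-1})t_{n-1}(q)$ with $t_0(q)=0$ and confirm by induction on $n$ that
\begin{align*}
t_n(q)=\sum_{k=1}^n b^k \prod_{j=k}^{n-1}\left(1-q^j\right),
\end{align*}
where the empty product (the $k=n$ term) equals $1$. Subtracting this from $\sum_{i=1}^n b^i$ collapses the expression to the clean form
\begin{align*}
\sum_{1\le i\le n} b^i - t_n(q)=\sum_{k=1}^n b^k\left(1-\prod_{j=k}^{n-1}\left(1-q^j\right)\right).
\end{align*}

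Next I would let $n\to\infty$. For each fixed $k$ one has $\prod_{j=k}^{n-1}(1-q^j)\to \prod_{j=k}^{\infty}(1-q^j)=(q)_\infty/(q)_{k-1}$. To interchange the limit with the summation I would invoke Tannery's theorem (dominated convergence for series): using the elementary bound $\left|1-\prod_{j=k}^{n-1}(1-q^j)\right|\le \prod_{j=k}^{\infty}(1+|q|^j)-1=O(|q|^k)$, the $k$-th summand is dominated, uniformly in $n$, by a constant times $|bq|^k$, and $\sum_k |bq|^k<\infty$ precisely because the hypothesis $|q|<\min(1,|b|^{-1})$ forces $|bq|<1$. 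This yields
\begin{align*}
\lim_{n\to\infty}\left(\sum_{1\le i\le n} b^i - t_n(q)\right)=\sum_{k=1}^\infty b^k\left(1-\frac{(q)_\infty}{(q)_{k-1}}\right),
\end{align*}
a series whose terms are analytic in $b$ and which converges locally uniformly on $|b|<|q|^{-1}$, so the limit is an analytic function of $b$ there.

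Finally I would evaluate the series, first on the disc $|b|<1$ where the two pieces split convergently. The first piece is $\sum_{k\ge1}b^k=b/(1-b)$; the second is $(q)_\infty\sum_{k\ge1}b^k/(q)_{k-1}$, and after writing $m=k-1$ and applying Euler's identity $\sum_{m\ge0}b^m/(q)_m=1/(b)_\infty$ (the $b=c=0$ specialisation of Corollary \ref{2-var of RR}) this equals $b(q)_\infty/(b)_\infty$, giving the claimed value $b/(1-b)-b(q)_\infty/(b)_\infty$ for $|b|<1$. To reach all $b\neq1$ with $|q|<\min(1,|b|^{-1})$ I would argue by analytic continuation: the right-hand side is meromorphic on $|b|<|q|^{-1}$ with its only candidate pole at $b=1$, and since $(b)_\infty=(1-b)(bq)_\infty$ the two $1/(1-b)$ singularities cancel, so that point is removable; as the left-hand limit and this function are both analytic on $|b|<|q|^{-1}$ and agree on $|b|<1$, the identity theorem forces agreement everywhere. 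The main obstacle is exactly the regime $|b|\ge1$: there the individual series $\sum b^k$ and $\sum b^k(q)_\infty/(q)_{k-1}$ each diverge, so the term-by-term splitting used for $|b|<1$ is unavailable, and one must instead secure convergence of the combined series through the $|bq|^k$ bound and then transport the closed form across the annulus by analytic continuation.
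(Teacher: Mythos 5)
Your argument is correct and complete. One point of context first: the paper does not actually prove this statement --- it is quoted verbatim from Bringmann and Jennings-Shaffer \cite{Bringmann} as background for Section \ref{application}, and no proof appears in Section \ref{Proofs}. So the only meaningful comparison is with the method the paper (following \cite{andrewssiam1997,Bringmann,GK21}) uses for the adjacent results, Theorems \ref{limit in polynomial form} and \ref{limit formula for periodic sequence}: there one forms the generating function $T(a,\alpha,q)=\sum_n t_n(a,q)\alpha^n$, iterates the functional equation it satisfies, specializes $\alpha=q$ to get $\lim(\cdots)=-\sum_n (q/a)_nF(q^n)(a/q)^n/(q)_n$, and then evaluates the resulting $q$-series via transformations such as the $q$-Gauss sum \eqref{q-Gauss sum}. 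Your route is genuinely different and more elementary: you solve the recurrence \eqref{recurrence_Andrews} in closed form, so that $\sum_{i\le n}b^i-t_n(q)=\sum_{k=1}^n b^k(1-\prod_{j=k}^{n-1}(1-q^j))$, pass to the limit by Tannery's theorem using the standard bound $|1-\prod_{j\ge k}(1-q^j)|\le\prod_{j\ge k}(1+|q|^j)-1=O(|q|^k)$ (which is exactly where the hypothesis $|bq|<1$ enters), evaluate the split series for $|b|<1$ via Euler's identity $\sum_{m\ge0}b^m/(q)_m=1/(b)_\infty$, and extend to $1\le|b|<|q|^{-1}$ by analytic continuation after observing that $(b)_\infty=(1-b)(bq)_\infty$ makes the apparent pole at $b=1$ removable and that $(b)_\infty$ has no other zeros in that disc. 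All of these steps check out (in particular the uniform-in-$n$ domination and the local uniform convergence of $\sum_k b^k\,O(|q|^k)$ on $|b|<|q|^{-1}$). What your approach buys is transparency and minimal machinery --- no ${}_2\phi_1$ transformations are needed, and the role of the hypothesis $|q|<\min(1,|b|^{-1})$ is laid bare; what the paper's generating-function framework buys is uniformity, since the same pipeline handles polynomial $f$, periodic $f$, and the extra parameter $a$ simultaneously. One cosmetic remark: citing Euler's identity as the ``$b=c=0$ specialisation of Corollary \ref{2-var of RR}'' reuses the letter $b$ in two conflicting roles; it is cleaner to invoke the $q$-binomial theorem \eqref{q binomial thm} with $A=0$, $z=b$.
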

Letting $b=-1$ in the above theorem gives \eqref{Conjecture_ACS}.  
Further,  Bringmann and Jennings-Shaffer proved that,  if $f(n)$ is a periodic function with period $N$ and $t_n(q)$ be the sequence of polynomials defined as in \eqref{recurrence_Andrews}.   Let $\zeta_N= e^{\frac{2\pi i}{N}}$.  Then we have
\begin{align}\label{Bringmann and Jennings-Shaffer's result with limit}
\lim_{n \rightarrow \infty} \left( \sum_{1 \leq  i \leq n} f(i) - t_n(q)   \right) =  c_0 S_0(q) - (q)_\infty \sum_{k=1}^{N-1} \frac{c_k}{(\zeta_N^k)_\infty} + \sum_{k=1}^{N-1} \frac{c_k}{1-\zeta_N^k},
\end{align}
where $$c_k = \frac{1}{N} \sum_{j=1}^N f(j) \zeta_N^{(1-j)k}.$$ 
In a recent development, Gupta and Kumar \cite[Theorem 1.11]{GK21} attempted to extend the above result \eqref{Bringmann and Jennings-Shaffer's result with limit}.  However, they made a small error.  In this paper, we rectify it and present a valid generalization of the aforementioned result.

First,  we present a one variable generalization of the identity \eqref{andrews crippa simmos's result with limit} of Andrews,  Crippa and Simon.   Consider the sequence of polynomials $t_n(a,q)$ depending on $a$ and $q$, defined as 
\begin{align}\label{define t_n}
t_n(a,q)=\frac{f(n)-af(n+1)}{1-aq^n} + \left(\frac{1-q^{n-1}}{1-aq^n}\right)t_{n-1}(a,q),~n\geq 1~\text{and}~t_0(a,q)=0,
\end{align}
where $f(n) = \sum_{k\geq 0} c_kn^k \in \mathbb{Q}[n]$ be a polynomial in $n$ with $af(1)=0$.
\begin{theorem}\label{limit in polynomial form}
Let $t_n(a,q)$ be the sequence defined in \eqref{define t_n}.  Then we have
\begin{align*}
\lim_{\ell \rightarrow \infty} \left( \sum_{n=1}^\ell f(n) - \frac{a}{1-a}f(\ell+1) - \left(\frac{1-aq^\ell}{1-a}\right) t_\ell(a,q) \right) = \sum_{j=1}^\infty h_j P_j(a,  q),
\end{align*}
where the  constant $h_j$ is same as in \eqref{definition h_j} 
%\begin{align*}
%\text{for}~j\geq 2,~ h_j=\sum_{i\geq j-1} (-1)^{i-j+1} {i-1 \choose j-2} i! \sum_{k\geq i} c_k \tilde{s}(k,i);~h_1=c_0,
%\end{align*}
%and $\tilde{s}(k,i)$ are Stirling number of the second kind
 and 
$P_j(a, q)$
%$$P_k(a)= \sum_{n=1}^\infty \frac{(q/a)_n a^n}{(1-q^n)^k (q)_n}$$
 is a polynomial in $j$ variables defined in \eqref{guptakumar polynomial}.  
\end{theorem}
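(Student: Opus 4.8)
The plan is to solve the first–order linear recurrence \eqref{define t_n} in closed form, substitute it into the expression inside the limit, pass to the limit, and then identify the resulting $q$-series with $\sum_{j\geq 1}h_jP_j(a,q)$. Writing \eqref{define t_n} as $(1-aq^n)t_n(a,q)=\bigl(f(n)-af(n+1)\bigr)+(1-q^{n-1})t_{n-1}(a,q)$ and iterating from $t_0=0$, the integrating factor is a telescoping product of the ratios $\frac{1-q^{j-1}}{1-aq^j}$, which collapses into $q$-Pochhammer symbols. This yields
\begin{align*}
t_\ell(a,q)=\frac{(q)_{\ell-1}}{(aq)_\ell}\sum_{m=1}^\ell\bigl(f(m)-af(m+1)\bigr)\frac{(aq)_{m-1}}{(q)_{m-1}}.
\end{align*}

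Next I would apply summation by parts to the inner sum, the crucial algebraic simplification being
\begin{align*}
\frac{(aq)_{m-1}}{(q)_{m-1}}-a\,\frac{(aq)_{m-2}}{(q)_{m-2}}=(1-a)\,\frac{(aq)_{m-2}}{(q)_{m-1}}.
\end{align*}
This turns the telescoped sum into a single–Pochhammer series together with a boundary term; after multiplying by $\frac{1-aq^\ell}{1-a}$ and using $\frac{1-aq^\ell}{(aq)_\ell}=\frac{1}{(aq)_{\ell-1}}$, the boundary term contributes precisely $-\frac{a}{1-a}f(\ell+1)$ to $\frac{1-aq^\ell}{1-a}t_\ell(a,q)$, which cancels the explicit $\frac{a}{1-a}f(\ell+1)$ in the theorem. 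What remains is the clean expression
\begin{align*}
\sum_{n=1}^\ell f(n)-\frac{a}{1-a}f(\ell+1)-\frac{1-aq^\ell}{1-a}t_\ell(a,q)=\sum_{m=1}^\ell f(m)\left(1-\frac{(q)_{\ell-1}}{(aq)_{\ell-1}}\,\frac{(aq)_{m-2}}{(q)_{m-1}}\right),
\end{align*}
the $m=1$ term being read with the convention $(aq)_{-1}=1/(1-a)$; the hypothesis $af(1)=0$ is exactly what makes this lowest boundary contribution consistent.

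To pass to the limit I would use the factorization
\begin{align*}
\frac{(q)_{\ell-1}}{(aq)_{\ell-1}}\,\frac{(aq)_{m-2}}{(q)_{m-1}}=\frac{(q^m)_\infty}{(aq^{m-1})_\infty}\cdot\frac{(aq^\ell)_\infty}{(q^\ell)_\infty}
\end{align*}
and split the summand into a limiting part and an error. Since $1-\frac{(q^m)_\infty}{(aq^{m-1})_\infty}$ decays geometrically in $m$ while $f$ grows only polynomially, the series $\sum_{m\geq1}f(m)\bigl(1-\frac{(q^m)_\infty}{(aq^{m-1})_\infty}\bigr)$ converges absolutely; and because $1-\frac{(aq^\ell)_\infty}{(q^\ell)_\infty}=O(q^\ell)$ dominates the polynomially growing partial sum $\sum_{m\leq\ell}f(m)\frac{(q^m)_\infty}{(aq^{m-1})_\infty}$, the error term vanishes as $\ell\to\infty$. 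Hence the limit in question equals $\sum_{m\geq1}f(m)\bigl(1-\frac{(q^m)_\infty}{(aq^{m-1})_\infty}\bigr)$.

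Finally I would identify this series with $\sum_{j\geq1}h_jP_j(a,q)$. Using the series form of $P_j(a,q)$ from \eqref{guptakumar polynomial}, namely $P_j(a,q)=-\sum_{n\geq1}\frac{(q/a)_n a^n}{(1-q^n)^j(q)_n}$, and interchanging the $j$- and $n$-summations gives $\sum_j h_jP_j(a,q)=-\sum_{n\geq1}\frac{(q/a)_n a^n}{(q)_n}\sum_{j\geq1}\frac{h_j}{(1-q^n)^j}$. Since both sides are linear in $f$, it suffices to treat the falling–factorial monomials $f(n)=n(n-1)\cdots(n-i+1)$ (those with $i\geq2$ automatically satisfy $af(1)=0$); for such $f$ the defining formula \eqref{definition h_j} collapses to $h_j=(-1)^{i-j+1}\binom{i-1}{j-2}i!$, the inner sum $\sum_j h_j(1-q^n)^{-j}$ evaluates in closed form, and the two single sums over $n$ are matched by a $q$-series transformation together with the hockey–stick identity $\sum_{m\leq n}\binom{m}{i}=\binom{n+1}{i+1}$. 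I expect this last step to be the main obstacle: reconciling the telescoped side $\sum_m f(m)\bigl(1-\frac{(q^m)_\infty}{(aq^{m-1})_\infty}\bigr)$ with the Gupta–Kumar side and justifying the interchange of summations is where the Stirling-number bookkeeping in \eqref{definition h_j} and the two-variable Dilcher identity (Corollary \ref{Uchimura generalization_2 variable}) must be combined. Specializing $a\to0$ recovers the Andrews–Crippa–Simon limit \eqref{andrews crippa simmos's result with limit}, which serves as a consistency check.
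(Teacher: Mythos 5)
Your route through the first two--thirds of the argument is genuinely different from the paper's, and it is correct. The paper never solves the recurrence \eqref{define t_n} in closed form: it introduces the generating function $T(a,\alpha,q)=\sum_{n\ge1}t_n(a,q)\alpha^n$, derives the functional equation $T(a,\alpha,q)=\tfrac{1-a/\alpha}{1-\alpha}F(\alpha)-\tfrac{(1-a/\alpha)\alpha}{1-\alpha}T(a,\alpha q,q)$, iterates it, sets $\alpha=q$, and separately telescopes the recurrence to show that the limit in the theorem equals $T(a,q,q)=-\sum_{n\ge1}\frac{(q/a)_nF(q^n)(a/q)^n}{(q)_n}$. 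Your explicit solution plus summation by parts lands on $\sum_{m\ge1}f(m)\bigl(1-\frac{(q^m)_\infty}{(aq^{m-1})_\infty}\bigr)$, and this is the \emph{same} object: the $q$-binomial theorem \eqref{q binomial thm} with $A=q/a$, $z=aq^{m-1}$ gives $\frac{(q^m)_\infty}{(aq^{m-1})_\infty}=\sum_{n\ge0}\frac{(q/a)_n(a/q)^nq^{mn}}{(q)_n}$, so your series equals $-\sum_{n\ge1}\frac{(q/a)_n(a/q)^nF(q^n)}{(q)_n}$ after interchanging the $m$- and $n$-sums. Your version buys an honest closed form for $t_\ell(a,q)$ and a transparent convergence/error estimate (the paper is terser on this point); the paper's version reaches the $F(q^n)$ form faster, which is exactly what the last step wants.

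The one place you stall --- identifying the limit with $\sum_jh_jP_j(a,q)$ --- is easier than you anticipate, and the hockey-stick identity and Corollary \ref{Uchimura generalization_2 variable} are not the right tools; the missing ingredient is just the $q$-binomial conversion above followed by the paper's expansion of $F$. Concretely, once you have $-\sum_{n\ge1}\frac{(q/a)_n(a/q)^nF(q^n)}{(q)_n}$, write $F(\alpha)=\sum_{m\ge0}\sum_{k\ge m}c_k\tilde s(k,m)\,m!\,\frac{\alpha^m}{(1-\alpha)^{m+1}}-c_0$ and expand $q^{nm}=q^n\bigl(1-(1-q^n)\bigr)^{m-1}$ binomially, so that $F(q^n)$ becomes a finite rational combination of $\frac{q^n}{(1-q^n)^{j}}$; each resulting $n$-sum is literally $-P_j(a,q)$ by \eqref{guptakumar polynomial}, and collecting coefficients reproduces \eqref{definition h_j}. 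Your falling-factorial plan also closes without any $q$-series transformation: for $f(n)=i!\binom{n}{i}$ one has $F(\alpha)=i!\alpha^i/(1-\alpha)^{i+1}$ and, by the plain binomial theorem,
\begin{align*}
\sum_{j\ge1}\frac{h_j}{(1-q^n)^{j}}=\frac{i!\,q^{n(i-1)}}{(1-q^n)^{i+1}},
\end{align*}
so the two sides agree term by term in $n$. One caveat to state explicitly: the basis elements with $i\le 1$ do not individually satisfy $af(1)=0$, but that hypothesis is only used in your limit computation (to kill the boundary term $af(1)g_0$ in the summation by parts), not in the purely formal identity between the two $q$-series, so verifying the latter on the unconstrained falling-factorial basis is legitimate. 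With these points filled in, your proof is complete.
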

\begin{remark} Letting $a = 0$ in Theorem $\ref{limit in polynomial form}$,  one can easily recover the identity \eqref{andrews crippa simmos's result with limit} of Andrews,  Crippa and Simon.  Note that the above result is the corrected version of  Theorem $4.2$ in \cite{GK21}.  
\end{remark}
Now we state a corrected version of another identity of Gupta and Kumar \cite[Theorem 1.11]{GK21}.
\begin{theorem}\label{limit formula for periodic sequence}
Let $f(n)$
%\begin{align}\label{define periodic f(n)}
%f(n)=\sum_{k=0}^{N-1} c_k \z_N^{(n-1)k}
%\end{align}
be a periodic sequence with period $N$ and $af(1)=0$.  Let $t_n(a,q)$ be the sequence defined in \eqref{define t_n}. 
Then for $|a|<1$ and $|q|<1$, we have
\begin{align*}
& \lim_{\ell \rightarrow \infty} \left( \sum_{n=1}^\ell f(n) - \frac{a}{1-a}f(\ell+1) - \left(\frac{1-aq^\ell}{1-a}\right) t_\ell(a,q) \right) \nonumber  \\ 
& =  c_0 \mathfrak{S}_{0,a}(q) +\sum_{k=1}^{N-1} \frac{c_k}{1-\z_N^k}  -\frac{(q)_\i}{(a)_\i} \sum_{k=1}^{N-1} c_k\frac{(a \zeta_N^k)_\i}{(\z_N^k)\i},
\end{align*}
where
\begin{align*}
c_k=\frac{1}{N}\sum_{j=1}^N f(j) \z_N^{(1-j)k}.  
\end{align*}
\end{theorem}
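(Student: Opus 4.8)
The plan is to reduce the limit to a single convergent $q$-series by a telescoping argument, and then to evaluate that series by expanding $f$ into its discrete Fourier modes. Throughout write
$$E_\ell := \sum_{n=1}^\ell f(n) - \frac{a}{1-a}f(\ell+1) - \frac{1-aq^\ell}{1-a}\,t_\ell(a,q).$$
First I would rewrite the recurrence \eqref{define t_n} as $(1-aq^n)t_n = f(n)-af(n+1)+(1-q^{n-1})t_{n-1}$ and compute $E_\ell-E_{\ell-1}$. After substituting this into the $t$-difference one finds $(1-aq^\ell)t_\ell-(1-aq^{\ell-1})t_{\ell-1}=f(\ell)-af(\ell+1)-(1-a)q^{\ell-1}t_{\ell-1}$, and upon collecting terms the coefficients of $f(\ell)$ and of $f(\ell+1)$ cancel identically, leaving the clean telescoping identity $E_\ell-E_{\ell-1}=q^{\ell-1}t_{\ell-1}$. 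Since $E_0=-\tfrac{a}{1-a}f(1)$, the hypothesis $af(1)=0$ forces $E_0=0$, so $E_\ell=\sum_{m=0}^{\ell-1}q^m t_m$ and hence $\lim_{\ell\to\infty}E_\ell=\sum_{m=0}^\infty q^m t_m$ (convergence being routine from $|q|<1$ and boundedness of $t_m$).

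Next I would solve the first-order linear recurrence \eqref{define t_n} by the summation-factor method to get the closed form $t_n=\sum_{m=1}^n\bigl(f(m)-af(m+1)\bigr)\frac{(q^m;q)_{n-m}}{(aq^m;q)_{n-m+1}}$. Inserting this and interchanging the order of summation reduces everything to the inner sum $\sum_{i\ge0}q^i\frac{(q^m;q)_i}{(aq^m;q)_{i+1}}$, which telescopes: setting $v_i:=\frac{1}{(1-a)q^m}\frac{(q^m;q)_i}{(aq^m;q)_i}$ one checks $v_i-v_{i+1}=q^i\frac{(q^m;q)_i}{(aq^m;q)_{i+1}}$, so the sum equals $\frac{1}{(1-a)q^m}\bigl(1-\tfrac{(q^m)_\infty}{(aq^m)_\infty}\bigr)$. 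This produces the convergent representation
\begin{align*}
\lim_{\ell\to\infty}E_\ell=\frac{1}{1-a}\sum_{m=1}^\infty\bigl(f(m)-af(m+1)\bigr)\Bigl(1-\tfrac{(q^m)_\infty}{(aq^m)_\infty}\Bigr),
\end{align*}
which genuinely converges since $1-\tfrac{(q^m)_\infty}{(aq^m)_\infty}=O(q^m)$ while $f$ is bounded.

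I would then expand $f(n)=\sum_{k=0}^{N-1}c_k\zeta_N^{(n-1)k}$ and treat each geometric mode $b=\zeta_N^k$ separately; for such a mode $f(m)-af(m+1)=b^{m-1}(1-ab)$. For $|b|<1$ the $q$-binomial theorem gives $\sum_{j\ge1}b^{j-1}\frac{(q^j)_\infty}{(aq^j)_\infty}=\frac{(q)_\infty}{(a)_\infty}\frac{1-a}{1-ab}\frac{(ab)_\infty}{(b)_\infty}$, so the mode's contribution collapses to $\Phi(b)=\frac{1}{1-b}+\frac{a}{1-a}-\frac{(q)_\infty}{(a)_\infty}\frac{(ab)_\infty}{(b)_\infty}$. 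Because $\sum_{j\ge1}b^{j-1}\bigl(1-\tfrac{(q^j)_\infty}{(aq^j)_\infty}\bigr)$ is entire in $b$, this formula persists at each root of unity $b=\zeta_N^k$ with $k\ge1$ by analytic continuation. Summing over $k\ge1$ reproduces exactly $\sum_{k=1}^{N-1}\frac{c_k}{1-\zeta_N^k}-\frac{(q)_\infty}{(a)_\infty}\sum_{k=1}^{N-1}c_k\frac{(a\zeta_N^k)_\infty}{(\zeta_N^k)_\infty}$, together with a spurious term $\frac{a}{1-a}\sum_{k=1}^{N-1}c_k$.

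It remains to handle the $k=0$ mode, whose contribution is $c_0\sum_{j\ge1}\bigl(1-\tfrac{(q^j)_\infty}{(aq^j)_\infty}\bigr)$; I would identify this with $c_0\bigl(\mathfrak{S}_{0,a}(q)+\tfrac{a}{1-a}\bigr)$ through the auxiliary identity $\sum_{j\ge1}\bigl(1-\tfrac{(q^j)_\infty}{(aq^j)_\infty}\bigr)=\sum_{n\ge1}\frac{(1-a^n)q^n}{1-q^n}$, established by the same $q$-binomial expansion and interchange of summation (equivalently as the $b\to1$ limit of $\Phi$). The final bookkeeping is where the hypothesis does its second job: since $\sum_{k=1}^{N-1}c_k=f(1)-c_0$ and $af(1)=0$, the spurious $\frac{a}{1-a}\sum_{k\ge1}c_k$ collapses to $-\frac{a}{1-a}c_0$, which is precisely what converts $c_0\bigl(\mathfrak{S}_{0,a}+\tfrac{a}{1-a}\bigr)$ into $c_0\mathfrak{S}_{0,a}(q)$, yielding the stated formula. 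I expect the main obstacle to be this last stage rather than the telescoping: one must justify the root-of-unity evaluations by analytic continuation (the individual pieces $\frac{1}{1-b}$ and $\sum b^{j-1}\tfrac{(q^j)_\infty}{(aq^j)_\infty}$ diverge on $|b|=1$, and only their difference is regular), and then track the $\frac{a}{1-a}$ boundary contributions carefully, their cancellation resting entirely on the single hypothesis $af(1)=0$ being invoked twice.
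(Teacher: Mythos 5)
Your proposal is correct, and while it shares the paper's overall skeleton --- telescope the limit down to $\sum_{n\ge 1} q^n t_n(a,q)$, then decompose the periodic $f$ into its roots-of-unity modes --- the way you evaluate that series is genuinely different. The paper never solves the recurrence explicitly: it forms the generating function $T(a,\alpha,q)=\sum_n t_n(a,q)\alpha^n$, derives the functional equation $T(a,\alpha,q)=\frac{1-a/\alpha}{1-\alpha}F(\alpha)-\frac{(1-a/\alpha)\alpha}{1-\alpha}T(a,\alpha q,q)$ (the hypothesis $af(1)=0$ is absorbed at this step), iterates it to obtain $\sum_n q^n t_n=-\sum_{n\ge1}\frac{(q/a)_nF(q^n)(a/q)^n}{(q)_n}$, substitutes the partial-fraction form $F(q^n)=\sum_k c_kq^n/(1-\zeta_N^kq^n)$, and evaluates each $k\ge1$ piece as a ${}_2\phi_1$ via the $q$-Gauss sum \eqref{q-Gauss sum} --- a series that converges outright at $b=\zeta_N^k$, so no continuation argument is needed; the $k=0$ piece is absorbed into Gupta--Kumar's $P_1$. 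Your route instead solves the recurrence by a summation factor and telescopes the resulting $q$-products to reach $\frac{1}{1-a}\sum_m(f(m)-af(m+1))\bigl(1-\frac{(q^m)_\infty}{(aq^m)_\infty}\bigr)$, after which only the $q$-binomial theorem is needed per mode --- at the price of an analytic-continuation step, since your mode series converges only for $|b|<1$ and must be continued to the unit circle. That step is sound: the power series $\sum_j b^{j-1}\bigl(1-\frac{(q^j)_\infty}{(aq^j)_\infty}\bigr)$ has radius of convergence $1/|q|>1$ (it is not \emph{entire}, as you claim, since the coefficients decay only like $|q|^j$, but analyticity on a disc containing the unit circle is all you need). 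Your $b\to1$ treatment of the $k=0$ mode and the double use of $af(1)=0$ give a correct, and arguably more transparent, accounting of why that hypothesis appears. All the intermediate identities you state --- the telescoping $E_\ell-E_{\ell-1}=q^{\ell-1}t_{\ell-1}$, the closed form for $t_n$, the product telescoping, the evaluation of $\Phi(b)$, and the auxiliary identity $\sum_j\bigl(1-\frac{(q^j)_\infty}{(aq^j)_\infty}\bigr)=\sum_n\frac{(1-a^n)q^n}{1-q^n}=\mathfrak{S}_{0,a}(q)+\frac{a}{1-a}$ --- check out.
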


\begin{corollary}\label{simpler version of limit value}
Let $f(n)$ and $t_n(a,q)$ be same as in Theorem \ref{limit formula for periodic sequence}. Then for $|q|<1$, we have
\begin{align*}
\lim_{\ell \rightarrow \infty} \left( \sum_{n=1}^\ell f(n) - \frac{a}{1-a}f(\ell+1) - \left(\frac{1-aq^\ell}{1-a}\right) t_\ell(a,q) \right) =  \frac{(q)_\i}{(a)_\i} \sum_{n=0}^\i \frac{(a/q)_n q^n}{(q)_n}\sum_{j=1}^N f(j)  \Big\lceil{ {\frac{n+1-j}{N}}} \Big \rceil.
\end{align*}
\end{corollary}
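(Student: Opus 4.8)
The plan is to deduce this corollary directly from Theorem \ref{limit formula for periodic sequence}. Since the limit on the left is \emph{verbatim} the one appearing in that theorem, it suffices to prove that the two closed forms on the right agree, i.e. that
\begin{align*}
\frac{(q)_\i}{(a)_\i} \sum_{n=0}^\i \frac{(a/q)_n q^n}{(q)_n}\sum_{j=1}^N f(j) \Big\lceil \tfrac{n+1-j}{N}\Big\rceil
&= c_0 \mathfrak{S}_{0,a}(q) +\sum_{k=1}^{N-1} \frac{c_k}{1-\zeta_N^k} \\
&\quad -\frac{(q)_\i}{(a)_\i} \sum_{k=1}^{N-1} c_k\frac{(a \zeta_N^k)_\i}{(\zeta_N^k)_\i}.
\end{align*}
My first move would be to expand $f$ via finite Fourier inversion of the defining relation for $c_k$, namely $f(j)=\sum_{k=0}^{N-1} c_k \zeta_N^{(j-1)k}$. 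Substituting this and interchanging the finite $j$- and $k$-sums with the $n$-series (legitimate because each resulting series converges absolutely for $|q|<1$, the Pochhammer ratios being bounded) reduces the whole matter to evaluating, for each $k$, the inner sum $g_k(n):=\sum_{j=1}^N \zeta_N^{(j-1)k}\lceil \tfrac{n+1-j}{N}\rceil$.

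The crux is the closed-form evaluation of $g_k(n)$, and I expect this to be the main obstacle. For $k=0$ I would use that $\lceil i/N\rceil - i/N$ is periodic of period $N$, so summing $\lceil i/N\rceil$ over the $N$ consecutive integers $i=n-N+1,\dots,n$ telescopes to give $g_0(n)=n$. For $1\le k\le N-1$ I would apply summation by parts, exploiting $\sum_{j=1}^N \zeta_N^{(j-1)k}=0$ (a geometric sum with $\zeta_N^{kN}=1$ and $\zeta_N^k\neq1$): the ``flat'' part of the step function $j\mapsto\lceil\tfrac{n+1-j}{N}\rceil$ then contributes nothing, and only its single unit drop—located at the unique $j\equiv n \pmod N$—survives. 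A short computation yields $g_k(n)=\dfrac{\zeta_N^{kn}-1}{\zeta_N^k-1}$, a formula that conveniently also returns $0$ when $n\equiv0\pmod N$, so no separate case is needed.

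With these formulas the assembly is routine. The $k=0$ contribution is $c_0\,\frac{(q)_\i}{(a)_\i}\sum_{n\ge1}\frac{(a/q)_nq^n}{(q)_n}\,n$, which is precisely $c_0\,\mathfrak{S}_{0,a}(q)$ by the $k=1$ instance of Gupta--Kumar's identity \eqref{guptakumar} (there ${n\choose1}=n$, and $P_1$ is the identity polynomial, so the right side equals $\mathfrak{S}_{0,a}(q)$). For $1\le k\le N-1$ I would write $g_k(n)=\frac{1}{\zeta_N^k-1}\big(\zeta_N^{kn}-1\big)$ and evaluate the two resulting $n$-series by the $q$-binomial theorem (equivalently Corollary \ref{2-var of RR} with $\alpha=0$): with $\zeta=\zeta_N^k$ one gets $\sum_{n\ge0}\frac{(a/q)_nq^n}{(q)_n}\zeta^n=\frac{(a\zeta)_\i}{(\zeta q)_\i}=\frac{(1-\zeta)(a\zeta)_\i}{(\zeta)_\i}$ and $\sum_{n\ge0}\frac{(a/q)_nq^n}{(q)_n}=\frac{(a)_\i}{(q)_\i}$. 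Substituting and simplifying collapses the $k$-th term to $\frac{c_k}{1-\zeta_N^k}-c_k\frac{(q)_\i}{(a)_\i}\frac{(a\zeta_N^k)_\i}{(\zeta_N^k)_\i}$. Summing over $k$ reproduces exactly the right-hand side of Theorem \ref{limit formula for periodic sequence}; since that theorem and the corollary share their left-hand side, the proof is complete.
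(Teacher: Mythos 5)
Your proposal is correct, and it is genuinely more substantial than the paper's own argument: the paper disposes of this corollary in one line by citing Gupta--Kumar's Corollary 4.6 for the right-hand side and Theorem \ref{limit formula for periodic sequence} for the left, whereas you give a self-contained verification that the two closed forms agree. Your key computations check out. The Fourier expansion $f(j)=\sum_{k=0}^{N-1}c_k\zeta_N^{(j-1)k}$ is exactly the inversion of \eqref{diff rep f(n)}; the evaluation $g_0(n)=n$ follows from your telescoping/periodicity argument; and for $1\le k\le N-1$, writing $n=mN+s$ with $1\le s\le N$ one finds that $\lceil\tfrac{n+1-j}{N}\rceil$ equals $m+1$ for $j\le s$ and $m$ for $j>s$, so orthogonality kills the $m$-part and leaves $g_k(n)=\frac{\zeta_N^{kn}-1}{\zeta_N^k-1}$, as you claim. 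The assembly is also right: the $k=0$ piece is $c_0\,\mathfrak{S}_{0,a}(q)$ by the $k=1$ case of \eqref{guptakumar} combined with \eqref{guptakumar polynomial} (where $P_1(x)=x$), and each $k\neq 0$ piece collapses via the $q$-binomial theorem \eqref{q binomial thm}, using $(q\zeta)_\infty=(\zeta)_\infty/(1-\zeta)$, to $\frac{c_k}{1-\zeta_N^k}-c_k\frac{(q)_\infty}{(a)_\infty}\frac{(a\zeta_N^k)_\infty}{(\zeta_N^k)_\infty}$, reproducing the right-hand side of Theorem \ref{limit formula for periodic sequence}. What your route buys is independence from the external reference [GK21]; what it costs is length, and it obscures the combinatorial content of the ceiling sum, namely that $\sum_{j=1}^N f(j)\lceil\tfrac{n+1-j}{N}\rceil=\sum_{i=1}^n f(i)$ is just the $n$-th partial sum of the periodic sequence --- an observation worth recording, as it both explains where the ceiling expression comes from and gives an alternative one-line reduction of the inner sum.
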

This is the corrected version of Corollary 4.6 in \cite{GK21}.  Letting $a=0$ and $f(n)=(-1)^n$ in Corollary \ref{simpler version of limit value},  one can recover the identity \eqref{Conjecture_ACS} of Bringmann and Jennings-Shaffer.  
%Note that  the right hand side expression is exactly same as given by Gupta and Kumar \cite[Corollary 4.6]{GK21}. and the only difference is the left side limit expression.  

In the upcoming section, we gather a few well known results that will be useful throughout this paper.

%Following this compilation, we will proceed to present the proofs of our own results

\section{Preliminary Results}\label{Pril}
For $|z|<1$, the $q$-binomial theorem \cite[p.~8,  Equation (1.32)]{GasperRahman} is given by 
\begin{align}\label{q binomial thm}
\sum_{n=0}^{\infty} \frac{(A)_nz^n}{(q)_n}=\frac{(Az)_{\infty}}{(z)_{\infty}}.
\end{align}
%We adopt the standard notation as
%\begin{align*}
%\sum_{n=0}^{\infty} \frac{(A)_n (B)_n z^n}{(C)_n(q)_n}={}_{2}\phi_{1}\left( \begin{matrix} A, & B  \\ & C \end{matrix}  ; q; z \right). 
%\end{align*}
We recall Fine's transformation \cite[p.~359,  (III.1)]{GasperRahman}
\begin{align}\label{Fine's transformation 2phi1}
{}_{2}\phi_{1}\left( \begin{matrix} A,  & B  \\ & C \end{matrix}  ; q; z \right)=\frac{(A z)_{\infty}(B)_{\infty}}{(z)_{\infty}(C)_{\infty}}{}_{2}\phi_{1}\left( \begin{matrix} \frac{C}{B}, & z  \\ & Az \end{matrix}  ; q; B \right),
\end{align}
In particular,  when $z=\frac{C}{AB}$,  it is known as $q$-Gauss sum \cite[p.~354, (II.8)]{GasperRahman}:
\begin{align}\label{q-Gauss sum}
{}_{2}\phi_{1}\left( \begin{matrix} A, & B  \\ & C \end{matrix}  ; q; \frac{C}{AB} \right)= \frac{\left(\frac{C}{A} \right)_\infty  \left(\frac{C}{B} \right)_\infty}{(C)_\infty  \left( \frac{C}{AB}   \right)_\infty}.
\end{align}
The following ${}_{3}\phi_{2}$ transformation formula \cite[p.~359, (III.9)]{GasperRahman} will play a crucial role to prove one of our main theorems:
\begin{align}\label{3_phi_2}
{}_{3}\phi_{2}\left( \begin{matrix} A, & B, & C \\ & D, & E \end{matrix} \, ; q, \frac{DE}{ABC}  \right) = \frac{\big(\frac{E}{A}\big)_\infty\big( \frac{DE}{BC}\big)_{\infty }}{\big(E\big)_\infty \big(\frac{DE}{ABC}\big)_{\infty}} 
{}_{3}\phi_{2}\left(  \begin{matrix}A,& \frac{D}{B} , & \frac{D}{C} \\
& D,& \frac{DE}{BC} \end{matrix} \, ; q, \frac{E}{A} \right).
\end{align}
Now we mention the Chu-Vandermonde identity, 
\begin{align}\label{Chu-Vandermonde}
{k+n-1 \choose k}=\sum_{r=1}^k {n \choose r} {k-1 \choose k-r}.
\end{align}
In the next section,  we present proofs of our main results.

\section{Proofs of main results}\label{Proofs}
\begin{proof}[Theorem \ref{General F function}][]
Using the power series representation of $f(z)$ and interchanging the summations, we can write 
\begin{align*}
\sum_{n=0}^{\infty}\frac{(b/a)_n a^n}{(d)_n}f(cq^n)=\sum_{j=0}^{\infty}\lambda_j c^j\sum_{n=0}^{\infty}\frac{(b/a)_n}{(d)_n}(aq^j)^n.
\end{align*}
The right side of the above expression can be re-written as
\begin{align*}
\sum_{j=0}^{\infty}\lambda_j c^j\sum_{n=0}^{\infty}\frac{(b/a)_n}{(d)_n}(aq^j)^n=\sum_{j=0}^{\infty}\lambda_j c^j \lim_{\gamma \rightarrow 0} {}_{3}\phi_{2}\left( \begin{matrix} q, & b/a, & \gamma d  \\ & d, & \gamma bq^{j+1} \end{matrix}  ; q; aq^j \right). 
\end{align*}
%as $\gamma$ tends to zero. 
Now we make use of ${}_3\phi_2$ transformation formula \eqref{3_phi_2} to simplify further.  Thus,  we obtain
\begin{align*}
\sum_{j=0}^\infty \lambda_jc^j  \lim_{\gamma \rightarrow 0}  \frac{(\gamma bq^j)_\infty (aq^{j+1})_\infty}{(\gamma bq^{j+1})_\infty (aq^j)_\infty} \sum_{n=0}^\infty \frac{(ad/b)_n (1/\gamma)_n}{(d)_n (aq^{j+1})_n}(\gamma bq^j)^n =  \sum_{j=0}^{\infty}\lambda_j c^j\sum_{n=0}^{\infty}\frac{(ad/b)_n(-b)^nq^{{n\choose 2}+jn}}{(d)_n(aq^j)_{n+1}}. 
\end{align*}
This completes the proof of Theorem \ref{General F function}. 
\end{proof}
\begin{proof}[Theorem \ref{Rajat Rahul Generalization}][]
For any $\alpha \in \mathbb{C}$ and $|z|<1$,  by considering the function $$f(z)=1/(1-z)^\alpha= \sum_{j=0}^\infty {\alpha + j -1\choose j} z^j,  $$ in Theorem \ref{General F function},  where the binomial coefficient is defined as in \eqref{binomial coeff},    we can immediately obtain this result.
\end{proof}

\begin{proof}[Corollary \ref{2-var of RR}][]
Replace $d$ by $q$ in Theorem \ref{Rajat Rahul Generalization} to see that
\begin{align}\label{d=q in cor 2.2}
\sum_{n=0}^{\infty}\frac{(b/a)_n a^n}{(1-cq^n)^\alpha (q)_n}=\sum_{j=0}^{\infty}c^j{\alpha + j -1\choose j} \sum_{n=0}^{\infty}\frac{(aq/b)_n(-b)^nq^{{n\choose 2}+jn}}{(q)_n(aq^j)_{n+1}}.
\end{align}
The right hand side of \eqref{d=q in cor 2.2} can be written as
\begin{align*}
\sum_{j=0}^{\infty}\frac{c^j}{1-aq^j}{\alpha + j -1\choose j} \lim_{\gamma \rightarrow 0} \sum_{n=0}^{\infty}\frac{\left(\frac{aq}{b}\right)_n \left(\frac{b}{\gamma}\right)_n}{(aq^{j+1})_n} \frac{(\gamma q^j)^n}{(q)_n}.
\end{align*}
Now applying $q$-Gauss sum \eqref{q-Gauss sum} in the inner summation and upon simplification,  we get
\begin{align*}
\sum_{j=0}^{\infty}\frac{c^j}{1-aq^j}{\alpha + j -1\choose j}\frac{(bq^j)_\infty}{(aq^{j+1})_\infty}=\sum_{j=0}^{\infty} c^j{\alpha + j -1\choose j}\frac{(bq^j)_\infty}{(aq^{j})_\infty}.
\end{align*}
This finishes the proof.  
\end{proof}

\begin{proof}[Corollary \ref{Gupta Kumar Extension}][]
Using the substitution $b=q^2,~c=q$ in Corollary \ref{2-var of RR}, we get
\begin{align*}
\sum_{n=0}^{\infty}\frac{(q^2/a)_n a^n}{(1-q^{n+1})^{\alpha}(q)_n}=\sum_{n=0}^{\infty}q^n{\alpha+n-1\choose n}\frac{(q^{n+2})_{\infty}}{(aq^n)_{\infty}}.
\end{align*}
Multiply both sides by $a-q$ and replace $\alpha$ by $\alpha + 1$ to derive
\begin{align*}
\sum_{n=0}^{\infty}\frac{(q/a)_{n+1} a^{n+1}}{(1-q^{n+1})^{\alpha}(q)_{n+1}}=-\frac{(q)_{\infty}}{(a)_{\infty}}\sum_{n=0}^{\infty}q^{n+1}{\alpha+n\choose \alpha}\frac{\left(\frac{a}{q}\right)_{n+1}}{(q)_{n+1}}.
\end{align*}
Finally,  replacing the index $n$ of the sum by $n-1$ on both sides yields the desired identity.
\end{proof}

\begin{proof}[Corollary \ref{Uchimura generalization_2 variable}][]
Letting $a \rightarrow 0$ in Theorem \ref{Rajat Rahul Generalization}, we have
\begin{align*}
\sum_{n=0}^\infty \frac{(-1)^n b^n q^{{n \choose 2}}}{(1-cq^n)^{\alpha}(d)_n} &= \sum_{j=0}^\infty c^j {\alpha+j-1 \choose j} \sum_{n=0}^\infty \frac{(-b)^n q^{{n\choose 2}+jn}}{(d)_n}, \\ &= \sum_{j=0}^\infty c^j {\alpha+j-1 \choose j}  \lim_{\gamma \rightarrow 0} {}_2\phi_1 \left( \begin{matrix} b/\gamma, & q  \\ & d \end{matrix}  ~; q~; \gamma q^j \right) \nonumber \\
&=  \sum_{j=0}^\infty c^j {\alpha+j-1 \choose j}  \lim_{\gamma \rightarrow 0} \frac{(b q^j)_\infty (q)_\infty}{(\gamma q^j)_\infty (d)_\infty}  {}_2\phi_1 \left( \begin{matrix} d/q & \gamma q^j  \\ & b q^j \end{matrix}  ~; q~;  q \right) \\
&  = \frac{(q)_\infty}{(d)_\infty} \sum_{j=0}^\infty c^j {\alpha+j-1 \choose j} (bq^j)_\infty \sum_{n=0}^\infty \frac{(d/q)_n q^n}{(bq^j)_n(q)_n}.
\end{align*}
Note that we have used Fine's transformation formula \eqref{Fine's transformation 2phi1} in the penultimate step.   Now replace $\alpha$ by $\alpha+1,~d$ by $c$ and put $b=q^{r+1}$, for some non-negative integer $r$, then we have
\begin{align*}
\sum_{n=0}^\infty \frac{(-1)^n q^{{n+1 \choose 2}+nr}}{(1-cq^n)^{\alpha}(c)_{n+1}} = \frac{(q)_\infty}{(c)_\infty} \sum_{j=0}^\infty c^j {\alpha+j \choose \alpha} (q^{j+r+1})_\infty \sum_{n=0}^\infty \frac{(c/q)_n q^n}{(q^{j+r+1})_n(q)_n}.
\end{align*}
Now changing the index by replacing $n$ by $n-1$ in the left side  and $j$ by $j-r$ in the right side,  we get 
\begin{align*}
\sum_{n=1}^\infty \frac{(-1)^{n-1} q^{{n \choose 2}+(n-1)r} c^r}{(1-cq^{n-1})^{\alpha}(c)_{n}} = \frac{(q)_\infty}{(c)_\infty} \sum_{j=r}^\infty c^j {\alpha+j-r \choose \alpha} (q^{j+1})_\infty \sum_{n=0}^\infty \frac{(c/q)_n q^n}{(q^{j+1})_n(q)_n}.
\end{align*}
Finaly,  replacing $c$ by $q$ in the above identity, we will get the identity \eqref{two variable gen_Uchimura}  as the inner sum of the above summation shall only survive if $n=0$.   
\end{proof}

\begin{proof}[Theorem \ref{Dilcher's generalization}][]
For $k,  r \in \mathbb{N}$,  let us define 
\begin{equation} \label{V_k,r}
V_{k,r}(q) := \sum_{n=1}^\infty \frac{(-1)^{n-1}q^{{n \choose 2}+nr}}{(1-q^n)^k(q)_n}.
\end{equation}
With the help of the binomial theorem, one can verify that
$$
x^{r-1} \sum_{j=1}^k {k-1 \choose j-1} \frac{x^j}{(1-x)^j} = \frac{x^r}{(1-x)^k}.
$$
Substituting $x =q^n$, we have
\begin{align}\label{identity came from binomial}
q^{n(r-1)}\sum_{j=1}^k {k-1 \choose j-1} \frac{q^{nj}}{(1-q^n)^j} = \frac{q^{nr}}{(1-q^n)^k}.
\end{align}
Utilize $\eqref{identity came from binomial}$ in $\eqref{V_k,r}$ to see that
\begin{align*}
V_{k,r}(q)=\sum_{n=1}^\infty \frac{(-1)^{n-1}q^{n\choose 2}}{(q)_n}\sum_{j=1}^k {k-1 \choose j-1} \frac{q^{n(j+r-1)}}{(1-q^n)^j}.
\end{align*}
Now we interchange the summation and use the definition $\eqref{V_k,r}$ to get
\begin{align} \label{Recurrence relation in V}
V_{k,r}(q)=\sum_{j=1}^k {k-1 \choose j-1}V_{j,r+j-1}(q).
\end{align}
Employing Corollary \ref{Uchimura generalization_2 variable},  it can be seen that 
\begin{align}\label{v_j,r+j-1}
V_{j,r+j-1}(q)=\sum_{n=r+j-1}^\infty {n-r+1\choose j} q^n (q^{n+1})_\infty.
\end{align}
Substitute $\eqref{v_j,r+j-1}$ in $\eqref{Recurrence relation in V}$ to see that
\begin{align}\label{V_k,r in product form}
V_{k,r}(q)=\sum_{j=1}^k {k-1 \choose j-1}\sum_{n=r+j-1}^\infty {n-r+1\choose j} q^n (q^{n+1})_\infty.
\end{align}
From the definition of Stirling numbers of the first kind,  one knows that
\begin{align}\label{Binomial and stirling relation}
{n\choose m}=\sum_{l=1}^m \frac{s(m,l)}{m!} n^l.
\end{align} 
We utilize $\eqref{Binomial and stirling relation}$ to write
\begin{align}\label{Binomial and A(j,r,t) relation}
{n-r+1\choose j} =\sum_{l=1}^j \frac{s(j,l)}{j!}(n-r+1)^l & =\sum_{l=1}^j \frac{s(j,l)}{j!}\sum_{t=0}^l{l\choose t}(1-r)^{l-t}n^t 
 =\sum_{t=0}^j A(j,r,t)n^t,
\end{align}
where, $A(j,r,t)$ is defined by
\begin{align*}
A(j,r,t)=\sum_{l=t}^j \frac{s(j,l)}{j!}{l \choose t} (1-r)^{l-t}.
\end{align*}
Here,  we have used the fact that $s(j,0)=0$.  Now employing  $\eqref{Binomial and A(j,r,t) relation}$ in $\eqref{V_k,r in product form}$, we get
\begin{align*}
V_{k,r}(q)=\sum_{j=1}^k {k-1 \choose j-1} \sum_{t=0}^j A(j,r,t) U_{t,r+j-1}(q),
\end{align*}
where $U_{t,r+j-1}(q)$ is the tail of the Uchimura's function defined by
$$
U_{t,r+j-1}(q) = \sum_{n = j+r-1}^\infty n^t q^n (q^{n+1})_\infty.
$$
Now we simplify $V_{k,r}(q)$  further by separating the term corresponding to $t=0$.  Doing this,  we get
\begin{align*}
V_{k,r}(q)& =\sum_{j=1}^k {k-1 \choose j-1} \sum_{t=1}^j A(j,r,t) U_{t,r+j-1}(q) + \sum_{j=1}^k {k-1 \choose j-1}  A(j,r,0) U_{0,r+j-1}(q) \\
& = \sum_{t=1}^k \sum_{j=t}^k {k-1 \choose j-1} A(j,r,t) U_{t,r+j-1}(q) + \sum_{j=1}^k {k-1 \choose j-1}  A(j,r,0) U_{0,r+j-1}(q).
\end{align*}
Finally,  replacing $j$ by $j+t$ in the first sum, one can complete the proof of Theorem \ref{Dilcher's generalization}. 
\end{proof}

\begin{proof}[Theorem \ref{recurrence relation of C(k,r,t) and A(j,r,t)}][]
First,  we shall prove the recurrence \eqref{recurrence relation of A(j,r,t)}. From \cite[Equation (6C), p.~215]{comtet},  we know that
\begin{align}\label{generating function of s(j,l)}
\sum_{l=0}^js(j,l)x^l=(x)_j=x(x-1)(x-2)\cdots(x-j+1).
\end{align}
Successive differentiation of the above equation, $t$ times,  with respect to $x$ yields
\begin{align}\label{A(j,1-x,t) in terms of derivative}
A(j,1-x,t)=\sum_{l=t}^j \frac{s(j,l)}{j!} {l \choose t} x^{l-t} 
& =\frac{1}{j!~t!}\frac{d^t}{dx^t}(x)_j \\
&=\frac{1}{j!~t!}\frac{d^t}{dx^t}\left((x)_{j-1}(x-j+1)\right).  \label{derv}
\end{align}
Using Leibniz rule for differentiation in \eqref{derv}, we get
\begin{align*}
A(j,1-x,t)&=\frac{1}{j!~t!}\left( (x-j+1) \frac{d^t}{dx^t}(x)_{j-1} + t \frac{d^{t-1}}{dx^{t-1}}(x)_{j-1}  \right)\\
&=\frac{(x-j+1)}{j} \frac{1}{(j-1)!~t!} \frac{d^t}{dx^t}(x)_{j-1} + \frac{1}{j~(j-1)!~(t-1)!} \frac{d^{t-1}}{dx^{t-1}}(x)_{j-1}.
\end{align*}
Now utilize \eqref{A(j,1-x,t) in terms of derivative} in the above equation to see that
\begin{align*}
A(j-1,1-x,t-1) = jA(j,1-x,t) + (j-x-1)A(j-1,1-x,t).
\end{align*}
The identity \eqref{recurrence relation of A(j,r,t)} follows by replacing $j,~x,~t$ by $j+1,~1-r,~t+1$, respectively,  in  the above equation.

To prove \eqref{recurrence relation of C(k,r,t)},  we shall start with
\begin{align*}
&\frac{k+1-r}{k+1}C(k,r,t+1)+\frac{1}{k+1}C(k,r,t)\\
&=\frac{k+1-r}{k+1}\sum_{j=0}^{k-t-1} {k-1 \choose j+t} A(j+t+1,r,t+1)+\frac{1}{k+1}\sum_{j=0}^{k-t} {k-1 \choose j+t-1} A(j+t,r,t) \\
&=\frac{k+1-r}{k+1}\sum_{j=0}^{k-t-1} {k-1 \choose j+t} A(j+t+1,r,t+1)\\
&\hspace{1cm}+\frac{1}{k+1}\sum_{j=0}^{k-t} {k-1 \choose j+t-1}\left[(j+t+1)A(j+t+1,r,t+1)+(r+j+t-1)A(j+t,r,t+1) \right].
\end{align*}
Note that to obtain the last step we used the recurrence \eqref{recurrence relation of A(j,r,t)}. 
Using the fact that ${k-1 \choose k}=0$,  the first sum is also valid for $j=k-t$.  Using this and simplifying further,  we obtain
\begin{align*}
& \frac{k+1-r}{k+1}C(k,r,t+1)+\frac{1}{k+1}C(k,r,t) \\ &=\frac{1}{k+1}\sum_{j=0}^{k-t} \left((k+1-r){k-1 \choose j+t} + (j+t+1) {k-1 \choose j+t-1}\right) A(j+t+1,r,t+1)\\
&\hspace{1cm}+\frac{1}{k+1}\sum_{j=1}^{k-t} {k-1 \choose j+t-1} (r+j+t-1)A(j+t,r,t+1),
\end{align*}
where in the last step we used the fact that $A(t,r,t+1)=0$.  By changing the variable $j$ by $j+1$ in the second sum and simplifying further,  we see that 
\begin{align*}
&\frac{k+1-r}{k+1}C(k,r,t+1)+\frac{1}{k+1}C(k,r,t)\\
&=\frac{1}{k+1}\sum_{j=0}^{k-t} \left((k+j+t+1){k-1 \choose j+t} + (j+t+1) {k-1 \choose j+t-1} \right) A(j+t+1,r,t+1)\\
&=\sum_{j=0}^{k-t} {k \choose j+t} A(j+t+1,r,t+1)=C(k+1,r,t+1).
\end{align*}
This completes the proof of \eqref{recurrence relation of C(k,r,t)}.  
\end{proof}

\begin{proof}[Theorem \ref{summation formula for A[j,r,t) and C(k,r,t)}][]
%We will start with proving \eqref{summation formula for A(j,r,t)}.
From the definition \eqref{A(j,r,i)} of $A(j, r,t)$,  we can write 
\begin{align*}
\sum_{t=1}^jA(j,r,t)&=\sum_{t=1}^j\sum_{l=1}^j \frac{s(j,l)}{j!}{l \choose t} (1-r)^{l-t}\\
&=\sum_{l=1}^j \frac{s(j,l)}{j!} \sum_{t=1}^l {l \choose t} (1-r)^{l-t}\\
&=\sum_{l=1}^j \frac{s(j,l)}{j!} \left((2-r)^l-(1-r)^l\right)\\
&=\frac{1}{j!}\left(\sum_{l=1}^j s(j,l) (2-r)^l - \sum_{l=1}^j s(j,l)(1-r)^l\right).
\end{align*}
Now utilizing \eqref{generating function of s(j,l)},  one can rewrite the above equation as follows:
\begin{align*}
\sum_{t=1}^jA(j,r,t)&=\frac{1}{j!}\left((2-r)_j - (1-r)_j\right)\\
&=\frac{(-1)^{j-1}}{(j-1)!}(r-1)r(r+1)\cdots (r+j-3).
\end{align*}
This completes the proof of \eqref{summation formula for A(j,r,t)}. 

Now to prove \eqref{summation formula for C(j,r,t)}, we will start with
\begin{align*}
\sum_{t=1}^kC(k,r,t)&=\sum_{t=1}^k\sum_{j=0}^{k-t} {k-1 \choose j+t-1} A(j+t,r,t)\\
&=\sum_{t=1}^k\sum_{j=t}^{k} {k-1 \choose j-1} A(j,r,t)\\
&=\sum_{j=1}^k {k-1 \choose j-1}  \sum_{t=1}^{j} A(j,r,t)\\
&=\sum_{j=1}^k {k-1 \choose j-1} \frac{(-1)^{j-1}}{(j-1)!}(r-1)r(r+1)\cdots (r+j-3)\\
&= \sum_{j=1}^k (-1)^{j-1} {k-1 \choose j-1}  {r+j-3 \choose j-1} \\
& = \sum_{j=0}^{k-1} (-1)^{j} {k-1 \choose j}  {r+j-2 \choose j} \\
& = (-1)^{k-1} {r-2 \choose k-1},
\end{align*}
where to obtain the last equality we used the following binomial identity,  for $a , b \in \mathbb{N}$,   
$$\sum_{j=0}^{a} (-1)^{j} {a \choose j}  {b+j \choose j}  = (-1)^{a} {b \choose a}.$$
This finishes the proof of \eqref{summation formula for C(j,r,t)}.  
\end{proof}

Before proving Theorem \ref{polynomial version generalization of gupta-kumar},  we need the following results.  

\begin{lemma}\label{lemma 5}
For $r\in \mathbb{N},~|a|<1$ and $|c q|<1$, we have
\begin{align*}
\frac{d^{r}}{dx^{r}}\left[\frac{(x ac)_\infty}{(x c q)_\infty}\right]_{x=1} = r!\sum_{n=0}^\infty {n\choose r} \frac{(a/q)_n}{(q)_n}c^n q^n.
\end{align*}
\end{lemma}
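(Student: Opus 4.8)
The plan is to expand the function $\frac{(xac)_\infty}{(xcq)_\infty}$ as a power series in $x$ by means of the $q$-binomial theorem, and then to differentiate the resulting series term by term and set $x=1$.

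First I would recast the quotient into the shape $\frac{(Az)_\infty}{(z)_\infty}$ required by the $q$-binomial theorem \eqref{q binomial thm}. Taking $z=xcq$ and $A=a/q$ gives $Az=xac$, so the left side of \eqref{q binomial thm} is precisely $\frac{(xac)_\infty}{(xcq)_\infty}$. Applying \eqref{q binomial thm} then yields
$$\frac{(xac)_\infty}{(xcq)_\infty}=\sum_{n=0}^\infty \frac{(a/q)_n}{(q)_n}(xcq)^n=\sum_{n=0}^\infty \frac{(a/q)_n}{(q)_n}\,c^n q^n\, x^n,$$
which is valid whenever $|xcq|<1$.

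Next I would differentiate this power series $r$ times with respect to $x$. Since $\frac{d^r}{dx^r}x^n = r!{n\choose r}x^{n-r}$, with the convention that this vanishes for $n<r$, term-by-term differentiation produces
$$\frac{d^r}{dx^r}\frac{(xac)_\infty}{(xcq)_\infty}=r!\sum_{n=r}^\infty {n\choose r}\frac{(a/q)_n}{(q)_n}\,c^n q^n\, x^{n-r}.$$
Evaluating at $x=1$ and noting that ${n\choose r}=0$ for $0\le n<r$, so that the summation index may be lowered to $0$, gives exactly the asserted identity.

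The only point genuinely requiring care — and the step I would treat as the main obstacle — is justifying the interchange of differentiation with the infinite sum and the evaluation at $x=1$. This is legitimate because the hypotheses $|a|<1$ and $|cq|<1$ ensure that the series represents the function on the disk $|xcq|<1$, i.e. $|x|<|cq|^{-1}$, whose radius strictly exceeds $1$. Hence on a closed subdisk of that region containing $x=1$ the power series converges uniformly, so it may be differentiated any number of times term by term and the derivatives evaluated at the boundary-interior point $x=1$, which completes the argument.
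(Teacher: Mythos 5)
Your proposal is correct and follows essentially the same route as the paper: apply the $q$-binomial theorem with $z=xcq$, $A=a/q$ to expand the quotient as a power series in $x$, differentiate termwise $r$ times, and set $x=1$. The only difference is that you additionally justify the termwise differentiation via uniform convergence on $|x|<|cq|^{-1}$, which the paper leaves implicit.
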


\begin{proof}
Using the $q$-binomial theorem \eqref{q binomial thm}, we have 
\begin{align*}
\frac{(x a c)_\infty}{(x c q)_\infty}=\sum_{n=0}^\infty \frac{(a/q)_n}{(q)_n}(x c q)^n.
\end{align*}
Differentiating both sides with respect $x$ successively $r$ times,  we have
\begin{align*}
\frac{d^{r}}{dx^{r}}\left[\frac{(x ac)_\infty}{(x c q)_\infty}\right]&=\sum_{n=0}^\infty  \frac{(a/q)_n}{(q)_n}n(n-1)\cdots(n-r+1)x^{n-r}c^n q^n \\ &= r!\sum_{n=0}^\infty \frac{(a/q)_n}{(q)_n} {n \choose r} x^{n-r}c^n q^n.
\end{align*}
Letting $x \rightarrow 1$ in the above equation, we get the result.
\end{proof}

%\begin{lemma}\label{lemma 5}
%For $r\in \mathbb{N},~|a|<1$ and $|c q|<1$, we have
%\begin{align}
%\frac{d^{r}}{dx^{r}}\left[\frac{(x ac/q)_\infty}{(x c)_\infty}\right]_{x=1} = r!\sum_{n=0}^\infty {n\choose r} \frac{(a/q)_n}{(q)_n}c^n.
%\end{align}
%\end{lemma}
%
%
%\begin{proof}
%Using the $q$-binomial theorem \eqref{q binomial thm}, we have 
%\begin{align*}
%\frac{(x ac/q)_\infty}{(x c)_\infty}=\sum_{n=0}^\infty \frac{(a/q)_n}{(q)_n}(x c)^n.
%\end{align*}
%Differentiate successively $r$ times with respect to $x$ both side, we have
%\begin{align*}
%\frac{d^{r}}{dx^{r}}\left[\frac{(x ac/q)_\infty}{(x c)_\infty}\right]&=\sum_{n=0}^\infty  \frac{(a/q)_n}{(q)_n}n(n-1)\cdots(n-r+1)x^{n-r}c^n \\ &= r!\sum_{n=0}^\infty \frac{(a/q)_n}{(q)_n} {n \choose r} x^{n-r}c^n.
%\end{align*}
%Letting $x \rightarrow 1$ in the above equation, we get the result.
%\end{proof}
Now let us consider the following function
\begin{align} \label{new t function}
T_{r,a,c}=T_{r,a,c}(x,q):=\sum_{n=1}^\infty \frac{c^r q^{nr}}{(1-x cq^{n})^r}-\sum_{n=0}^\infty \frac{a^r c^r q^{nr}}{(1-x acq^{n})^r}.
\end{align}
One can easily verify that
\begin{align}\label{differentiation of new t function}
\frac{d}{dx}T_{r,a,c}(x,q)=rT_{r+1,a,c}(x,q).
\end{align}

\begin{lemma} \label{lemma 6}
For each $i\in \mathbb{N}$, $a,c \in \mathbb{C}$ and $|q|<1$ there exists a polynomial $N_i(x_1, x_2, \dots, x_i)\in \mathbb{Q}[x_1, x_2,  \dots, x_i]$  of degree $i$ such that 
\begin{align*}
\frac{d^i}{dx^i}\left[ \frac{(x ac)_\infty}{(x c q)_\infty}\right]=\frac{(x ac )_\infty}{(x cq)_\infty} N_i(T_{1,a,c},T_{2,a,c},\dots, T_{i,a,c}).
\end{align*}
\end{lemma}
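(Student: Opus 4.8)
The plan is to argue by induction on $i$, the engine of the whole argument being the observation that $T_{1,a,c}$ is precisely the logarithmic derivative of $F(x) := (xac)_\infty/(xcq)_\infty$. First I would write $F$ as the infinite product $\prod_{n\ge 0}(1-xacq^n)\big/\prod_{n\ge 1}(1-xcq^n)$, take logarithms, and differentiate term by term to obtain
\begin{align*}
\frac{F'(x)}{F(x)} = \sum_{n=1}^\infty \frac{cq^n}{1-xcq^n} - \sum_{n=0}^\infty \frac{acq^n}{1-xacq^n} = T_{1,a,c}(x,q),
\end{align*}
which is exactly the $r=1$ instance of the definition \eqref{new t function}. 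Hence $F'(x) = F(x)\,T_{1,a,c}$, which settles the base case $i=1$ with $N_1(x_1) = x_1$, a polynomial of degree $1$.

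For the inductive step I would assume $\frac{d^i}{dx^i}F(x) = F(x)\,N_i(T_{1,a,c},\dots,T_{i,a,c})$ for some $N_i \in \mathbb{Q}[x_1,\dots,x_i]$ of degree $i$, and differentiate once more. Using both $F'(x) = F(x)\,T_{1,a,c}$ and the relation $\frac{d}{dx}T_{r,a,c} = r\,T_{r+1,a,c}$ recorded in \eqref{differentiation of new t function}, the chain rule gives
\begin{align*}
\frac{d^{i+1}}{dx^{i+1}}F(x) = F(x)\left[ T_{1,a,c}\,N_i + \sum_{r=1}^i r\,T_{r+1,a,c}\,\frac{\partial N_i}{\partial x_r}\right],
\end{align*}
where $N_i$ and its partial derivatives are evaluated at $(T_{1,a,c},\dots,T_{i,a,c})$. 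This dictates the definition
\begin{align*}
N_{i+1}(x_1,\dots,x_{i+1}) := x_1\,N_i(x_1,\dots,x_i) + \sum_{r=1}^i r\,x_{r+1}\,\frac{\partial N_i}{\partial x_r}(x_1,\dots,x_i),
\end{align*}
which manifestly lies in $\mathbb{Q}[x_1,\dots,x_{i+1}]$ and yields $\frac{d^{i+1}}{dx^{i+1}}F = F\,N_{i+1}(T_{1,a,c},\dots,T_{i+1,a,c})$, completing the inductive step for the representation.

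It then remains to confirm the degree claim. In the recursion the term $x_1 N_i$ has degree $i+1$, while each summand $x_{r+1}\,\partial N_i/\partial x_r$ has degree $(i-1)+1 = i$; since these two contributions lie in distinct total degrees there can be no cancellation in top degree, so $N_{i+1}$ has degree exactly $i+1$. Tracking the leading term by induction shows it equals $x_1^{i+1}$, which never vanishes, so the degree genuinely does not drop. The one substantive step is the base case---identifying $T_{1,a,c}$ with $F'/F$---after which everything reduces to a routine Leibniz/chain-rule induction; the only bookkeeping needing care is the degree count, namely checking that the degree is exactly $i$ rather than merely at most $i$, which the degree-separation argument above resolves.
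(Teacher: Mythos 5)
Your proof is correct and follows essentially the same route as the paper: identify $T_{1,a,c}$ as the logarithmic derivative of $(xac)_\infty/(xcq)_\infty$, then iterate using $\frac{d}{dx}T_{r,a,c}=rT_{r+1,a,c}$. In fact your write-up is more complete than the paper's, which only computes the cases $i=1,2$ explicitly and asserts the general case by ``continuing''; your explicit recursion $N_{i+1}=x_1N_i+\sum_{r=1}^i r\,x_{r+1}\,\partial N_i/\partial x_r$ and the degree-separation argument supply the inductive step the paper leaves implicit.
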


\begin{proof}
We start with the following product representation
$$
\frac{(x ac)_\infty}{(xc q)_\infty} = \prod_{n=0}^\infty  \frac{1-x ac q^{n}}{1-x  cq^{n+1}}.
$$
Then differentiating both sides with respect to $x$,  we see that
\begin{align*}
\frac{d}{dx}\left[ \frac{(x ac)_\infty}{(x c q)_\infty}\right]&=\frac{d}{dx}\left[\exp \left\{ \log \left( \prod_{n=0}^\infty  \frac{1-x ac q^{n}}{1-x  cq^{n+1}}\right) \right\}  \right]\\ &= \prod_{n=0}^\infty  \frac{1-x ac q^{n}}{1-x  cq^{n+1}} \frac{d}{dx} \left[\sum_{n=0}^\infty \log \left( \frac{1-x ac q^{n}}{1-x c q^{n+1}}\right) \right]\\ &= \frac{(x ac)_\infty}{(x c q)_\infty} T_{1,a,c}(x, q)\\ &= \frac{(x ac)_\infty}{(x c q)_\infty} N_1(T_{1,a,c}),
\end{align*}
with $N_1(x_1)=x_1$ and $T_{1,a,c}$ is defined as in \eqref{new t function}.  Further,   differentiating the above equality with respect to $x$,  it reduces to 
\begin{align*}
\frac{d^2}{dx^2}\left[ \frac{(x ac)_\infty}{(x c q)_\infty}\right]=\frac{(x ac)_\infty}{(x c q)_\infty} T_{1,a,c}^2(x, q)+ \frac{(x ac)_\infty}{(x c q)_\infty} \frac{d}{dx} T_{1,a,c}(x, q).
\end{align*}
Now utilizing \eqref{differentiation of new t function},  one sees that 
\begin{align*}
\frac{d^2}{dx^2}\left[ \frac{(x ac )_\infty}{(x c q)_\infty}\right] = \frac{(x ac )_\infty}{(x c q)_\infty} T_{1,a,c}^2(x, q)+ \frac{(x ac)_\infty}{(x c q)_\infty}  T_{2,a,c}(x, q) = \frac{(x ac)_\infty}{(x c q)_\infty} N_2(T_{1,a,c}, T_{2,a,c}),
\end{align*}
with $N_2(x_1, x_2) =x_1^2+x_2$.  Continuing differentiating with respect to $x$ and using the relation \eqref{differentiation of new t function},  one can complete the proof of Corollary \ref{lemma 6}.  
\end{proof}

\begin{lemma}\label{lemma 7} Let $\mathfrak{S}_{m,a,c}(q)$ and $T_{r, a,c}(x,q)$  be the functions defined as in \eqref{new generalization of divisor function of gupta and kumar} and \eqref{new t function},  respectively. For every $r\in \mathbb{N}$,  we have
\begin{align*}
T_{r,a,c}(1,q) = \sum_{h=0}^{r-1}Q_{h,r} \mathfrak{S}_{h,a,c}(q),
\end{align*}
where $Q_{h, r} \in \mathbb{Q}$.  
\end{lemma}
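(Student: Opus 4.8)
The plan is to evaluate $T_{r,a,c}(1,q)$ directly by expanding both series in \eqref{new t function} and collecting terms. The essential tool is the negative binomial expansion $\frac{z^r}{(1-z)^r}=\sum_{l\ge r}\binom{l-1}{r-1}z^l$, valid for $|z|<1$. Applying it with $z=cq^n$ to the first sum and with $z=acq^n$ to the second (noting $a^rc^rq^{nr}=(acq^n)^r$), I would write each summand of $T_{r,a,c}(1,q)$ as a power series in $q^n$.

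Next I would interchange the order of summation, which is legitimate by absolute convergence under the hypotheses $|ac|<1$ and $|cq|<1$, and carry out the inner geometric sums over $n$. For the first series the index runs over $n\ge 1$, giving $\sum_{n\ge 1}q^{nl}=\frac{q^l}{1-q^l}$, so that
\begin{align*}
\sum_{n=1}^\infty \frac{c^r q^{nr}}{(1-cq^n)^r}=\sum_{l\ge 1}\binom{l-1}{r-1}\frac{c^l q^l}{1-q^l}.
\end{align*}
For the second series the index runs over $n\ge 0$, giving $\sum_{n\ge 0}q^{nl}=\frac{1}{1-q^l}=1+\frac{q^l}{1-q^l}$, so that
\begin{align*}
\sum_{n=0}^\infty \frac{a^r c^r q^{nr}}{(1-acq^n)^r}=\sum_{l\ge 1}\binom{l-1}{r-1}(ac)^l+\sum_{l\ge 1}\binom{l-1}{r-1}\frac{(ac)^l q^l}{1-q^l}.
\end{align*}
In both displays I have used that the polynomial $\binom{l-1}{r-1}=\frac{1}{(r-1)!}\prod_{i=1}^{r-1}(l-i)$ vanishes at $l=1,\dots,r-1$, which lets me start each $l$-sum at $1$.

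The final step is to expand $\binom{l-1}{r-1}$, a polynomial of degree $r-1$ in $l$, as $\sum_{h=0}^{r-1}Q_{h,r}\,l^h$ with $Q_{h,r}\in\mathbb{Q}$ (these coefficients are expressible through the Stirling numbers of the first kind). Substituting this expansion and matching against the definitions \eqref{define S_{m,c}} and \eqref{define R_{m,ac}}, together with $\textup{Li}_{-h}(ac)=\sum_{l\ge 1}l^h(ac)^l$, I obtain $\sum_{l\ge 1}l^h\frac{c^lq^l}{1-q^l}=S_{h,c}(q)$, $\sum_{l\ge 1}l^h(ac)^l=\textup{Li}_{-h}(ac)$, and $\sum_{l\ge 1}l^h\frac{(ac)^lq^l}{1-q^l}=R_{h,ac}(q)-\textup{Li}_{-h}(ac)$. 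Feeding these into $T_{r,a,c}(1,q)$, the bare polylogarithm terms coming from the two pieces cancel exactly, leaving $T_{r,a,c}(1,q)=\sum_{h=0}^{r-1}Q_{h,r}\bigl(S_{h,c}(q)-R_{h,ac}(q)\bigr)=\sum_{h=0}^{r-1}Q_{h,r}\,\mathfrak{S}_{h,a,c}(q)$ by \eqref{new generalization of divisor function of gupta and kumar}, which is the claim.

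The only genuinely delicate points are the justification of the interchange of summations (guaranteed by absolute convergence from $|ac|<1$ and $|cq|<1$) and, more importantly, the bookkeeping that makes the $\textup{Li}_{-h}(ac)$ contributions cancel; this is precisely why the constant $\textup{Li}_{-m}(ac)$ was built into the definition \eqref{define R_{m,ac}} of $R_{m,ac}$. Everything else is a routine manipulation of geometric series.
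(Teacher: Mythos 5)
Your proof is correct and rests on the same underlying strategy as the paper's: expand each summand of $T_{r,a,c}(1,q)$ as a power series in $cq^n$ (resp.\ $acq^n$) whose coefficients are a polynomial of degree $r-1$ in the exponent, interchange the order of summation, and match the resulting sums against the definitions of $S_{h,c}(q)$ and $R_{h,ac}(q)$. Your execution is tidier, though. You apply the negative binomial series once, $z^r/(1-z)^r=\sum_{l\ge r}\binom{l-1}{r-1}z^l$, and then read off the rational coefficients $Q_{h,r}$ by expanding $\binom{l-1}{r-1}$ as a polynomial in $l$; the paper instead first writes $z^r/(1-z)^r=z\,(1-(1-z))^{r-1}/(1-z)^r$, applies the ordinary binomial theorem to produce a sum of terms $z/(1-z)^{r-j}$, expands each of those separately, and finally converts the resulting products $m(m+1)\cdots(m+r-j-2)$ into powers of $m$ via Stirling numbers of the first kind. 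Both routes account for the extra $\textup{Li}_{-h}(ac)$ contribution arising because the second series starts at $n=0$, which is exactly why that constant sits in the definition of $R_{m,ac}$; your version simply avoids the two-step binomial manipulation and the Stirling-number bookkeeping. One cosmetic remark: what you describe as a ``cancellation of the bare polylogarithm terms'' is really the $n=0$ geometric-sum contribution being absorbed into the $\textup{Li}_{-h}(ac)$ already present in $R_{h,ac}(q)$ -- there is no cross-cancellation between the two series of $T_{r,a,c}$ -- but your computation handles this correctly.
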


\begin{proof}
From \eqref{new t function}, we have 
\begin{align}\label{new t function at 1}
T_{r,a,c}(1,q)=\sum_{n=1}^\infty \frac{c^r q^{nr}}{(1- cq^{n})^r}-  \sum_{n=0}^\infty \frac{a^r c^r q^{nr}}{(1- acq^{n})^r}.
\end{align}
Using binomial theorem twice,  for $|cq|<1$,  one can show that 
\begin{align}\label{new t function in 2nd form}
\sum_{n=1}^\infty \frac{c^r q^{nr}}{(1- cq^{n})^r} & =  \sum_{n=1}^\infty \frac{ c q^n (1- (1- cq^n))^{r-1}}{(1- cq^{n})^r}  \nonumber \\
& =   \sum_{n=1}^\infty \frac{c q^n}{ (1- cq^n )^r} \sum_{j=0}^{r-1} { {r-1} \choose j} (-1)^j (1 -c q^n)^j \nonumber \\
& = \sum_{j=0}^{r-1} { {r-1} \choose j} (-1)^j \sum_{n=1}^\infty \frac{c q^n}{(1- cq^n)^{r-j}} \nonumber \\
& = \sum_{j=0}^{r-1} { {r-1} \choose j} (-1)^j \sum_{n=1}^\infty \sum_{m=0}^\infty { r-j+m-1 \choose m} ( c q^n)^{m+1} \nonumber \\
& =  \sum_{j=0}^{r-1} { {r-1} \choose j}  \frac{(-1)^j}{(r-j-1)!} \sum_{n=1}^\infty \sum_{m=1}^\infty (c q^n)^m m (m+1) \cdots (m+r-j-2) \nonumber \\
& =  \sum_{j=0}^{r-1} { {r-1} \choose j}  \frac{(-1)^{r-1}}{(r-j-1)!} \sum_{n=1}^\infty \sum_{m=1}^\infty (c q^n)^m (-m) (-m-1) \cdots (-m-r+j+2).
\end{align}
Now using the definition of the Stirling numbers of the first kind,  that is,
\begin{align*}
x(x-1)\cdots(x-j+1)=\sum_{h=0}^j s(j,  h)x^h,
\end{align*}
the above expression in \eqref{new t function in 2nd form} can be written as follows: 
\begin{align}
 & \sum_{j=0}^{r-1} { {r-1} \choose j}  \frac{(-1)^{r-1}}{(r-j-1)!} \sum_{n=1}^\infty \sum_{m=1}^\infty  (c q^n)^m \sum_{h=0}^{r-j-1} s(r-j-1,  h) (-m)^h \nonumber \\
  & = \sum_{j=0}^{r-1} { {r-1} \choose j}  \frac{(-1)^{r-1}}{(r-j-1)!}  \sum_{h=0}^{r-j-1}(-1)^h s(r-j-1,  h) \sum_{n=1}^\infty \sum_{m=1}^\infty  m^h (c q^n)^m \nonumber \\
  & = \sum_{j=0}^{r-1} { {r-1} \choose j}  \frac{(-1)^{r-1}}{(r-j-1)!}  \sum_{h=0}^{r-j-1}(-1)^h s(r-j-1,  h) S_{h,c}(q) \nonumber \\
  & = \sum_{h=0}^{r-1}  \sum_{j=0}^{r-h-1} { {r-1} \choose j}  \frac{(-1)^{h+r-1}}{(r-j-1)!} s(r-j-1, h) S_{h,c}(q),  \nonumber \\
  & = \sum_{h=0}^{r-1} Q_{h,  r} S_{h,c}(q), \label{first}
\end{align}
 where $S_{h,c}(q)$ is defined as in \eqref{define S_{m,c}} and the rational numbers $Q_{h,r}$ are defined by
\begin{align*}
Q_{h,r} = \sum_{j=0}^{r-h-1} \frac{(-1)^{r+h-1}}{(r-j-1)!} {r-1 \choose j} s(r-j-1,  h).  
\end{align*}
In a similar way,   we can prove that the second sum in \eqref{new t function at 1} is 
\begin{align}\label{2nd sum}
 \sum_{n=0}^\infty \frac{a^r c^r q^{nr}}{(1- acq^{n})^r} 
 =  \sum_{h=0}^{r-1} Q_{h,  r}  \sum_{n=0}^\infty \sum_{m=1}^\infty m^h ( a c q^n)^m.
\end{align}
Now one can show that 
\begin{align}\label{second sum}
 \sum_{n=0}^\infty \sum_{m=1}^\infty m^h ( a c q^n)^m = \textup{Li}_{-h}( ac) + S_{h,  ac}(q). 
\end{align}
Thus,  substituting \eqref{second sum} in \eqref{2nd sum},  we have
\begin{align}
\sum_{n=0}^\infty \frac{a^r c^r q^{nr}}{(1- acq^{n})^r}  & = \sum_{h=0}^{r-1} Q_{h, r} R_{h,  ac}(q),  \label{second}
\end{align}
where $R_{h,  ac}(q)$ is  as  defined in \eqref{define R_{m,ac}}.  
Utilizing \eqref{first} and \eqref{second} in \eqref{new t function at 1} and together with the definition \eqref{new generalization of divisor function of gupta and kumar} of $\mathfrak{S}_{h,a,c}(q)= S_{h,c}(q) - R_{h, ac}(q)$,  we finish the proof of this lemma.  

\end{proof}

\begin{proof}[Theorem \ref{polynomial version generalization of gupta-kumar}][]
Substituting $b=q^2$ and taking $\alpha$ to be a natural number, say $k+1$, in Corollary \ref{2-var of RR}, we have
\begin{align*}
\sum_{n=0}^{\infty}\frac{(q^2/a)_n a^n}{(1-cq^n)^{k+1}(q)_n} &=\sum_{n=0}^{\infty}c^n{k+n \choose n}\frac{(q^{n+2})_{\infty}}{(aq^n)_{\infty}}\\
&= \frac{(q)_\infty}{(a/q)_\infty} \sum_{n=1}^{\infty} c^{n-1} {k+n-1 \choose k}\frac{(a/q)_{n}}{(q)_{n}}.
\end{align*}
Further,  replace $c$ by $cq$ and use Chu-Vandemonde identity \eqref{Chu-Vandermonde} in the above equation to get
\begin{align*}
\sum_{n=0}^{\infty}\frac{(q^2/a)_n a^n}{(1-cq^{n+1})^{k+1}(q)_n} &=\frac{1}{cq}\frac{(q)_\infty}{(a/q)_\infty} \sum_{n=1}^{\infty} \sum_{r=1}^k {n \choose r} {k-1 \choose k-r}  c^{n} q^n \frac{(a/q)_{n}}{(q)_{n}} \\
&= \frac{1}{cq} \frac{(q)_\infty}{(a/q)_\infty} \sum_{r=1}^k {k-1 \choose k-r}  \sum_{n=1}^{\infty} {n \choose r} \frac{(a/q)_{n}}{(q)_{n}} c^n q^n. 
\end{align*}
Now we apply Lemma \ref{lemma 5} in the right side of the above expression to see that
\begin{align*}
\sum_{n=0}^{\infty}\frac{(q^2/a)_n a^n}{(1-cq^{n+1})^{k+1}(q)_n} = \frac{1}{c q} \frac{(q)_\infty}{(a/q)_\infty} \sum_{r=1}^k {k-1 \choose k-r} \frac{1}{r!}\left[\frac{d^{r}}{dx^{r}}\frac{(x ac)_\infty}{(x c q)_\infty}\right]_{x=1}. 
\end{align*}
At this moment,  employing Lemma \ref{lemma 6},  we obtain
\begin{align*}
\sum_{n=0}^{\infty}\frac{(q^2/a)_n a^n}{(1-cq^{n+1})^{k+1}(q)_n} &= \frac{1}{cq} \frac{(q)_\infty}{(a/q)_\infty} \sum_{r=1}^k {k-1 \choose k-r} \frac{1}{r!} \left[\frac{(ac )_\infty}{(c q)_\infty} N_r(T_{1,a,c},T_{2,a,c},\cdots, T_{r,a,c})\right]. 
\end{align*}
Now utilizing Lemma \ref{lemma 7},  we can find a polynomial $P_k[x_1,  x_2,  \dots,  x_k] \in \mathbb{Q}[x_1,  x_2,  \dots,  x_k]$ of degree $k$ such that
\begin{align*}
\sum_{n=0}^{\infty}\frac{(q^2/a)_n a^n}{(1-cq^{n+1})^{k+1}(q)_n} = \frac{1}{c q} \frac{(q)_\infty (ac)_\infty}{(a/q)_\infty (c q)_\infty}P_{k}(\mathfrak{S}_{0,a,c}(q),\mathfrak{S}_{1,a,c}(q), \dots, \mathfrak{S}_{k-1,a,c}(q)).
\end{align*}
Finally,  multiplying both sides by $(1-a/q)$ and upon simplification,  we  complete the proof of Theorem \ref{polynomial version generalization of gupta-kumar}.
\end{proof}

\begin{proof}[Theorem \ref{new expression to ABEM}][]
Letting $a \rightarrow 0$ and $\alpha =1$ in Corollary \ref{2-var of RR} gives 
\begin{align*}
\sum_{n=0}^\infty \frac{(-1)^n q^{\frac{n(n-1)}{2}} b^n}{(q)_n(1-c q^n)} = \sum_{n=0}^\infty c^n (b q^n)_\infty.  
\end{align*}
Further,  replacing $c$ by $cq$ and $b$ by $q^2$,  the above identity reduces to 
\begin{align*}
\sum_{n=0}^\infty \frac{(-1)^n q^{\frac{n(n+1)}{2}} q^n}{(q)_n(1-c q^{n+1})} = \sum_{n=0}^\infty c^n q^n (q^{n+2})_\infty.  
\end{align*}
Changing the variable $n$ by $n-1$ on both sides and simplifying,  we get
\begin{align*}
\sum_{n=1}^\infty \frac{(-1)^{n-1} q^{\frac{n(n-1)}{2}} }{(q)_{n-1}}  \frac{c q^n}{(1-c q^{n})}= \sum_{n=1}^\infty c^n q^n (q^{n+1})_\infty.  
\end{align*}
Now apply the differential operator $D=c\frac{d}{dc}$,  $m$-times on both sides of the above identity to obtain
\begin{align}\label{identity with differential operator}
\sum_{n=1}^\infty \frac{(-1)^{n-1}q^{n \choose 2}}{(q)_{n-1}}D^m\left(\frac{cq^n}{1-cq^n}\right)= \sum_{n=1}^\infty n^m c^n q^n(q^{n+1})_\infty. 
\end{align}
To simplify further,  we shall make use of the following property of Eulerian polynomials: 
\begin{align*}
\frac{A_m(x)}{(1-x)^{m+1}}=\sum_{j=0}^\infty x^j(j+1)^m.
\end{align*}
Letting $x=c q^n$ in the above identity,  one can easily verify that
\begin{align}\label{Eulerian polynomial relation}
\frac{cq^nA_m(cq^n)}{(1-cq^n)^{m+1}} = D^m\left(\frac{cq^n}{1-cq^n}\right).
\end{align}
Now substituting \eqref{Eulerian polynomial relation} in \eqref{identity with differential operator},   we obtain
\begin{align*}
 \sum_{n=1}^\infty \frac{(-1)^{n-1}q^{n\choose 2}cq^n A_m(cq^n)}{(1-cq^n)^{m+1}(q)_{n-1}} = \sum_{n=1}^\infty n^m c^n q^n(q^{n+1})_\infty. 
\end{align*}
This completes the proof of the first equality in Corollary \ref{2-var of RR}.  The second equality has already been proved by the present authors in \cite[Theorem 2.2]{ABEM} (see \eqref{2nd gen_Uchimura by ABEM}).    
\end{proof}
\begin{proof}[Corollary \ref{Entry 4 with new term}][]
Letting $m=1$ in Theorem \ref{new expression to ABEM},  it simplifies to 
\begin{align*}
\sum_{n=1}^\infty n c^n q^n(q^{n+1})_\infty = \sum_{n=1}^\infty \frac{(-1)^{n-1} q^{{n \choose 2}}(cq^n)A_1(cq^n)}{(1-cq^n)^2 (q)_{n-1}}=\frac{(q)_\infty}{(cq)_\infty}Y_1(K_{1,c}(q)).
\end{align*}
Note that the Eulerian polynomial $A_1(x)=1$ and the Bell polynomial $Y_1(x) = x$.  Moreover,  using the definition \eqref{M_m and K_(m+1)} of $K_{1,c}(q)$,  the above identity reduces to 
\begin{align*}
\sum_{n=1}^\infty n c^n q^n(q^{n+1})_\infty = \sum_{n=1}^\infty \frac{(-1)^{n-1} q^{{n \choose 2}+n}c}{(1-cq^n)^2 (q)_{n-1}}= \frac{(q)_\infty}{(cq)_\infty} \sum_{n=1}^{\infty} \sigma_{0, c}(n) q^n. 
\end{align*} 
Further,  multiplying by $ \frac{(cq)_\infty}{(q)_\infty}$ throughout the identity and using the definition of $\sigma_{0,c}(n)$,  we obtain
\begin{align*}
  \frac{(cq)_\infty}{(q)_\infty} \sum_{n=1}^\infty n c^n q^n(q^{n+1})_\infty = \frac{(cq)_\infty}{(q)_\infty} \sum_{n=1}^\infty \frac{(-1)^{n-1}cq^{{\frac{n(n+1)}{2}}}}{(1-cq^n)^2 (q)_{n-1}}=\sum_{n=1}^\infty \frac{c^n q^n}{1-q^n}.
\end{align*}
%Now as we can see that the right most sum is what we wanted.  
Now our main aim is to simplify the middle term of the above identity since the left most and  the right most expressions are exactly in the same form as we wanted in \eqref{Uchimura-type for Ramanujan's identity}.  Changing the variable $n$ by $n+1$,  the middle expression can be rewritten in the following way:
\begin{align*}
& \frac{(cq)_\infty}{(q)_\infty} \sum_{n=0}^\infty \frac{(-1)^{n}cq^{{\frac{(n+1)(n+2)}{2}}}}{(1-cq^{n+1})^2 (q)_{n}} \\ 
& = cq \frac{(cq)_\infty}{(q)_\infty} \sum_{n=0}^\infty \frac{(-1)^{n}q^{\frac{n(n+1)}{2}} (cq^2)_{n-1}^2 q^n}{(q)_{n} (cq^2)_n^2} \\
&=\frac{(cq)_\infty}{(q)_\infty}\frac{cq}{(1-cq)^2} \lim_{\gamma \rightarrow 0} \sum_{n=0}^\infty \frac{(cq)_{n} (cq)_n (q/\gamma)_n (\gamma q)^n}{(cq^2)_n(cq^2)_n(q)_{n}}, \\ &=\frac{(cq)_\infty}{(q)_\infty}\frac{cq}{(1-cq)^2}  \lim_{\gamma \rightarrow 0} {}_{3}\phi_{2}\left[\begin{matrix} q/\gamma, & cq, & cq \\ & cq^2, & cq^2 \end{matrix} \, ; q, \gamma q  \right].
\end{align*}
Now applying ${}_{3}\phi_{2}$ transformation formula  \eqref{3_phi_2} to the above expression yields that
\begin{align*}
\frac{(cq)_\infty}{(q)_\infty}\frac{cq}{(1-cq)^2}  \lim_{\gamma \rightarrow 0} \frac{(\gamma cq)_\infty (q^2)_\infty}{(cq^2)_\infty(\gamma q)_\infty }  {}_{3}\phi_{2}\left[\begin{matrix} q/\gamma, & q, & q \\ & cq^2, & q^2 \end{matrix} \, ; q, \gamma cq  \right].
\end{align*}
Further,  upon simplification,  it reduces to 
\begin{align*}
\frac{1}{(1-q)}\frac{cq}{(1-cq)} \sum_{n=0}^\infty \frac{(-1)^n (q)_{n} q^{\frac{n(n+1)}{2}}(cq)^{n}  }{(cq^2)_{n} (q^2)_{n}}  
 & =  \sum_{n=0}^\infty \frac{(-1)^n  q^{\frac{n(n+1)}{2}}(cq)^{n+1}  }{(cq)_{n+1} (1 -q^{n+1}) },  \\
& =  \sum_{n=1}^\infty \frac{(-1)^{n-1}  q^{\frac{n(n-1)}{2}}(cq)^{n}  }{(cq)_{n} (1 -q^{n}) },
\end{align*}
which is exactly same as the middle expression in Corollary \ref{Entry 4 with new term}.  This completes the proof.  
\end{proof}

\begin{proof}[Corollary \ref{Uchimura's identity with new expression}][]
The proof of this corollary immediately follows  by substituting $c=1$ in Theorem \ref{new expression to ABEM}.  

\end{proof}

\begin{proof}[Theorem \ref{limit in polynomial form}][]
First,  we define the following  two generating functions for the sequence of polynomials $\{ t_n(a, q) \}$ and $\{f(n) \}$ as 
\begin{align*}
T(a,\alpha,q)&:=\sum_{n=1}^\infty t_n(a,q) \alpha^n,  \\
F(\alpha)&:=\sum_{n=1}^\infty f(n)\alpha^n.
\end{align*}
From \eqref{define t_n}, we see that
\begin{align*}
(1-aq^n)t_n(a,q)=(f(n)-af(n+1)) + (1-q^{n-1})t_{n-1}(a,q).
\end{align*}
Multiplying both sides by $\alpha^n$ and summing over all the natural numbers $n$, we get
\begin{align*}
T(a,\alpha,q)=\frac{(1-a/\alpha)}{(1-\alpha)}F(\alpha) - \frac{(1-a/\alpha)\alpha}{(1-\alpha)}T(a,\alpha q,q).
\end{align*}
The above recurrence relation leads us to
\begin{align*}
T(a,\alpha,q)=\sum_{n=1}^\infty \frac{(-1)^{n-1} (a/\alpha q^{n-1})_n F(\alpha q^{n-1}) \alpha^{n-1} q^{\frac{(n-1)(n-2)}{2}} }{ (\alpha)_n}.
\end{align*}
Substitute $\alpha = q$ to see that
\begin{align}\label{T(a,q,q)}
T(a,q,q)=\sum_{n=1}^\infty t_n(a,q) q^n &= \sum_{n=1}^\infty \frac{(-1)^{n-1} (a/q^{n})_n F(q^{n}) q^{\frac{n(n-1)}{2}}}{ (q)_n} = -\sum_{n=1}^\infty \frac{(q/a)_n F(q^{n}) (a/q)^n}{ (q)_n}.
\end{align}
Utilizing \eqref{define t_n},  it can be shown that
\begin{align*}
t_n(a,q)-t_{n-1}(a,q)= f(n)-af(n+1)-q^{n-1}t_{n-1}(a,q)+aq^nt_n(a,q).
\end{align*}
Taking summation over $n$ from $1$ to $\ell$ on both sides and upon simplification,  one can show that 
\begin{align}\label{limit in term of T(a,q,q)}
\lim_{\ell \rightarrow \infty} \left( \sum_{n=1}^\ell f(n) - \frac{a}{1-a}f(\ell+1) - \left(\frac{1-aq^\ell}{1-a}\right) t_\ell(a,q) \right) = \sum_{n=1}^\i t_n(a,q)q^n=T(a,q,q).
\end{align}
Now we make use of another form of generating function for $f(n)$ given in \cite[p.~50]{andrewssiam1997}, namely,
\begin{align*}
F(\a)=\sum_{m\geq 0}\sum_{k\geq m}c_k \tilde{s}(k,m)m!\frac{\a^m}{(1-\a)^{m+1}}-c_0,
\end{align*}
where $\tilde{s}(k,m)$ denotes the Stirling numbers of the second kind.  
Substituting $\a=q^n$ and upon simplification,  one can show that
\begin{align}\label{F(q^n)}
F(q^n)=\sum_{m\geq 1} d_m\sum_{j=0}^{m-1}e_{m,j} \frac{q^{n}}{(1-q^n)^{m+1-j}}+c_0\frac{q^n}{1-q^n},
\end{align}
where
\begin{align*}
d_m&=\sum_{k\geq m}c_k \tilde{s}(k,m)m!,~\text{for}~m\geq 1,~d_0=c_0,\\
e_{m,j}&=(-1)^j {m-1 \choose j}.
\end{align*}
Finally,  substituting the expression \eqref{F(q^n)} for $F(q^n)$ in \eqref{T(a,q,q)} and then substituting $T(a,  q, q)$ in \eqref{limit in term of T(a,q,q)}, it follows that 
\begin{align}
\lim_{\ell \rightarrow \infty} \left( \sum_{n=1}^\ell f(n) - \frac{a}{1-a}f(\ell+1) - \left(\frac{1-aq^\ell}{1-a}\right) t_\ell(a,q) \right)&= -\sum_{n=1}^\infty \frac{(q/a)_n F(q^{n}) (a/q)^n}{ (q)_n}\label{limit formula in term of F(q^n)}\\
&=-\sum_{m\geq 1} d_m\sum_{j=0}^{m-1}e_{m,j} \sum_{n=1}^\infty \frac{(q/a)_n a^n}{(1-q^n)^{m+1-j}(q)_n}\nonumber\\
&\hspace{1.5cm} -c_0\sum_{n=1}^\i \frac{(q/a)_na^n}{(1-q^n)(q)_n}\nonumber.
\end{align}
Now from the definition \eqref{definition h_j}  of $h_j$ and the polynomial $P_k(a, q)$ defined in \eqref{guptakumar polynomial},  it is clear that
\begin{align*}
\lim_{\ell \rightarrow \infty} \left( \sum_{n=1}^\ell f(n) - \frac{a}{1-a}f(\ell+1) - \left(\frac{1-aq^\ell}{1-a}\right) t_\ell(a,q) \right)&= \sum_{m\geq 1} d_m\sum_{j=0}^{m-1}e_{m,j}P_{m+1-j}(a,q)+c_0 P_1(a,  q)\\
&=\sum_{j=1}^\i h_j P_j(a,  q).
\end{align*}
This completes the proof of Theorem \ref{limit in polynomial form}.
\end{proof}

\begin{proof}[Theorem \ref{limit formula for periodic sequence}][]
The main idea of the proof of this result goes along the same lines as in \cite{Bringmann,  GK21}.  However,  for completeness we give necessary details of the proof.  
As $f(n)$ is a periodic function of period $N$,  one can write 
%From \eqref{define periodic f(n)} and \eqref{define coefficient of periodic f(n)}, we have
\begin{align}\label{diff rep f(n)}
f(n)= \frac{1}{N} \sum_{j=1}^N f(j) \sum_{k=0}^{N-1} \z_N^{(n-j)k},
\end{align}
where $\zeta_N= e^{\frac{2\pi i}{N}}$. 
%\begin{align*}
%\sum_{k=0}^{N-1} \z_N^{(n-j)k}= 
%\begin{cases}
%N,~~n\equiv j ~(\text{mod}~ N),\\
%0,~~\text{otherwise},
%\end{cases}
%\end{align*}
%then it is easy to conclude that $f(n)$ is a periodic function of period $N$. 
Now using \eqref{diff rep f(n)},  we can show that the generating function for $f(n)$,  for $|\a|<1$,  is 
\begin{align}\label{F(alpha) when alpha less than 1}
F(\a)= \sum_{n=1}^\infty f(n) \a^n =  \sum_{k=0}^{N-1}\frac{c_k\a}{1-\z_N^k \a},
\end{align}
where $c_k = \frac{1}{N} \sum_{j=1}^N f(j) \zeta_N^{(1-j)k}$.  
In the proof of Theorem \ref{limit in polynomial form},  from equation \eqref{limit formula in term of F(q^n)},  we have seen that
\begin{align*}
\lim_{\ell \rightarrow \infty} \left( \sum_{n=1}^\ell f(n) - \frac{a}{1-a}f(\ell+1) - \left(\frac{1-aq^\ell}{1-a}\right) t_\ell(a,q) \right)&= -\sum_{n=1}^\infty \frac{(q/a)_n F(q^{n}) (a/q)^n}{ (q)_n}.
\end{align*}
Now substituting the expression \eqref{F(alpha) when alpha less than 1} for $F(q^n)$,  we see that
\begin{align*}
\lim_{\ell \rightarrow \infty} &\left( \sum_{n=1}^\ell f(n) - \frac{a}{1-a}f(\ell+1) - \left(\frac{1-aq^\ell}{1-a}\right) t_\ell(a,q) \right)= -\sum_{n=1}^\infty \sum_{k=0}^{N-1} c_k \frac{(q/a)_n a^n}{(1-\z_N^k q^n)(q)_n} \\
&\hspace{4cm}=-c_0 \sum_{n=1}^\i \frac{(q/a)_n a^n}{(1-q^n)(q)_n} -\sum_{k=1}^{N-1} c_k \sum_{n=1}^\infty \frac{(q/a)_n a^n}{(1-\z_N^k q^n)(q)_n}\\
&\hspace{4cm}=c_0 P_1( \mathfrak{S}_{0,a}(q) )-\sum_{k=1}^{N-1} \frac{c_k}{(1-\z_N^k)}\left( {}_{2}\phi_{1}\left( \begin{matrix} q/a, & \z_N^k  \\ & \z_N^kq \end{matrix}  ; q; a \right) - 1 \right),
\end{align*}
where $P_1$ is  defined in \eqref{guptakumar polynomial} and one can verify that $P_1(x)=x$.  Finally, using $q$-Gauss summation formula \eqref{q-Gauss sum}, we have
\begin{align*}
\lim_{\ell \rightarrow \infty} \left( \sum_{n=1}^\ell f(n) - \frac{a}{1-a}f(\ell+1) - \left(\frac{1-aq^\ell}{1-a}\right) t_\ell(a,q) \right) & = c_0 \mathfrak{S}_{0,a}(q) + \sum_{k=1}^{N-1} \frac{c_k}{(1-\z_N^k)} \\
&  - \frac{(q)_\i}{(a)_\i} \sum_{k=1}^{N-1}c_k \frac{(a\z_N^k)_\i}{(\z_N^k)_\i}.
\end{align*}
This finishes the proof of Theorem \ref{limit formula for periodic sequence}.
\end{proof}

\begin{proof}[Corollary \ref{simpler version of limit value}][]
%Since this identity is the corrected version of the identity mentioned in \cite[Corollary 4.6]{GK21} and the right hand side expression of it is exactly same as in \cite[Corollary 4.6]{GK21},  whereas the left hand side limit expression is different from their identity.  However,  as this identity can be obtained from Theorem \ref{limit formula for periodic sequence},  so we omit the proof.  

The right hand side expression of this corollary has already been derived in  \cite[Corollary 4.6]{GK21},  whereas the left side expression is obtained from Theorem \ref{limit formula for periodic sequence}.  This finishes the proof of this corollary.

\end{proof}

\section{Concluding Remarks}
One of the main goals of this paper was to study different generalizations of Uchimura's identity \eqref{Uchimura}.  We named three sides of Uchimura's identity \eqref{Uchimura}  as {\it Uchimura-type} sum,  {\it Ramanujan-type} sum and {\it divisor-type} sum.  In view of these three sides,  we tried to establish a unified theory for generalizations given by Uchimura \eqref{uchimura ideantity polynomial version},  by the present authors \eqref{2nd gen_Uchimura by ABEM},  Dilcher \eqref{dilcher 1},  Andrews-Crippa-Simon \eqref{dilcher 2},  \eqref{ACS}, and Gupta-Kumar \eqref{guptakumar}, \eqref{guptakumar polynomial}.  We showed that all these generalizations can be obtained from our general identity,  i.e.,  Theorem \ref{General F function}.  For example,  at a glance,  it may look as if Uchimura's identity \eqref{Uchimura} and Dilcher's identity \eqref{dilcher 1} are not branches of a bigger tree,  however,  we have shown that both of these identities can be obtained from a single identity,  namely,   Corollary \ref{Uchimura generalization_2 variable}. 
We obtained a one-variable generalization of Gupta-Kumar's identity \eqref{guptakumar polynomial} (see Theorem \eqref{polynomial version generalization of gupta-kumar}),  which is also a two variable generalization of the identity \eqref{dilcher 2} of Andrews,  Crippa and Simon.  

Further,  we established a {\it Ramanun-type} expression for our generalized identity \eqref{2nd gen_Uchimura by ABEM},  and as a by-product,  we found a {\it Ramanujan-type} sum for Uchimura's generalization \eqref{uchimura ideantity polynomial version} and also derived a {\it Uchimura-type} sum for Ramanujan's identity \eqref{entry4}.  

We also corrected an identity \eqref{dilcher's wrong identity} of Dilcher and while correcting it,  we found two sequences of rational numbers involving Stirling numbers of the first kind, namely,  $A(j,r,t)$ \eqref{A(j,r,i)} and $C(k,r,t)$ \eqref{define C(k,r,t)}.  Quite interestingly,  we observed that these numbers obey interesting recurrence relations. (see Theorem \ref{recurrence relation of C(k,r,t) and A(j,r,t)}). 

In Section \ref{application},  we discussed applications of the generalizations of Uchimura's  identities \eqref{Uchimura} in probability theory given by Uchimura and connection with theory of random acyclic digraphs developed by Simon-Crippa-Collenberg and Andrews-Crippa-Simon.    We gave one variable generalization of an identity of Andrews-Crippa-Simon (see Theorem \ref{limit in polynomial form}),  and a one variable generalization of an identity  of Bringmann and Jennings-Shaffer  (see Theorem \ref{limit formula for periodic sequence}), 
which is a corrected version of an identity of Gupta-Kumar.  
% We have shown that all these generalizations can be obtained from a general identity 

%Now we state a few  problems that we are not able to answer in this paper.  

Now we state a few problems that might be of interest to the readers of this article. 

\begin{enumerate}
\item Note that Corollary \ref{two variable gen_Uchimura} is a simultaneous generalization of Uchimura's  identity \eqref{Uchimura} and Dilcher's identity \eqref{dilcher 1}.  However it gives only the first equality.  Thus,  a natural question is that `what will be the {\it divisor-type} sum for Corollary \ref{two variable gen_Uchimura}?'

\item Utilizing \eqref{uchimura ideantity polynomial version},  Uchimura showed that,  if $\textup{Pr}(X=n)= q^n (q^{n+1})_\infty$,  then
\begin{align*}
E(X)= \sum_{n=1}^\infty d(n) q^n,  \textup{Var}(X)= \sum_{n=1}^\infty \sigma(n) q^n.
\end{align*} 
More generally,  he showed that,  for any $m \in \mathbb{N}$, 
\begin{align}\label{mth cum}
h_m= m\textrm{-th cumulant} = \sum_{n=1}^\infty \sigma_{m-1}(n) q^n.  
\end{align}
Utilizing \eqref{Uchimura},  Simon,  Crippa and Collenberg established that,  if $\text{Pr}(\gamma_{n}^{*}(1) = h )$ is defined as in \eqref{Prob},  then 
\begin{align*}
\lim_{n \rightarrow \infty} \left( n - E(\gamma_{n}^{*}(1)\right) = \sum_{n=1}^\infty d(n) q^n,
\end{align*}
Later,  employing \eqref{uchimura ideantity polynomial version}, Andrews,  Crippa and Simon proved that 
\begin{align*}
\lim_{n \rightarrow \infty} \text{Var}(\gamma_{n}^{*}(1)) = \sum_{n=1}^\infty \sigma(n) q^n. 
\end{align*}
Thus,  it would be an interesting problem to know the interpretation of the generalized divisor generating function \eqref{mth cum} with respect to the random variable $\gamma_{n}^{*}(1)$. 

%{\bf Question} What will be application of the generalizations of \eqref{uchimura ideantity polynomial version}?

\item As an application of Uchimura's identity \eqref{uchimura ideantity polynomial version},  Andrews,  Crippa and Simon proved the identity \eqref{andrews crippa simmos's result with limit}.  In a similar way,  as an application of Gupta-Kumar's identity \eqref{guptakumar polynomial},  we obtained Theorem \ref{limit in polynomial form}.  In this paper,  we obtained a one variable generalization of \eqref{guptakumar polynomial},  i.e.,  Theorem \ref{polynomial version generalization of gupta-kumar}.  Thus,  it would be an exciting problem to find an application of Theorem \ref{polynomial version generalization of gupta-kumar}, which will generalize \eqref{andrews crippa simmos's result with limit} as well as Theorem \ref{limit in polynomial form}.  

%{\it Finite Analogue- See at the end of Dilcher's paper and Andrews-Crippa-Simon}.  

\item  
%A finite analogue of the second equality of Uchimura's identity \eqref{Uchimura} given by Van Hamme \cite{hamme},  for any $N \in \mathbb{N}$,  
By combining a result of van Hamme \cite{hamme} and Guo and Zeng  \cite[Equation (3.8)]{guozeng2015},  we get a finite analogue  of Uchimura's identity \eqref{Uchimura},  namely, for any $N \in \mathbb{N}$,  
\begin{align}\label{finite analogue}
\sum_{n=1}^\infty n q^n (q^{n+1})_{N-1} - \sum_{n=1}^\infty n q^{n+N} (q^{n+1})_{N-1}= \sum_{n=1}^{N} \frac{(-1)^{n-1} q^{{n+1 \choose 2}}}{1- q^n} \left[\begin{matrix} N \\ n \end{matrix}\right] = \sum_{n=1}^{N} \frac{q^n}{1-q^n}. 
\end{align}
%Guo and Zeng \cite[Equation (3.8)]{guozeng2015} obtained the following finite analogue of the first expression of Uchimura's identity \eqref{Uchimura},  that is,  
%\begin{align}\label{Guo-Zeng}
%\sum_{n=1}^\infty n q^n (q^{n+1})_{N-1} - \sum_{n=1}^\infty n q^{n+N} (q^{n+1})_{N-1}
%\end{align}
In this paper,  we discussed various generalizations of Uchimura's identity \eqref{Uchimura}.  Thus,  it would be interesting to see finite analogues of these generalizations.  Readers are encouraged to see \cite{DEMS} to know more about finite analogues of some of the results motivated from the above identity \eqref{finite analogue}.  
\end{enumerate}

\section{Acknowledgement}
The first author wants to thank University Grant Commission (UGC), India,  for providing Ph.D.  scholarship.  The second author sincerely thanks the Department of Mathematics, Pt. Chiranji
Lal Sharma Government College, Karnal,  for a conducive research environment. 
The third author wants to thank BITS Pilani for providing the New Faculty Seed Grant NFSG/PIL/2024/P3797.  
The last author is thankful to the Science and Engineering Research Board (SERB),  India,  for giving the Core Research Grant CRG/2023/002122 and MATRICS Grant MTR/2022/000545.  The authors would also like to thank Dr.  Debopriya Mukherjee and Dr. Priyamvada for fruitful discussions in probability theory and graph theory during this project.

\end{document}